\tikzset{circlen/.code={\def\circlen@n{#1}\pgfkeysalso{shape=circlen@shape}}}
 \newcommand{\E}{\mathcal{E}}
\newcommand{\bu}{\bar{u}}
\definecolor{dark-red}{rgb}{0.5,0.15,0.15}
\definecolor{dark-blue}{rgb}{0.15,0.15,0.6}
\definecolor{dark-green}{rgb}{0.15,0.6,0.15}
\newcommand{\mE}{(E_n^{\vee})_*}
\newcommand*{\backref}[1]{}
\newcommand*{\backrefalt}[4]{%
  \ifcase #1 %
No citations.
  \or
(cit. on p. #2).%
  \else
(cit on pp. #2).%
  \fi%
}
\newtheorem{thm}{Theorem}[section]
\newtheorem{cor}[thm]{Corollary}
\newtheorem{prop}[thm]{Proposition}
\newtheorem{lem}[thm]{Lemma}
\newtheorem*{thm*}{Theorem}
\theoremstyle{definition}
\newtheorem{defn}[thm]{Definition}
\newtheorem{ex}[thm]{Example}
\theoremstyle{remark}
\newtheorem{rem}[thm]{Remark}
\let\c@equation\c@thm
\numberwithin{equation}{section}
\DeclareMathOperator{\Aut}{Aut}
\DeclareMathOperator{\Sp}{Sp}
\DeclareMathOperator{\Hom}{Hom}
\DeclareMathOperator{\colim}{colim}
\DeclareMathOperator{\cC}{\mathcal{C}}
\DeclareMathOperator{\cE}{\mathcal{E}}
\DeclareMathOperator{\Fun}{Fun}
\renewcommand{\Q}{\mathbb{Q}}
\DeclareMathOperator{\id}{id}
\DeclareMathOperator{\Pic}{Pic}
\DeclareMathOperator{\unit}{\mathbbm{1}}
\DeclareMathOperator{\Gal}{Gal}
\newcommand{\G}{\mathbb{G}}
\renewcommand{\cal}{\mathcal}
\newcommand{\xr}{\xrightarrow}
\renewcommand{\R}{\mathbb{R}}
\renewcommand{\Z}{\mathbb{Z}}
\Crefname{figure}{Figure}{Figures}
\Crefname{assu}{Assumption}{Assumptions}
\Crefname{lem}{Lemma}{Lemmas}
\Crefname{thm}{Theorem}{Theorems}
\Crefname{prop}{Proposition}{Propositions}
\newcommand{\recollement}[5]{
\xymatrix{{#1} \ar[r]|-{#2} & #3 \ar[r]|-{#4} \ar@<1ex>[l]^-{{#2}_!} \ar@<-1ex>[l]_-{{#2}^*} & #5, \ar@<1ex>[l]^-{{#4}!} \ar@<-1ex>[l]_-{{#4}^*}
}}
\let\lim\relax
\DeclareMathOperator{\lim}{lim}
\newcommand{\F}{\mathbb{F}}
\DeclareMathOperator{\HFPSS}{\text{HFPSS}}
\DeclareMathOperator{\map}{map}
\DeclareMathOperator{\coker}{coker}
\DeclareMathOperator{\kernel}{ker}
\Crefname{fig}{Figure}{Figures}
\newcommand{\pics}{\mathfrak{pic}}
\title[Picard groups and duality for Real Morava $E$-theories]{Picard groups and duality for Real Morava $E$-theories}
\author{Drew Heard}
\address{Department of Mathematical Sciences, Norwegian University of Science and Technology, Trondheim}
\email{drew.k.heard@ntnu.no}
\urladdr{https://folk.ntnu.no/drewkh/}
\author{Guchuan Li}
\address{Department of Mathematics, University of Michigan, Ann Arbor, MI 48109}
\email{guchuan@umich.edu}
\author{XiaoLin Danny Shi}
\address{Department of Mathematics, University of Chicago, Chicago IL 60637}
\email{dannyshixl@gmail.com}
\date{\today}
\begin{document}

\begin{abstract}
We show, at the prime 2, that the Picard group of invertible modules over $E_n^{hC_2}$ is cyclic.  Here, $E_n$ is the height $n$ Lubin--Tate spectrum and its $C_2$-action is induced from the formal inverse of its associated formal group law.  We further show that $E_n^{hC_2}$ is Gross--Hopkins self-dual and determine the exact shift. Our results generalize the well-known results when $n = 1$. 
\end{abstract}
\maketitle

\setcounter{tocdepth}{1}

\section{Introduction}
Let $E_n$ be the height-$n$ Lubin--Tate spectrum with coefficient ring 
\[
\pi_*(E_n) \cong \mathbb{W}(\F_{p^n})[\![u_1,\ldots,u_{n-1}]\!][u^{\pm 1}]. 
\]
This spectrum is constructed by using the theory of formal groups (see \Cref{sec:chromintro} for more details).  For this paper, we will focus on the case when the prime $p$ is $2$.  At $p = 2$, the spectrum $E_n$ has a $C_2$-action that is induced from the formal inverse of its associated formal group law via the Goerss--Hopkins--Miller theorem \cite{goerss_hopkins,Rezk}.  Our main object of interest in this work will be the homotopy fixed point spectrum $E_n^{hC_2}$. 

When $n = 1$, there is an equivalence between $E_1^{hC_2}$ and $KO^{\wedge}_2$, the $2$-completed real $K$-theory.  The real Bott periodicity implies that $E_1^{hC_2}$ is 8-periodic.  In \cite{ms_pic}, Mathew and Stojanoska have shown that the Picard group $\Pic(E_1^{hC_2})$ of invertible $E_1^{hC_2}$-modules is generated by the suspension $\Sigma E_1^{hC_2}$ and is isomorphic to $\Z/8$. 

Using $C_2$-equivariant homotopy theory, Hahn and Shi \cite{hahn_shi} have recently computed the homotopy fixed point spectral sequence of $E_n^{hC_2}$ for all heights $n \geq 1$.  In particular, they show that the spectrum $E_n^{hC_2}$ is $2^{n+2}$-periodic.  

In this paper, we generalize the Picard group result in \cite{ms_pic} to all higher heights. This is the first systematic result that applies for all heights. 
\theoremstyle{plain}
\newtheorem*{thm:picardcalc}{\Cref{thm:picardcalc}}
\begin{thm:picardcalc}
For $n \geq 1$, the Picard group $\Pic(E_n^{hC_2}) \cong \Z/2^{n+2}$ and is generated by $\Sigma E_n^{hC_2}$. 
\end{thm:picardcalc}

In \cite{hopkins_gross}, Gross and Hopkins investigated a certain type of duality in the $K(n)$-local category that is analogous to the Grothendieck--Serre duality in algebraic geometry.  Namely, for a $K(n)$-local spectrum $X$, define 
$$IX = F(M_nX,I_{\Q/\Z}).$$ 
Here, $M_nX$ denotes the $n$-th monochromatic layer of $X$ and $I_{\Q/\Z}$ is the Brown--Comenetz dualizing spectrum.  When $n = 1$, it is well-known that $IE_1^{hC_2} \simeq \Sigma^5E_1^{hC_2}$ (see \cite[Sec.~9]{anderson_ko} for example).  Using \Cref{thm:picardcalc}, we show that more generally, $IE_n^{hC_2}$ is always self-dual up to a shift.  More specifically, we prove that $IE_n^{hC_2} \simeq \Sigma^{4+n}E_n^{hC_2}$.  In fact, this equivalence can be refined to a $C_2$-equivariant equivalence: 

\newtheorem*{thm:grosshopkinsequiv}{\Cref{thm:grosshopkinsequiv}}
\begin{thm:grosshopkinsequiv}
  The Gross--Hopkins dual $IE_n$ is $C_2$-equivariantly equivalent to $\Sigma^{4+n}E_n$. 
\end{thm:grosshopkinsequiv}
In the process of proving Theorem~\Cref{thm:grosshopkinsequiv}, we compute the Morava module of $IE_n^{hF}$ for any finite subgroup $F$ of the Morava stabilizer group.  Our computation is valid for all primes $p$.  Although we do not use this in our main computations, we include this as it may be of independent interest to the readers. 

Finally, in \Cref{sec:exoticpic}, we show that our computations, when combined with work of Beaudry--Goerss--Hopkins--Stojanoska \cite{bghs} and Barthel--Beaudry--Goerss--Stojanoska \cite{bbgs}, imply that the exotic part of the $K(n)$-local Picard group is always non-trivial when $p=2$, see \Cref{prop:exotic}, and that the group has an element whose order has a lower bound that grows exponentially with respect to $n$.


\subsection*{Outline of proof} We now provide an outline of proof for our main theorems.  To compute the Picard group, we use the techniques that are developed by Mathew and Stojanoska in \cite{ms_pic}.  More specifically, there is a descent spectral sequence that is associated to the Galois extension $E_n^{hC_2} \to E_n$.  Using this spectral sequence, we compute the homotopy groups of a certain Picard spectrum $\pics(E_n^{hC_2})$, which has the property that $\pi_0\pics(E_n^{hC_2}) \cong \Pic(E_n^{hC_2})$.  In a certain range, the Picard spectral sequence concides with the usual homotopy fixed point spectral sequence for $E_n^{hC_2}$.  Moreover, Mathew and Stojanoska have shown that in a more restricted range, differentials can be imported from the homotopy fixed point spectral sequence to the Picard spectral sequence.

In the additive spectral sequence, there are $n$ non-trivial differentials.  Using formulas in \cite{ms_pic}, we see that each of these differentials has kernel $\Z/2$ in the 0-stem of the Picard spectral sequence.  This, together with some low dimensional calculations, shows that the Picard group has order at most $2^{n+2}$.  However, we also know that this is the lower bound by periodicity.  Therefore, the Picard group must have order exactly $2^{n+2}$. 

We will then give two proofs for the Gross--Hopkins dual $IE_n^{hC_2}$.  The first proof relies extensively on techniques from $C_2$-equivariant homotopy theory, as well as a general result, which states that $I(E_n^{hF}) \in \Pic(E_n^{hF})$ for any finite subgroup of the Morava stabilizer group (\Cref{prop:grosshopkins_pic}).  The second proof is more computational in nature.  We set up a series of spectral sequences and completely compute the homotopy fixed point spectral sequence for $IE_n^{hC_2}$.  This computation uses a technique called the ``geometric boundary theorem'', which has been studied by Behrens in \Cref{app:gbt}. 

\subsection*{Acknowledgments}
The authors would like to thank Tobias Barthel, Agnes Beaudry, Mark Behrens, Irina Bobkova, Sanath Devalapurkar, Paul Goerss, Jeremy Hahn, Achim Krause, Lennart Meier, Tomer Schlank, Vesna Stojanoska, Guozhen Wang, Craig Westerland, and Zhouli Xu for helpful conversations.  The first author would like to thank Haifa University and Regensburg University for their hospitality.  This research was partially supported by the DFG grant SFB 1085 "Higher invariants".  All the spectral sequence pictures in this paper are created using Hood Chatham's spectral sequence package. Finally, we thank the referee for many helpful comments and suggestions. 
\section{Background}
We begin with the necessary background on Lubin--Tate spectra, Gross--Hopkins duality, and $C_2$-equivariant homotopy theory. 
\subsection{Lubin--Tate spectra and chromatic homotopy}\label{sec:chromintro}
We recall (e.g., from \cite{rezk_notes}) that given a perfect field $k$ of characteristic $p > 0$ and a formal group $F$ over $k$ of height $n$, then there exists a Lubin--Tate spectrum $E(k,F)$ with formal group the universal deformation of $F$. The coefficients of $E(k,F)$ are given by 
\[
\pi_*E(k,F) \cong \mathbb{W}(k)[\![u_1,\ldots,u_{n-1}]\!][u^{\pm 1}]
\]
where $|u_i| = 0$ and $|u| = 2$, and $\mathbb{W}(k)$ denotes the Witt vectors of the field $k$. Note that this is a complete local ring with maximal ideal $I_n = (p,u_1,\ldots,u_{n-1})$. For concreteness, we always work with $k = \F_{p^n}$ and $F$ the Honda formal group law with $p$-series $[p](x) = x^{p^n}$. We denote the associated spectrum by $E_n$.  Note that our results are easily generalized to other forms of Lubin--Tate spectra as well because any $E(k, F)$ admits a Real-orientation by \cite{hahn_shi}.  As a result, its $C_2$-homotopy fixed point spectral sequence is always regular in the sense of \cite[Definition~6.1]{MeierRegularHFPSS}.

Let $\mathbb{S}_n$ denote the group of automorphisms of the Honda group law over $\F_{p^n}$, and let $\mathbb{G}_n = \mathbb{S}_n\rtimes \Gal(\F_{p^n}/\F_p)$. Lubin--Tate theory implies that $\G_n$ acts on $(E_n)_*$ and the Goerss--Hopkins--Miller theorem improves this to show that $\G_n$ acts on the spectrum $E_n$ via $E_{\infty}$-ring maps \cite{goerss_hopkins}. In this paper we are particularly interested in the $C_2$-action on $E_n$ when $p = 2$, arising from the formal inverse of the Honda formal group law, and the associated homotopy fixed point spectrum $E_n^{hC_2}$. Here the $C_2$-action on $(E_n)_*$ is particularly simple; if we let $\tau$ denote the generator of $C_2$, then $\tau_*(u_i) = u_i$ and $\tau_*(u^k) = (-1)^ku^k$ (see for example the proof of \cite[Lemma~1.33]{bobkova_goerss}). As noted in the introduction, when $n = 1$ there is an equivalence between $E_1^{hC_2}$ and the $2$-complete real $K$-theory. 

Let $L_n$ denote the Bousfield localization functor with respect to $E_n$. The Bousfield class $\langle E_n \rangle$ admits a decomposition 
\[
\langle E_n \rangle = \langle K(0) \vee K(1) \vee \cdots \vee K(n) \rangle 
\]
where $K(n)$ denotes the $n$-th Morava $K$-theory \cite{hs_99}. Given a $K(n)$-local spectrum $X$, the natural version of homology to define is 
\[
\mE X = \pi_*L_{K(n)}(E_n \otimes X),
\]
This is not always $I_n$-adically complete as an $(E_n)_*$-module (although see \cite[Sec.~2]{ghmr} for conditions that ensure this), but it is always $L$-complete, see \cite[Prop.~8.4]{hs_99}, where $L$-completion denotes the zero-th derived functor of $I_n$-adic completion. It also obtains an action of $\G_n$ via the action on $(E_n)_*$, which is twisted, in the sense that if $g \in \G_n$, $a \in (E_n)_*$ and $x \in \mE X$, then 
\[
g(ax) = g(a)g(x). 
\]
We call any such $L$-complete $(E_n)_*$-module with this twisted $\G_n$-action a \emph{Morava module}, see \cite[Sec.~2]{ghmr}. 

If $M$ is a Morava module, we let $\map_{cts}(\G_n,M)$ denote the group of continuous maps from $\G_n \to M$, with $(E_n)_*$-action given by 
\[
(a\phi)(x) = a\phi(x)
\]
for $a \in (E_n)_*$ and $\phi \colon \mathbb{G}_n \to M$. If we give $\map_{cts}(\G_n,M)$ the diagonal $\G_n$-action defined by 
\[
(g\phi)(x) = g\phi(g^{-1}x)
\]
then this obtains the structure of a Morava module.  For example, there is an isomorphism $\mE E_n \cong \map_{cts}(\G_n,(E_n)_*)$ of Morava modules, see \cite[Thm.~2]{dh_04}. In fact, more generally, we have $\mE (E_n^{hF}) \cong \map_{cts}(\G_n/F,(E_n)_*)$, see \cite[Prop.~6.3]{dh_04}. 
\begin{rem}
  If $M$ is an $(E_n)_*$-module with a compatible action of a finite subgroup $F \subset \G_n$, we say that $M$ is an $(E_n)_*[F]$-module. For consistency, we could also call Morava modules $(E_n)_*[\![\G_n]\!]$-modules, but the terminology Morava module seems well-established by now, and so we stick with that.  
\end{rem}

\subsection{Gross--Hopkins duality}
Let $I_{\Q/\Z}$ denote the Brown--Comenetz spectrum \cite{bc_dual}, and let $I_{\Q/\Z}X = F(X,I_{\Q/\Z})$ denote the Brown--Comenetz dual of a spectrum $X$, characterized by $\pi_*I_{\Q/\Z}X \cong \Hom(\pi_{-*}X,\Q/\Z)$. For a $K(n)$-local spectrum $X$, the Gross--Hopkins dual of $X$, denoted $IX$, is defined as $IX = F(M_nX,I_{\Q/\Z})$, i.e., as the Brown--Comenetz dual of $M_nX$, where $M_nX$ is the fiber of the natural map $L_nX \to L_{n-1}X$.  We write $I = IS^0$; it follows that $IX = F(X,I)$. Following Gross and Hopkins, Strickland \cite{strickland_gross} has studied the properties of $I$ in some detail.  In order to state what we need, we recall that there is a determinant map 
\[
\det \colon \G_n \to \Z_p^{\times},
\]
see \cite[Sec.~1.3]{ghmr}. If $M$ is a Morava module, then we write $M \langle \det \rangle$ for the same $(E_n)_*$-module with the $\G_n$-action twisted by $\det$. With this is mind, we have the following \cite[Thm.~2]{strickland_gross}, which relies extensively on the work of Gross and Hopkins \cite{hopkins_gross}.  
\begin{thm}[Strickland]\label{thm:moravabc}
  There is an isomorphism of Morava modules 
  \[
(E^{\vee}_n)_* I \cong \Sigma^{n^2-n} (E_n)_* \langle \det \rangle. 
  \]
\end{thm}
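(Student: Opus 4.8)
The plan is to compute $\mE I = \pi_* L_{K(n)}(E_n \otimes I)$ by reducing the Brown--Comenetz duality that defines $I$ to Matlis (continuous Pontryagin) duality at the level of $E_n$-homology, and then to analyze the resulting one-dimensional dualizing module using local duality for the regular local ring $(E_*, I_n)$ together with the Gross--Hopkins period map. The shift $n^2-n$ and the character $\det$ will emerge as the internal grading and the $\G_n$-action on this dualizing module, respectively.

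First I would set up the reduction. Since $I = F(M_n S^0, I_{\Q/\Z})$ and $\mE(-) = \pi_* L_{K(n)}(E_n \otimes -)$, the task is to identify $L_{K(n)}(E_n \otimes I)$ with an internal Brown--Comenetz dual in the category of $K(n)$-local $E_n$-modules. Concretely, smashing with $E_n$ and $K(n)$-localizing should turn the Brown--Comenetz dual $I_{\Q/\Z}(M_n S^0)$ into the continuous Pontryagin dual of $\pi_* M_n E_n$, where $M_n E_n = \mathrm{fib}(E_n \to L_{n-1} E_n)$ is the monochromatic layer of $E_n$ itself. This works because $\pi_* M_n E_n$ is a torsion $E_*$-module, assembled from generalized Moore spectra of type $n$, so Pontryagin duality and the $L$-complete structure interact well; one cannot simply move $E_n$ across $F(-,I_{\Q/\Z})$, since $E_n$ is not $K(n)$-locally dualizable, so this passage must be argued through the torsion structure rather than a naive duality isomorphism.

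The second step is the algebraic computation of $\pi_* M_n E_n$. Since $I_n = (p, u_1, \ldots, u_{n-1})$ is a regular sequence of length $n$ in the regular local ring $E_*$, the monochromatic layer is governed by local cohomology: $\pi_* M_n E_n$ is a shift of the top local cohomology $H^n_{I_n}(E_*)$, the unique nonvanishing local cohomology module, realized as the colimit $\varinjlim E_*/(p^{i_0}, \ldots, u_{n-1}^{i_{n-1}})$. This is an Artinian, injective $E_*$-module, essentially the injective hull of the residue field. Applying Grothendieck local duality for the Gorenstein ring $E_*$, the Matlis dual of $H^n_{I_n}(E_*)$ is $E_*$ tensored with the rank-one dualizing module $\omega = \bigwedge^n(I_n/I_n^2)$. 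Combined with the first step, this already yields $\mE I \cong \Sigma^{s} E_* \langle \chi \rangle$ for some shift $s$ and some character $\chi \colon \G_n \to E_*^{\times}$ recording the $\G_n$-action on $\omega$.

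Finally, I would pin down $s$ and $\chi$. The dualizing module $\omega$ is one-dimensional and carries both an internal grading, via the $\G_m$-action recording the degree-$2$ periodicity class $u$, and a $\G_n$-action. The net internal shift $s = n^2 - n$ is extracted by tracking these gradings through the computation; structurally it equals $2(0 + 1 + \cdots + (n-1))$, reflecting the weights $0, 1, \ldots, n-1$ occurring in the graded dualizing module. Identifying $\chi$ with $\det \colon \G_n \to \Z_p^{\times}$ amounts to the observation that $\G_n \hookrightarrow \mathrm{GL}_n$ acts on the top exterior power $\omega = \bigwedge^n(I_n/I_n^2)$ through the determinant of its action on the $n$-dimensional cotangent space $I_n/I_n^2$, together with the Gross--Hopkins period map, which identifies this geometric determinant with the arithmetic character $\det$. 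I expect this last identification to be the main obstacle: showing that $\G_n$ acts on the one-dimensional dualizing module precisely through $\det$, and not some other power or variant, requires the Gross--Hopkins analysis of the period map and the action on invariant differentials, which is the genuine geometric input; by comparison, the local-duality computation of the underlying module and the extraction of $n^2 - n$ from the grading are essentially formal.
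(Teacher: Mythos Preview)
The paper does not prove this statement at all: it is quoted as a theorem of Strickland, with the citation \cite[Thm.~2]{strickland_gross} and the remark that it ``relies extensively on the work of Gross and Hopkins.'' There is therefore no proof in the paper to compare against; the result is used as a black box.

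That said, your outline is a fair sketch of what Strickland actually does: reduce to the Matlis/Pontryagin dual of $\pi_*M_nE_n$, identify this via local cohomology of the regular local ring $(E_*,I_n)$ and Grothendieck local duality, and then invoke the Gross--Hopkins analysis of the dualizing line to read off the shift $n^2-n$ and the character $\det$. You are also right that the last step is where the real content lies. One correction, however: your stated obstruction in the first step is misidentified. The spectrum $E_n$ \emph{is} $K(n)$-locally dualizable --- indeed $DE_n \simeq \Sigma^{-n^2}E_n$, and the paper itself uses this in the proof of \Cref{lem:moravamoduleen}. The genuine subtlety is rather that $I_{\Q/\Z}$ is not $K(n)$-local, so one cannot freely commute $E_n\otimes(-)$ with $F(-,I_{\Q/\Z})$ before localizing; Strickland handles this through the torsion structure of $M_nE_n$, much as you indicate.
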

This implies that $I$ is invertible in the $K(n)$-local category. In fact, when $p \gg n$, this implies that $I \simeq\Sigma^{n^2-n} S\langle \det \rangle$, where $S \langle \det \rangle$ is the determinantal sphere, a $K(n)$-locally invertible spectrum with $\mE S \langle \det \rangle \cong (E_n)_*\langle \det \rangle$. We return to this connection more in \Cref{sec:exoticpic}. 
\begin{rem}\label{rem:bracketing}
  Note that for any finite subgroup $F \subset \mathbb{G}_n$ there is an equivalence $I(E_n^{hF}) \simeq (IE_n)^{hF}$. Indeed, since the Tate spectrum $E_n^{tF}$ vanishes $K(n)$-locally, see \cite{gs_ate}, and because $I$ is $K(n)$-local, we obtain the following chain of equivalences: 
$$I(E_n^{hF}) \simeq F(E_n^{hF},I) \simeq F((E_n)_{hF},I) \simeq F(E_n, I)^{hF} \simeq (IE_n)^{hF}.$$

Therefore, there is no ambiguity if we simply write $IE_n^{hF}$.
\end{rem}

 We will also need to know that there is an isomorphism
\begin{equation}\label{eq:detinen}
\pi_*IE_n \cong \Sigma^{-n}(E_n)_*\langle \det \rangle,
\end{equation}
which is proved by Strickland \cite{strickland_gross}.

In \cite[Prop.~2.1]{bbs_gross} the authors show that the homotopy fixed point spectral sequence computing $\pi_*(I_{\Q/\Z}X)^{hH}$ is Pontryagin dual to the homotopy orbits spectral sequence computing $\pi_*(X_{hH})$. We restate this here, where $D_{\Q/\Z}(-)$ denotes the Pontryagin dual of an abelian group.  
\begin{lem}[Barthel--Beaudry--Stojanoska]\label{lem:qzdual}
Let $G$ be a finite group and $X$ a $G$-spectrum.  Then the homotopy fixed point spectral sequence of $I_{\Q/\Z}X$ is Pontryagin dual to the homotopy orbit spectral sequence of $X$.  More precisely, let $\E_r^{*,*}$ be the $r^{th}$ page of the HFPSS$((I_{\Q/Z}X)^{hG})$ with differentials 
$$d_r \colon \E_r^{s,t} \rightarrow \E_r^{s+r,t+r-1},$$
and $\E^r_{*,*}$ be the $r^{th}$ page of HOSS$(X_{hG})$ with differentials 
$$\overline{d_r} \colon \E^r_{s,t} \rightarrow \E^r_{s-r,t+r-1}.$$ 
Then there is a natural isomorphism between each page and between all the differentials:
$$\E_r^{s,t} \cong D_{\Q/Z}(\E^r_{s,-t}), \quad D_{\Q/\Z}(\overline{d_r})=d_r.$$     
\end{lem}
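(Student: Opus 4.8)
The plan is to realize both spectral sequences as the homotopy exact couples of a single filtered spectrum, linked by the functor $F(-, I_{\Q/\Z})$, which is contravariant and exact on homotopy groups. First I would fix a free $G$-CW structure on $EG$ and consider the skeletal filtration $\mathrm{sk}_0 EG_+ \subset \mathrm{sk}_1 EG_+ \subset \cdots$ with $\mathrm{sk}_\infty EG_+ = EG_+$. Smashing with $X$ (diagonal action) and passing to homotopy orbits produces a filtered spectrum $\{(\mathrm{sk}_s EG_+ \otimes X)_{hG}\}_{s \ge 0}$ with colimit $X_{hG}$, whose associated homotopy exact couple is, by definition, the homotopy orbit spectral sequence, with $\E^1_{s,t} = C_s(G; \pi_t X)$ the cellular chains, $\E^2_{s,t} = H_s(G; \pi_t X)$, and differentials $d^r \colon \E^r_{s,t} \to \E^r_{s-r, t+r-1}$.

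Next I would apply $F(-, I_{\Q/\Z})$ levelwise to this filtered spectrum, producing a tower with inverse limit $F(X_{hG}, I_{\Q/\Z}) = (I_{\Q/\Z}X)^{hG}$. The key step is to identify this with the standard skeletal tower computing the homotopy fixed point spectral sequence of $I_{\Q/\Z}X$: using the adjunction $F(Y_{hG}, I_{\Q/\Z}) \simeq F(Y, I_{\Q/\Z})^{hG}$ together with $F(\mathrm{sk}_s EG_+ \otimes X, I_{\Q/\Z}) \simeq F(\mathrm{sk}_s EG_+, I_{\Q/\Z}X)$ as $G$-spectra, the $s$-th term $F((\mathrm{sk}_s EG_+ \otimes X)_{hG}, I_{\Q/\Z})$ becomes $F(\mathrm{sk}_s EG_+, I_{\Q/\Z}X)^{hG}$, which is exactly the $s$-th stage of the tower whose exact couple is the homotopy fixed point spectral sequence of $I_{\Q/\Z}X$. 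Hence this HFPSS would be obtained, as a filtered object, by applying $F(-, I_{\Q/\Z})$ to the filtered object defining the HOSS of $X$.

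Then I would pass to homotopy groups. Since $\pi_* F(Y, I_{\Q/\Z}) \cong \Hom(\pi_{-*}Y, \Q/\Z)$ and $\Q/\Z$ is injective as an abelian group, the Pontryagin dual functor $D = \Hom(-, \Q/\Z)$ is exact, so it carries the exact couple of the HOSS term by term onto that of the HFPSS, reversing the homological (filtration) degree, negating the internal degree, and sending the chain complex $C_\bullet(G; \pi_{-t}X)$ to the cochain complex $C^\bullet(G; D(\pi_{-t}X))$. Here I would use that $G$ is finite: each $C_s(G;-)$ involves only finitely many free $\Z[G]$-summands, so $D$ converts the relevant direct sums into products and the $E^1$-identification is an honest isomorphism rather than a mere comparison map. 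A natural isomorphism of exact couples induces an isomorphism of spectral sequences from $E^1$ onward, and matching the conventions $d^r\colon \E^r_{s,t}\to \E^r_{s-r,t+r-1}$ and $d_r\colon \E_r^{s,t}\to \E_r^{s+r,t+r-1}$ would yield the asserted natural isomorphism $\E_r^{s,t}\cong D(\E^r_{s,-t})$, under which $D(d_r) = d^r$.

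The hardest part will be the bookkeeping: keeping the filtration-degree reversal, the internal-degree sign change, and the two opposite differential directions mutually consistent, and verifying that the identification of towers in the second step is carried out $G$-equivariantly and compatibly with the filtrations, not merely on underlying spectra. Convergence plays no role in the statement, which concerns only pages and differentials; but I would emphasize that finiteness of $G$ is precisely what upgrades the $E^1$-comparison map — and therefore everything built above it — to an isomorphism.
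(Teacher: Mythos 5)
Your argument is correct. The paper itself gives no proof of this lemma---it is quoted from Barthel--Beaudry--Stojanoska \cite[Prop.~2.1]{bbs_gross}---and your proof (dualize the skeletal filtration of $EG_+\otimes X$ term by term, identify the resulting tower with the standard one computing the homotopy fixed point spectral sequence of $(I_{\Q/\Z}X)^{hG}$ via the adjunction $F(Y_{hG},I_{\Q/\Z})\simeq F(Y,I_{\Q/\Z})^{hG}$, and use exactness of $D=\Hom(-,\Q/\Z)$ together with finiteness of $G$ to dualize the exact couple page by page) is essentially the argument given in that reference, including the correct observation that only the $E^1$-identification, and not convergence, requires care.
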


\subsection{The monochromatic layer of the sphere and Morava \texorpdfstring{$E$}{E
}-theory}\label{subsection:mono}
Recall that the Gross--Hopkins dual of Morava $E$-theory is the Brown--Comenetz dual of $M_nE_n$. In this section, we give an alternative description of the monochromatic layer of $E$-theory.

To begin, let $I_k$ be the ideal $(p,v_1,\ldots,v_{k-1})$ in $(E_n)_*$, where 
\[
v_k = 
\begin{cases}
  p & k = 0 \\
  u_ku^{1-p^k} & 1 \le k < n \\
  u^{1-p^n} & k=n. 
\end{cases}
\]
We make the convention that $I_0$ is the zero ideal, so that $E_n/I_0 \simeq E_n$. 

We recall that we can inductively define spectra $E_n/I_k^{\infty}$ by cofiber sequences
\begin{equation}\label{eq:cofibermnen}
E_n/I_k^{\infty} \to v_k^{-1}E_n/I_k^{\infty} \to E_n/I_{k+1}^{\infty}
\end{equation}
Applying $M_n$ to these cofiber sequences, and noting that $M_n(v_k^{-1}E_n/I_k^{\infty}) \simeq \ast$ is trivial for $k<n$, and that $M_n(E_n/I_n^{\infty}) \simeq E/I_n^{\infty}$, we obtain an equivalence $\Sigma^{-n}E_n/I_n^{\infty} \xr{\simeq} M_nE_n$. The main goal of this section is to prove that this is an equivariant equivalence. 
\begin{prop}\label{prop:equivmonoequiv}
  There is an equivalence 
  \[
\Sigma^{-n}E_n/I_n^{\infty}\xr{\simeq} M_nE_n 
  \]
  compatible with the $\mathbb{G}_n$-action on both sides. 
\end{prop}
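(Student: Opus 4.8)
The plan is to observe that the non-equivariant equivalence $\Sigma^{-n}E_n/I_n^\infty\xr{\simeq}M_nE_n$ is produced entirely by applying $M_n$ to the cofiber sequences \eqref{eq:cofibermnen} and invoking the two vanishing facts $M_n(v_k^{-1}E_n/I_k^\infty)\simeq\ast$ for $k<n$ and $M_n(E_n/I_n^\infty)\simeq E_n/I_n^\infty$. The functor $M_n$ is a functor of spectra, and $\mathbb{G}_n$ acts on the spectrum $E_n$ through $E_\infty$-ring maps; the two vanishing facts are statements about individual objects and hence are insensitive to any group action. Therefore it suffices to build the tower $\{E_n/I_k^\infty\}_{0\le k\le n}$ together with the maps of \eqref{eq:cofibermnen} in the world of $E_n$-modules equipped with a compatible (continuous) $\mathbb{G}_n$-action: applying the (then automatically $\mathbb{G}_n$-equivariant) functor $M_n$ and using the vanishing facts will give the equivalence $\mathbb{G}_n$-equivariantly.

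I would construct the tower by induction on $k$, starting from $E_n/I_0^\infty=E_n$ with its given $\mathbb{G}_n$-action. The crucial algebraic input is the classical fact, coming from Lubin--Tate theory and the structure of the Morava stabilizer group, that each $I_k=(p,v_1,\dots,v_{k-1})$ is a $\mathbb{G}_n$-invariant ideal of $E_*$. Hence $I_{k+1}/I_k$ is a $\mathbb{G}_n$-invariant principal ideal of the domain $E_*/I_k$, generated by the image of $v_k$, so that $g(v_k)=\chi_k(g)\,v_k$ in $E_*/I_k$ for a unit $\chi_k(g)$; these units satisfy the obvious cocycle identity, defining a character twist which in the case of interest, $\mathbb{G}_n=C_2$ at $p=2$, is simply the sign character since $\tau(v_k)=-v_k$. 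Consequently, multiplication by $v_k$ refines to a $\mathbb{G}_n$-equivariant map $\Sigma^{|v_k|}(E_n/I_k^\infty)\langle\chi_k\rangle\to E_n/I_k^\infty$, where $\langle\chi_k\rangle$ denotes the twist of the action by $\chi_k$; forming the telescope of its iterates yields an equivariant model for $v_k^{-1}E_n/I_k^\infty$, and its cofiber is an equivariant model for $E_n/I_{k+1}^\infty$. By construction the maps of \eqref{eq:cofibermnen} are then maps of $\mathbb{G}_n$-objects. (Alternatively, and more conceptually, one can identify $E_n/I_n^\infty$ — up to the shift appearing in the statement — with the $I_n$-local cohomology object $\Gamma_{I_n}E_n$, and use that on $E_n$-modules the torsion functor $\Gamma_{I_n}$ coincides with $M_n$; since $\Gamma_{I_n}$ depends only on the $\mathbb{G}_n$-invariant ideal $I_n$, it is a $\mathbb{G}_n$-equivariant endofunctor of $\Mod_{E_n}$, which directly endows $\Gamma_{I_n}E_n\simeq M_nE_n$ with the desired compatibility.)

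I expect the main obstacle to be bookkeeping rather than anything conceptual: one must check that the character twists $\langle\chi_k\rangle$ fit together coherently as $k$ varies so that the equivariant structures genuinely assemble into the sequence \eqref{eq:cofibermnen}, and — if one follows the local-cohomology route — that the standard identification $M_n\simeq\Gamma_{I_n}$ on $E_n$-modules is itself $\mathbb{G}_n$-equivariant; one also has the usual, ultimately harmless, care about continuity of the profinite $\mathbb{G}_n$-action. For the application all of this is immediate, since only finite subgroups $F\subset\mathbb{G}_n$ — and in fact only $C_2$ at the prime $2$, where every relevant $\chi_k$ is the sign character — are actually used.
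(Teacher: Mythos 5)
Your overall strategy is genuinely different from the paper's, and its main branch has a gap. The paper never tries to make the tower of $E_n$-modules $\{E_n/I_k^\infty\}$ equivariant directly: it builds the corresponding tower $\{S^0/I_k^\infty\}$ out of the $p$-local sphere, where there is no group action to worry about, produces the non-equivariant equivalence $t\colon \Sigma^{-n}M_n(S^0/I_n^\infty)\to M_nS^0$ there, and then smashes with $E_n$, declaring the $\mathbb{G}_n$-action to live entirely on the $E_n$ tensor factor. Every map in sight is then of the form $(\text{map of spectra})\otimes \mathrm{id}_{E_n}$ and hence tautologically equivariant; the only real work is the identification $E_n\otimes S^0/I_n^\infty\simeq E_n/I_n^\infty$, which is done by showing that the connecting homomorphisms in $\pi_*(E_n\otimes -)$ of the telescope tower vanish (each is a map from a $v_k$-torsion module to a $v_k$-torsion-free one). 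This completely sidesteps the coherence questions you are trying to solve.

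The gap in your primary construction: for a general $g\in\mathbb{G}_n$ the relation $g(v_k)=\chi_k(g)v_k$ holds only modulo $I_k$, i.e.\ $g(v_k)=\chi_k(g)v_k+\epsilon$ with $\epsilon\in I_k$, and $\epsilon$ does \emph{not} act as zero on $E_n/I_k^\infty$: its homotopy $E_*/(p^\infty,v_1^\infty,\dots,v_{k-1}^\infty)$ is $I_k$-power-torsion, but individual elements of $I_k$ (e.g.\ $p$ on $\mathbb{Z}/p^\infty$) act nontrivially on it. So multiplication by $v_k$ is not intertwined with the group action via the character $\chi_k$, the map $\Sigma^{|v_k|}(E_n/I_k^\infty)\langle\chi_k\rangle\to E_n/I_k^\infty$ is not equivariant, and the telescope does not obviously inherit a $\mathbb{G}_n$-action. (Your construction does work verbatim for the $C_2$-action at $p=2$ in these coordinates, where $\tau(v_k)=-v_k$ on the nose --- or one can simply invert the invariant element $v_k^2$ --- but the proposition asserts $\mathbb{G}_n$-equivariance.) The correct repair is exactly your parenthetical alternative: work with the invariant \emph{ideals} rather than the non-invariant elements, i.e.\ identify $\Sigma^{-k}E_n/I_k^\infty$ with the local cohomology object $\Gamma_{I_k}E_n$ and $v_k^{-1}E_n/I_k^\infty$ with the localization of $\Gamma_{I_k}E_n$ away from $I_{k+1}$ (these agree because on $I_k$-torsion objects, $(v_k)$-torsion and $I_{k+1}$-torsion coincide); since torsion and localization functors depend only on the radicals of the invariant ideals $I_k$, everything is then canonically $\mathbb{G}_n$-equivariant. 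That route is correct but needs this justification spelled out, and is in any case heavier machinery than the paper's one-line trick of putting the action only on the $E_n$ factor.
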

To make this precise, we need to specify how $\mathbb{G}_n$ acts. We will in fact show that $M_nE_n \simeq E_n \otimes M_nS^0$ and $\Sigma^{-n}E_n/I_n^{\infty} \simeq \Sigma^{-n}E_n \otimes S^0/I_n^{\infty}$, where $S^0/I_n^{\infty}$ is defined as the homotopy limit of the generalized Moore spectra of type $n$ (see \cite[Sec.~4.1]{hs_99}). Here the action of $\mathbb{G}_n$ is given by acting only on $E_n$, where it acts in the usual way. Moreover, the map will be the identity on $E_n$, and hence equivariant.

\begin{proof}[Proof of \Cref{prop:equivmonoequiv}]
By construction, the spectrum $S^0/I_n^\infty$ fits into the telescope tower of the $p$-local sphere spectrum $S^0$ as follows:

   \[
      \setlength\mathsurround{0pt}
    \begin{tikzcd}
      S^0 \arrow{d} & S^0/p^\infty  \arrow{d} \arrow[dashrightarrow]{l}& S^0/(p^\infty,v_1^\infty) \arrow{d} \arrow[dashrightarrow]{l} & S^0/I_3^\infty \arrow[dashrightarrow]{l} \arrow{d} & S^0/I_4^\infty \arrow[dashrightarrow]{l} \arrow{d}& \cdots \arrow[dashrightarrow]{l}\\
      p^{-1}S^0 \arrow{ur}   & v_1^{-1}S/p^\infty \arrow{ur} & v_2^{-1}S/(p^\infty, v_1^\infty) \arrow{ur} & v_3^{-1}S^0/I_3^\infty \arrow{ur} & v_3^{-1}S^0/I_4^\infty \arrow{ur}
    \end{tikzcd}
    \]
 In this diagram, each sequence
  \[
S^0/I_k^{\infty} \to v_k^{-1}S^0/I_k^{\infty} \to S^0/I_{k+1}^{\infty}
  \]
  is a cofiber sequence, so that the dashed arrows indicate that there is a map from the source to a suspension of the target. 

  After applying $\pi_*(E_n \otimes -)$ to these cofiber sequences, we claim that all the connecting homomorphisms induced by the dashed arrows in the telescope tower are trivial.  This is because the connecting homomorphisms are maps of $(E_n)_*$-modules from a $v_k$-torsion $(E_n)_*$-module to a $v_k$-torsion free $(E_n)_*$-module ($0 \leqslant k < n$). An inductive computation shows that $\pi_*(E_n \otimes S^0/I_n^\infty) \cong (E_n)_*/I_n^\infty$.  Thus,  $E_n/I_n^\infty \simeq E_n \otimes S^0/I_n^\infty$.
  
Since $M_n(v_k^{-1}S^0/I_k^{\infty}) \simeq \ast$ for $k < n$, after applying the $n$-th monochromatic layer we have $M_n(S^0/I_k^\infty) = \Sigma^{-1}M_n(S^0/I_{k+1}^\infty )$ for $k < n$.  Hence, we have a weak equivalence 
$$t \colon \Sigma^{-n}M_n(S^0/I_n^\infty) \to M_nS^0.$$

After smashing with $E_n$, we produce a $\mathbb{G}_n$-equivariant map 
\[
t \otimes \text{id}_{E_n} \colon \Sigma^{-n}M_n(S^0/I_n^\infty) \otimes E_n \to M_nS^0 \otimes E_n,
\]
where the action is entirely on $E_n$. 
Because $L_n$ is a smashing localization, we have $ \Sigma^{-n}M_n(S^0/I_n^\infty) \otimes E_n \simeq \Sigma^{-n}M_n(S^0/I_n^\infty  \otimes E_n) $.  The latter is $\Sigma^{-n}M_n(E_n/I_n^\infty) = \Sigma^{-n}E_n/I_n^\infty$. Therefore, $t \otimes \text{id}_{E_n}$ gives the desired $\mathbb{G}_n$-equivariant equivalence
  \[
\Sigma^{-n}E_n/I_n^{\infty}\xr{\simeq} M_nE_n.\qedhere
  \]

  \end{proof}

\subsection{\texorpdfstring{$C_2$}{C2}-equivariant homotopy}
Our first proof of the Gross--Hopkins dual of $E_n^{hC_2}$ will rely on techniques from $C_2$-equivariant homotopy theory, and so we begin by giving some background on what we will require. More general references include \cite[App.~A]{hhr}, \cite{hukriz}, \cite[Sec.~2]{hm_tmf}, \cite{green_may_tate}, and \cite{greenlees_four}. 

We will work in the category of genuine $C_2$-spectra. It follows that for a $C_2$-spectrum $X$ we can define homotopy groups $\pi^{C_2}_{x+y\sigma}X$ indexed on $RO(C_2) = \{ x + y\sigma \mid x,y \in \Z\}$, where $\sigma$ denotes the sign representation of $C_2$. We will use Hu and Kriz's convention of writing $\pi^{C_2}_{\star}X$ for $RO(C_2)$-graded homotopy groups, and $\pi^{C_2}_*X$ when we grade only over the integers. We denote by $\pi_*^eX$ the underlying homotopy groups of $X$ if there is any potential confusion. We also recall that the equivariant and non-equivariant homotopy groups can be combined into a Mackey functor, which we write as $\underline{\pi}_*X$. 

Following Hill--Meier \cite[Def.~3.1]{hm_tmf} we say that a $C_2$-spectrum $X$ is strongly even if $\underline{\pi}_{k\rho-1}X = 0$, and $\underline{\pi}_{k\rho}X$ is a constant Mackey functor for all $k \in \Z$, where $\rho = 1 + \sigma$ denotes the real regular representation. As in \cite{greenlees_meier} if $X$ is strongly even, and $e \in \pi^e_{2k}X$, we let $\bar{e}$ denote its counterpart in $\pi^{C_2}_{k\rho}X$, i.e., the preimage of $e$ under the isomorphism
\[
\pi_{k\rho}^{C_2}X \xr{\cong} \pi_{k\rho}^eX \cong \pi_{2k}^eX. 
\]  

\begin{ex}\label{ex:stronglyeven}
  Let $MU_\mathbb{R}$ denote the real bordism spectrum considered by Araki and Hu--Kriz \cite{hukriz}. This is a strongly even $C_2$-spectrum \cite[App.~A]{hm_tmf}. Similarly, as a $C_2$-spectrum, $E_n$ is strongly even, see the proof of Theorem 6.7 of \cite{hahn_shi}. The key input for this is the existence of a $C_2$-equivariant map $MU_\mathbb{R} \to E_n$, see \cite[Thm.~1.2]{hahn_shi}. 
\end{ex}
The following lemma of Greenlees \cite[Lem.~1.2]{greenlees_four} will prove useful. For this, we say that a $C_2$-spectrum $X$ is underlying even if $\pi^e_{2k-1}X = 0$ for all $k \in \Z$. 
\begin{lem}[Greenlees]\label{lem:greenstronglyeven}
  A $C_2$-spectrum $X$ is strongly even if and only if it is underlying even and $\pi^{C_2}_{\ast \rho -i}X = 0$ for $i = 1,2,3$. 
\end{lem}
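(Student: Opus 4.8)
The plan is to extract everything from the isotropy separation cofiber sequence of $C_2$-spectra
\[
(C_2)_+ \longrightarrow S^0 \longrightarrow S^\sigma ,
\]
associated to the inclusion of the unit sphere $S(\sigma)_+ \cong (C_2)_+$ into $S^0$. Smashing with a $C_2$-spectrum $X$ and applying $\pi^{C_2}_V(-) = [S^V,-]^{C_2}$ for every $V \in RO(C_2)$ produces, for each ``$\sigma$-weight'', a long exact sequence
\[
\cdots \to \pi^e_{|V|}X \xrightarrow{\ \tr\ } \pi^{C_2}_V X \xrightarrow{\ a_\sigma\ } \pi^{C_2}_{V-\sigma}X \xrightarrow{\ \res\ } \pi^e_{|V|-1}X \xrightarrow{\ \tr\ } \pi^{C_2}_{V-1}X \to \cdots ,
\]
where $|V|$ is the underlying dimension, using the standard identifications (see e.g.\ \cite[App.~A]{hhr}): the map out of an underlying group is the transfer, the middle map is multiplication by the Euler class $a_\sigma \in \pi^{C_2}_{-\sigma}S^0$, and the connecting map $\pi^{C_2}_{V-\sigma}X \to \pi^e_{|V|-1}X$ is the restriction $\res\colon \pi^{C_2}_W X \to \pi^e_{|W|}X$ for $W = V-\sigma$, up to sign. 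Each such sequence mixes only two consecutive $\sigma$-weights, so throughout I would use the instance joining weight $k$ to weight $k-1$; note that $\pi^{C_2}_{k\rho - j}X = \pi^{C_2}_{(k-j)+k\sigma}X$ sits at $\sigma$-weight $k$, while $\pi^{C_2}_{(k-1)\rho}X$ and $\pi^{C_2}_{(k-1)\rho - j}X$ sit at $\sigma$-weight $k-1$.

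For $(\Leftarrow)$, assuming $X$ underlying even with $\pi^{C_2}_{k\rho - j}X = 0$ for $j = 1,2,3$ and all $k\in\Z$, I would first observe that $\underline{\pi}_{k\rho - 1}X$ vanishes as a Mackey functor, since both of its values $\pi^{C_2}_{k\rho-1}X$ and $\pi^e_{2k-1}X$ are zero. Then I would run the long exact sequence through $\res\colon \pi^{C_2}_{(k-1)\rho}X \to \pi^e_{2k-2}X$: the term just to its left is $\pi^{C_2}_{k\rho-1}X = 0$, which forces this restriction to be injective, while the next $C_2$-fixed group, $\pi^{C_2}_{k\rho-2}X = 0$, forces it surjective; hence $\res\colon \pi^{C_2}_{k\rho}X \xrightarrow{\ \cong\ } \pi^e_{2k}X$ for all $k$. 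For the last point I would use that the restriction always lands in the Weyl invariants of $\pi^e_{2k}X$: for equivariant $\alpha\colon S^{k\rho}\to X$, equivariance gives $\gamma_X\circ\res(\alpha) = \res(\alpha)\circ\gamma$, where $\gamma$ acts on $S^{k\rho}|_e \simeq S^{2k}$ by a map of degree $(-1)^k$, so the two sign contributions in the conjugation action on $\res(\alpha)$ cancel; since $\res$ is onto, the $C_2$-action on $\pi^e_{2k}X$ is trivial. A Mackey functor whose restriction is an isomorphism onto a module with trivial Weyl action is constant, so $X$ is strongly even.

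For $(\Rightarrow)$, assuming $X$ strongly even, underlying evenness and $\pi^{C_2}_{k\rho-1}X = \underline{\pi}_{k\rho-1}X(C_2/C_2) = 0$ are immediate. For $j=2$ I would locate $\pi^{C_2}_{k\rho - 2}X$ in the long exact sequence: it is the target of a transfer out of $\pi^e_{2k-2}X$, the following $a_\sigma$-map lands in $\pi^{C_2}_{(k-1)\rho - 1}X = 0$, and the restriction $\pi^{C_2}_{(k-1)\rho}X \to \pi^e_{2k-2}X$ precedes that transfer; exactness then identifies $\pi^{C_2}_{k\rho-2}X$ with the cokernel of that restriction, which vanishes because $\underline{\pi}_{(k-1)\rho}X$ is constant, so the restriction is an isomorphism. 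With $\pi^{C_2}_{k\rho - 2}X = 0$ in hand for all $k$, the case $j=3$ follows: $\pi^{C_2}_{k\rho - 3}X$ sits in the long exact sequence between $\pi^e_{2k-3}X = 0$ (underlying evenness) and $\pi^{C_2}_{(k-1)\rho - 2}X = 0$ (the $j=2$ case applied to $k-1$), so it vanishes too. This establishes the equivalence.

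The main obstacle I anticipate is bookkeeping rather than conceptual: matching each $\pi^{C_2}_{k\rho - j}X$ to the correct member of the one-parameter family of long exact sequences, and being sure the connecting homomorphism there is genuinely (a sign times) the restriction --- this is precisely what lets ``$\underline{\pi}_{(k-1)\rho}X$ constant'' be converted into ``$\res$ invertible'' in the forward direction. The Weyl-triviality step in $(\Leftarrow)$, which at first looks like it needs a separate argument, is handled for free by the degree identity $(-1)^k\cdot(-1)^k = 1$ attached to the regular-representation grading.
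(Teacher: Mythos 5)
Your argument is correct: the isotropy separation sequence $(C_2)_+\to S^0\to S^\sigma$ yields exactly the long exact sequences relating $\pi^{C_2}_{k\rho-j}X$, $\pi^{C_2}_{(k-1)\rho-j}X$ and $\pi^e_*X$ via $\tr$, $a_\sigma$ and $\res$, and your bookkeeping in both directions checks out (including the observation that $\res\circ\tr = 1+\gamma$ forces a Mackey functor with invertible restriction and trivial Weyl action to be constant). The paper does not prove this lemma itself but defers to Greenlees \cite[Lem.~1.2]{greenlees_four}, whose argument is this same standard one, so there is nothing to contrast; the only remark worth making is that your $(\Leftarrow)$ direction never uses the $i=3$ hypothesis, which is harmless since that condition is implied by the others, as your $j=3$ step in $(\Rightarrow)$ in effect shows.
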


Consider the contractible free $C_2$-space $EC_2$ and the join $\tilde{E}C_2 \simeq S^0 \ast EC_2$, which fit together in a cofiber sequence
\[
EC_{2+} \to S^0 \to \tilde{E}C_2. 
\] 
We can then construct the following diagram of cofiber sequences for any $C_2$-spectrum $X$
\[
\setlength\mathsurround{0pt}
\begin{tikzcd}
X_h \arrow{r} \arrow{d}{\simeq} & X \arrow{r} \arrow{d} &  X^{\Phi} \arrow{d} \\
X_h \arrow{r} & X^h \arrow{r} & X^t. 
\end{tikzcd}
\]
where\footnote{In \cite{greenlees_four} Greenlees calls this the Scandinavian notation.}
\[
\begin{split}
X^h &= F(EC_{2+},X) \\
X_h &= EC_{2+} \otimes X \\
X^t &= F(EC_{2+},X) \otimes \tilde{E}C_2\\
X^{\Phi} &= X \otimes \tilde{E}C_{2}.
\end{split}
\]
Here the notation $X^{\Phi}$ is justified because $C_2$ has prime order, and hence the $C_2$-fixed points of $X^{\Phi}$ compute the geometric fixed points.\footnote{In general, the geometric fixed points of a $G$-spectrum $X$ are defined in the following way. Let $\tilde E\cal{F}$ be the $G$-CW complex defined as the cofiber of the map from $E\cal{F}_+ \to S^0$, where $E\cal{F}$ is the universal space associated to the family of proper subgroups of $G$. Then, the geometric fixed points of $X$ are the $G$-fixed points of $\tilde E\cal{F} \otimes X$, see \cite[Sec.~2.5.2]{hhr}. In the particular case of $C_2$, we have $\tilde E\cal{F} \simeq \tilde EC_2$, and the claim follows.} We have also used that the map $EC_{2+} \otimes X \to EC_{2+} \otimes F(EC_{2+},X)$ induced by $EC_{2+} \to S^0$ is always an equivalence of $C_2$-spectra, see \cite[Proposition I.1.2]{green_may_tate}. 

 Passing to $C_2$-fixed points we obtain the following diagram of cofiber sequences
\begin{equation}\label{eq:tate}
   \setlength\mathsurround{0pt}
\begin{tikzcd}
  X_{hC_2} \arrow{r} \arrow{d}[swap]{\simeq} & X^{C_2} \arrow{r} \arrow{d} & X^{\Phi C_2} \arrow{d} \\
  X_{hC_2} \arrow{r} & X^{hC_2} \arrow{r} & X^{tC_2}
\end{tikzcd}
\end{equation} 
where the bottom cofiber sequence is the norm sequence. 
\begin{defn}
  A $C_2$-spectrum $X$ is cofree if the map $X \to X^h = F(EC_{2+},X)$ is an equivalence. 
\end{defn}
As explained in \cite[Sec.~2.2.1]{hm_tmf} if $X$ is simply a spectrum with a $C_2$-action (i.e., an object of the functor category $\Fun(BC_2,\Sp)$), then $F(EC_{2+},X)$ is a genuine equivariant spectrum. In fact, by \cite[Thm.~2.4]{hm_tmf} if $X$ is a commutative ring spectrum with a $C_2$-action via commutative ring maps, then $F(EC_{2+},X)$ is an equivariant commutative ring spectrum. The conditions on $X$ are satisfied when $X = E_n$ by the Goerss--Hopkins--Miller theorem \cite{goerss_hopkins}. We use this to view $E_n$ as a genuine $C_2$-equivariant spectrum.
\begin{defn}
  The genuine $C_2$-equivariant Lubin--Tate spectrum $E_n$ is defined to be to cofree $C_2$-spectrum $F(EC_{2+},E_n)$. 
\end{defn}
By construction, this has the property that 
\begin{equation}\label{cor:fixvshfp}
E_n^{C_2} \cong E_n^{hC_2}. 
\end{equation}

Associated to the norm sequence are spectral sequences
\[
\begin{split}
H_s(C_2,\pi_tX) &\implies \pi_{t+s}X_{hC_2} \\
H^s(C_2,\pi_tX) &\implies \pi_{t-s}X^{hC_2} \\
\hat{H}^s(C_2,\pi_tX) & \implies \pi_{t-s} X^{tC_2}.
\end{split}
\]
In fact, these can be made into $RO(C_2)$-graded spectral sequences computing $\pi^{C_2}_{\star}X_h$, $\pi^{C_2}_{\star}X^h$ and $\pi^{C_2}_{\star}X^t$, respectively. By looking only at the integer degrees of these spectral sequence, we recover the standard spectral sequences. Although this seems like more work, it may be the case that the differentials are easier to describe, or to compute, in the $RO(C_2)$-grading. This is especially true in light of the large work on computation in $C_2$-equivariant homotopy theory, pioneered by Hu and Kriz \cite{hukriz}, and later by Hill, Hopkins, and Ravenel \cite{hhr}. 

\section{The Picard group of \texorpdfstring{$E_n^{hC_2}$}{EnhC2}}
In this section we use the descent techniques of Mathew and Stojanoska to compute the Picard group $\Pic(E_n^{hC_2})$. 

\subsection{The \texorpdfstring{$RO(C_2)$}{RO(C2)}-graded homotopy fixed point spectral sequence for \texorpdfstring{$E_n$}{En}}
Once again, we let $E_n$ denote Morava $E$-theory at the prime $2$. Hahn and Shi have shown \cite[Thm.~1.2]{hahn_shi} that there is a equivariant map from $MU \R \to E_n$, which induces a map of spectral sequences 
\[
C_2\text{-}\HFPSS(MU\mathbb{R}) \to C_2\text{-}\HFPSS(E_n).
\]
Using this Hahn and Shi \cite[Thm.~6.2]{hahn_shi} were able to fully compute the $RO(C_2)$-graded HFPSS for $E_n$. 
\begin{thm}[Hahn--Shi]\label{thm:additivess}
  \begin{enumerate}
    \item The $\E_2$-term of the $RO(C_2)$-graded homotopy fixed point spectral sequence for $E_n$ is
  \[
\E_2^{s, t} = \mathbb{W}(\mathbb{F}_{2^n})[\![\bar{u}_1, \bar{u}_2, \ldots, \bar{u}_{n-1}]\!][\bar{u}^{\pm 1}] \otimes \mathbb{Z}[u_{2\sigma}^{\pm 1}, a_\sigma]/(2a_\sigma).
\]
Here the classes have bidegrees $|\bu_i| = (0,0)$, $|\bar{u}| = (\rho,0)$, $|u_{2\sigma}| = (2-2\sigma,0)$ and $|a_{\sigma}| = (-\sigma,1)$. 
\item The classes $\bar{u}_1$, $\ldots$, $\bar{u}_{n-1}$, $\bar{u}^{\pm 1}$, and $a_\sigma$ are permanent cycles.  All the differentials in the spectral sequence are determined by the differentials 
\begin{eqnarray*}
d_{2^{k+1} -1} (u_{2\sigma}^{2^{k-1}}) &=&  \bar{u}_k\bar{u}^{2^k-1}a_\sigma^{2^{k+1}-1}, \, \, \, 1 \leq k \leq n-1, \\ 
d_{2^{n+1}-1}(u_{2\sigma}^{2^{n-1}})&=& \bar{u}^{2^n-1}a_\sigma^{2^{n+1}-1}, \, \, \, k = n, \
\end{eqnarray*}
and multiplicative structures. 
  \end{enumerate}

\end{thm}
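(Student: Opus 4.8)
\emph{Strategy.} The plan is to deduce everything from the strong evenness of $E_n$ (\Cref{ex:stronglyeven}) together with the $C_2$-equivariant orientation $MU_{\R}\to E_n$ of Hahn--Shi and the known slice/homotopy-fixed-point differentials for $MU_{\R}$ (equivalently $BP\R$, after Hu--Kriz and Hill--Hopkins--Ravenel).
\emph{The $\E_2$-page.} Since $E_n$ is strongly even, \Cref{lem:greenstronglyeven} gives $\underline{\pi}_{k\rho-i}E_n=0$ for $i=1,2,3$ and shows that $\underline{\pi}_{k\rho}E_n$ is the constant Mackey functor on $\pi_{2k}^e E_n=W(\F_{2^n})[\![u_1,\dots,u_{n-1}]\!]\cdot u^k$; in particular the refinements $\bu_i$ of $u_i$ and $\bu$ of a generator of $\pi_2^e E_n$ exist. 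The $RO(C_2)$-graded $\E_2$-page of the HFPSS is then the Borel-equivariant Bredon cohomology of a point with these (constant) coefficients. Feeding in the classical computation that the $RO(C_2)$-graded homotopy of the Borel completion of $H\underline{\Z}$ is $\Z[u_{2\sigma}^{\pm1},a_\sigma]/(2a_\sigma)$ — with $u_{2\sigma}$ invertible because $EC_{2+}\otimes S^{2\sigma}\simeq EC_{2+}\otimes S^{2}$ — yields exactly $\E_2^{s,t}=W(\F_{2^n})[\![\bu_1,\dots,\bu_{n-1}]\!][\bu^{\pm1}]\otimes\Z[u_{2\sigma}^{\pm1},a_\sigma]/(2a_\sigma)$, the bidegrees being bookkeeping (e.g.\ $\bu$ refines a class of underlying degree $2$, so $|\bu|=(\rho,0)$). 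Equivalently, this $\E_2$-page is obtained from that of $MU_{\R}$ by base change along $(MU_{\R})_\star\to (E_n)_\star$. This proves part (1).

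\emph{Permanent cycles.} The classes $\bu_i$ and $\bu^{\pm1}$ sit in filtration $0$ and are the images under the edge homomorphism of genuine classes in $\pi^{C_2}_\star E_n\cong\pi^{C_2}_\star E_n^{hC_2}$, using that $E_n$ is cofree, so $E_n^{C_2}\simeq E_n^{hC_2}$, see \eqref{cor:fixvshfp}; hence they are permanent cycles. The Euler class $a_\sigma$ is pulled back from $\pi^{C_2}_{-\sigma}S^0$ along the unit $S^0\to E_n$ and is represented by the genuine map $S^{-\sigma}\to S^0$, so it is a permanent cycle in the sphere's $RO(C_2)$-graded HFPSS and therefore in that of $E_n$.

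\emph{The differentials (the crux).} The orientation $MU_{\R}\to E_n$ induces a map of $RO(C_2)$-graded HFPSS, and the source has differentials $d_{2^{k+1}-1}(u_{2\sigma}^{2^{k-1}})=\bar r_{2^k-1}\,a_\sigma^{2^{k+1}-1}$ modulo decomposables, where $\bar r_{2^k-1}\in\pi^{C_2}_{(2^k-1)\rho}MU_{\R}$ refines the degree-$2(2^k-1)$ polynomial generator of $\pi_*MU$ (these are the Hu--Kriz / Hill--Hopkins--Ravenel slice differentials transported to the Borel spectral sequence; over $BP\R$ the generator is literally $\bar v_k$). One then tracks the image of $\bar r_{2^k-1}$ in $\pi^{C_2}_\star E_n$: underlying, the generator maps modulo $I_n=(2,u_1,\dots,u_{n-1})$ to a unit times $u_k u^{2^k-1}$ for $k<n$, reflecting the vanishing of the lower coefficients of the height-$n$ Honda formal group law, and to a unit times $u^{2^n-1}$ for $k=n$, reflecting that $[2](x)=x^{2^n}$ modulo $I_n$; refining, $\bar r_{2^k-1}\mapsto(\text{unit})\cdot\bu_k\bu^{2^k-1}$ for $k<n$ and $\bar r_{2^n-1}\mapsto(\text{unit})\cdot\bu^{2^n-1}$. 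Absorbing units and discarding the decomposable corrections — which, by the time $d_{2^{k+1}-1}$ is to be computed, have already become boundaries of earlier differentials and so vanish, since $\bu$ is invertible and $\bu_j\bu\,a_\sigma^{2^{j+1}-1}=0$ on $\E_{2^{j+1}}$ for $j<k$ — gives exactly
\[
d_{2^{k+1}-1}(u_{2\sigma}^{2^{k-1}})=\bu_k\bu^{2^k-1}a_\sigma^{2^{k+1}-1}\ (1\le k\le n-1),\qquad d_{2^{n+1}-1}(u_{2\sigma}^{2^{n-1}})=\bu^{2^n-1}a_\sigma^{2^{n+1}-1}.
\]
I expect this step — pinning down the images of the $\bar r_{2^k-1}$ under the Lubin--Tate orientation, and in particular identifying the $k=n$ output with a \emph{unit} multiple of a power of $\bu$ rather than something in the maximal ideal — to be the main obstacle, since a bookkeeping slip here changes the entire differential pattern.

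\emph{Collapse.} The Leibniz rule gives $d_{2^{k+1}-1}(u_{2\sigma}^{2^{k-1}j})=j\,u_{2\sigma}^{2^{k-1}(j-1)}\bu_k\bu^{2^k-1}a_\sigma^{2^{k+1}-1}$, which vanishes precisely when $j$ is even because $2a_\sigma=0$; thus $u_{2\sigma}^{2^k}$ survives $d_{2^{k+1}-1}$, and inductively $u_{2\sigma}^{2^n}$ is a permanent cycle. After $d_{2^{n+1}-1}$ the class $\bu^{2^n-1}a_\sigma^{2^{n+1}-1}$ is a boundary, and as $\bu$ is invertible on every page this forces $a_\sigma^{2^{n+1}-1}=0$ on $\E_{2^{n+1}}$. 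Since every class of filtration $s$ on $\E_2$ is $a_\sigma^s$ times a filtration-$0$ class, multiplicativity shows $\E_{2^{n+1}}$ vanishes in filtrations $\ge 2^{n+1}-1$; hence no $d_r$ with $r\ge 2^{n+1}$ can be nonzero and $\E_{2^{n+1}}=\E_\infty$. Therefore the listed differentials together with multiplicativity determine the whole spectral sequence, proving part (2). As a consistency check, for $n=1$ this reduces to the single differential $d_3(u_{2\sigma})=\bu a_\sigma^3$ and recovers the $8$-periodicity of $E_1^{hC_2}\simeq KO^\wedge_2$.
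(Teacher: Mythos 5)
The paper does not prove this theorem; it is imported verbatim from Hahn--Shi \cite[Thm.~6.2]{hahn_shi}, and your proposal is essentially a faithful reconstruction of their argument: strong evenness for the $\E_2$-page, the Real orientation $MU_{\mathbb{R}} \to E_n$ to transport the Hu--Kriz/Hill--Hopkins--Ravenel slice differentials, and the identification of the images of the $\bar{r}_{2^k-1}$ (i.e.\ $\bar{v}_k$) modulo $I_n$ and modulo earlier boundaries. You correctly flag the one genuinely delicate step (pinning down those images, especially the unit at $k=n$ coming from $[2](x)=x^{2^n}$), which is exactly where the work lies in the source.
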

\begin{rem}
  Working non-equivariantly it is not hard to compute the $\E_2$-term of the homotopy fixed point spectral sequence computing $E_n^{hC_2}$, however determining the differentials directly seems difficult. The use of $RO(C_2)$-grading is helpful in determining these, as the differentials in the spectral sequence for $MU_\mathbb{R}$ are fully understood by work of Hu and Kriz \cite{hukriz}.
\end{rem}
\begin{rem}\label{rem:diff_periodicity}
  We observe that the $d_{2^{k+1}-1}$-st differential is $2^{k+2}$-periodic, with periodicity generator $u_{2\sigma}^{2^{k}}\bu^{2^{k+1}}$.
\end{rem}
\begin{cor}
The homotopy groups $\pi_*E_n^{hC_2}$ are $2^{n+2}$-periodic. 
\end{cor}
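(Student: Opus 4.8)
The plan is to deduce the periodicity of $\pi_*E_n^{hC_2}$ directly from the structure of the $RO(C_2)$-graded homotopy fixed point spectral sequence described in \Cref{thm:additivess}. First I would identify a candidate periodicity class. Since $\bar{u}$ has bidegree $(\rho,0)$, the element $\bar{u}^{2^n}$ sits in topological degree $2^n\rho = 2^{n+1} + 2^n\sigma$, which is not an integer-graded class; so instead one wants a power of $\bar{u}$ multiplied by a power of $u_{2\sigma}$ (which has degree $2 - 2\sigma$) to land in an integer degree. The combination $\bar{u}^{2^n} u_{2\sigma}^{2^{n-1}}$ has degree $2^n\rho + 2^{n-1}(2-2\sigma) = 2^{n+1} + 2^n\sigma + 2^n - 2^n\sigma = 2^{n+2}$, an integer. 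Concretely, on underlying homotopy $\bar{u}$ restricts to $u^2$ and $u_{2\sigma}$ restricts to $u^{-2}$, so this class restricts to $u^{2^{n+1}}\cdot u^{-2^n} = u^{2^n}$... one should double-check the bookkeeping, but the upshot is that there is a class in $\E_2^{0,2^{n+2}}$ that is the natural periodicity candidate.

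The key step is then to show this candidate class survives the spectral sequence. By \Cref{thm:additivess}(2), the only differentials are generated by the listed $d_{2^{k+1}-1}$ on powers of $u_{2\sigma}$, together with the Leibniz rule; the classes $\bar{u}_i$, $\bar{u}^{\pm1}$, and $a_\sigma$ are permanent cycles. So I would compute $d_r$ applied to $\bar{u}^{2^n}u_{2\sigma}^{2^{n-1}}$ using the Leibniz rule: since $\bar{u}$ is a permanent cycle, the differential is $\bar{u}^{2^n} d_r(u_{2\sigma}^{2^{n-1}})$, and the only possibly-nonzero such differential is $d_{2^{n+1}-1}(u_{2\sigma}^{2^{n-1}}) = \bar{u}^{2^n-1}a_\sigma^{2^{n+1}-1}$. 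Thus $d_{2^{n+1}-1}(\bar{u}^{2^n}u_{2\sigma}^{2^{n-1}}) = \bar{u}^{2^{n+1}-1}a_\sigma^{2^{n+1}-1}$, which is \emph{not} zero. This means I have picked the wrong class: one needs a power of $u_{2\sigma}$ that is itself a permanent cycle. Rather, since the subtlety is that $u_{2\sigma}^{2^{n-1}}$ supports a differential but $u_{2\sigma}^{2^{n}}$ or a higher power does not — indeed the total differential pattern implies that once $k$ exceeds $n$ there are no more generating differentials — I would instead take $u_{2\sigma}^{2^n}$ (a permanent cycle, as all differentials on $u_{2\sigma}$-powers are exhausted by $k \le n$), giving a class $\bar{u}^{?}u_{2\sigma}^{2^n}$ in integer degree. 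Tracking degrees: $u_{2\sigma}^{2^n}$ has degree $2^n(2-2\sigma) = 2^{n+1} - 2^{n+1}\sigma$, so multiplying by $\bar{u}^{2^n}$ of degree $2^n\rho = 2^{n+1}+2^n\sigma$ does not cancel the $\sigma$; one needs $\bar{u}^{2^{n+1}}$, landing in degree $2^{n+2} + 2^{n+1}\sigma + 2^{n+1} - 2^{n+1}\sigma = 3\cdot 2^{n+1}$. I would reconcile these degree computations carefully against the known $n=1$ case ($8$-periodicity of $KO$) to pin down the correct monomial.

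The main obstacle is precisely this degree bookkeeping together with verifying that the chosen monomial is both a permanent cycle and a \emph{unit} on $\E_\infty$ (so that multiplication by it is an isomorphism, not merely injective). For the unit property I would use that $\bar{u}^{\pm1}$ is invertible on $\E_2$ and a permanent cycle, and that a suitable power of $u_{2\sigma}$ becomes a permanent cycle and is invertible modulo the image of differentials — here one must check there is no element hitting it and that its inverse also survives, which follows because $u_{2\sigma}^{\pm1}$ is a unit on $\E_2$ and the relevant power lies above the range of all generating differentials. Once a permanent cycle $v \in \E_\infty^{0,2^{n+2}}$ (or its appropriate analog) that is a unit is produced, multiplication by a lift of $v$ in $\pi_0 E_n^{hC_2}$... more precisely in $\pi_{2^{n+2}}E_n^{hC_2}$, induces an isomorphism of $\E_\infty$-pages shifted by $(0,2^{n+2})$, hence an isomorphism on associated gradeds and therefore on $\pi_*E_n^{hC_2}$, yielding the claimed $2^{n+2}$-periodicity. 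I would finally cross-check the period against the already-cited result of Hahn--Shi that $E_n^{hC_2}$ is $2^{n+2}$-periodic, so in practice this corollary is really just extracting the integer-graded shadow of their $RO(C_2)$-graded computation.
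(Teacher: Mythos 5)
Your approach is exactly the paper's: the proof there consists of the single observation that the invertible class $\bu^{2^{n+1}}u_{2\sigma}^{2^n}$ survives the homotopy fixed point spectral sequence, and you arrive at that same monomial. The only issue is the degree bookkeeping you flagged: $k\rho = k + k\sigma$ (not $2k + k\sigma$), so $\bu^{2^{n+1}}u_{2\sigma}^{2^n}$ lies in degree $2^{n+1}(1+\sigma) + 2^n(2-2\sigma) = 2^{n+2}$, giving the stated period rather than your $3\cdot 2^{n+1}$ (and your first candidate $\bu^{2^n}u_{2\sigma}^{2^{n-1}}$ sits in degree $2^{n+1}$, not $2^{n+2}$). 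With that correction the argument is complete and matches the paper.
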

\begin{proof}
  The invertible class $\bu^{2^{n+1}} u_{2\sigma}^{2^n}$ survives the homotopy fixed point spectral sequence. 
\end{proof}

\begin{figure}[hbt!]
\begin{center}
  \scalebox{.8}{\includegraphics{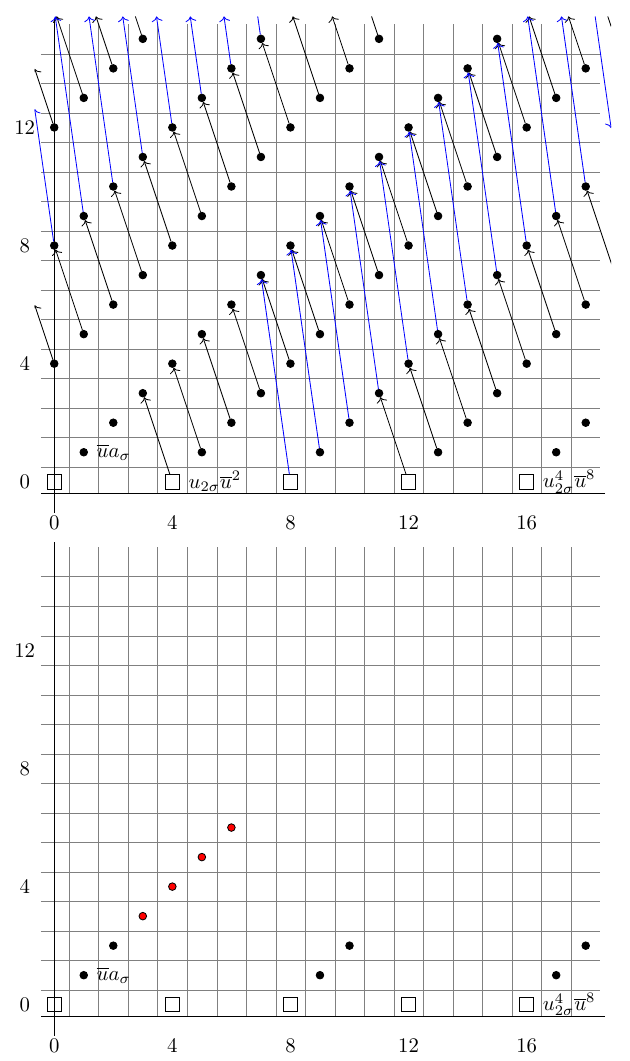}}
  \caption{The homotopy fixed point spectral sequence for $E_2^{hC_2}$. Here the squares denote copies of $\mathbb{W}(\F_4)[\![u_1]\!]$, black circles denote $\F_4[\![u_1]\!]$, and red circles $\F_4$. The top image shows the $\E_2$-page along with all $d_3$ and $d_7$ differentials. The bottom image shows the $E_{\infty}$-page, which is 16-periodic on the class $u_{2\sigma}^4\bu^8$.}\label{fig:hfpssadditive}
  \end{center}
\end{figure}
We demonstrate this spectral sequence for $n = 2$ in \Cref{fig:hfpssadditive}. The case of $n=3$ can be found in \cite{hahn_shi}. We note that in both these cases we have $\pi_kE_n^{hC_2} = 0$ for $k \equiv -3,-2,-1 \pmod{2^{n+2}}$. This holds for arbitrary $n$, and is in fact the only such gap in the spectral sequence. 
\begin{prop}\label{prop:recognition}
    We have $\pi_kE_n^{hC_2} = \pi_{k+1}E_n^{hC_2} = \pi_{k+2}E_n^{hC_2} = 0$ if and only if $k \equiv -3 \pmod{2^{n+2}}$. 
\end{prop}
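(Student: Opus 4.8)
The plan is to read off the answer directly from the description of the $E_\infty$-page of the $RO(C_2)$-graded homotopy fixed point spectral sequence in \Cref{thm:additivess}. First I would fix notation: the integer-graded $\E_2$-term is the subring of
\[
W(\mathbb{F}_{2^n})[\![\bar{u}_1,\ldots,\bar{u}_{n-1}]\!][\bar{u}^{\pm 1}]\otimes \mathbb{Z}[u_{2\sigma}^{\pm 1},a_\sigma]/(2a_\sigma)
\]
sitting in integer topological degree $t-s$, where $|\bar u| = (\rho,0) = (2,0)$ after restricting to integer grading, $|u_{2\sigma}| = (2-2\sigma,0)$ contributes $2$ to the topological degree, and $|a_\sigma| = (-\sigma,1)$ contributes $-1$. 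Since the $\bar u_i$ and $\bar u^{\pm 1}$ are permanent cycles and units, the $2^{n+2}$-periodicity on $\bar u^{2^{n+1}}u_{2\sigma}^{2^n}$ lets me reduce everything to understanding which topological degrees mod $2^{n+2}$ survive.

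Next I would make the single reduction that the only differentials are generated by the $d_{2^{k+1}-1}$ listed, all of which are $\bar u_i, \bar u, a_\sigma$-linear and kill powers of $u_{2\sigma}$ (and multiplicatively, their targets). So on $E_\infty$ the classes surviving in nonnegative filtration are exactly the $\bar u^\ast$-multiples of the permanent cycles $a_\sigma^j$ for the appropriate range of $j$, together with the surviving powers of $u_{2\sigma}$ on filtration $0$. Concretely I would claim: in each topological degree, the possible filtrations $s$ of surviving classes are $s=0$ (from powers of $u_{2\sigma}$, in even topological degree) and positive $s$ coming from $a_\sigma^s$ times a $u_{2\sigma}$-power that has survived; the latter contributes to topological degree $\equiv -s \pmod 2$ but with $s$ bounded by the differential pattern. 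The cleanest way to organize this is to note $a_\sigma^{2^{n+1}-1}$ generates (over the $d_{2^{n+1}-1}$-differential target) the top of the tower, so $a_\sigma^s$ survives multiplied by an appropriate unit iff $s \le 2^{n+1}-1$ roughly, and tracking the degrees of the $u_{2\sigma}$-powers that are permanent cycles pins down the congruence classes. This is exactly the bookkeeping already carried out implicitly in \Cref{fig:hfpssadditive} for $n=2$, and I would state it as: $\pi_k E_n^{hC_2}=0$ precisely when $k$ is not the topological degree of any surviving class.

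Then the heart of the argument is the computation modulo $2^{n+2}$. By periodicity it suffices to check $k$ in a single period, say $-3 \le k \le 2^{n+2}-4$. I would show: (i) for $k\equiv -3\pmod{2^{n+2}}$ all three of $\pi_k,\pi_{k+1}=\pi_{-2},\pi_{k+2}=\pi_{-1}$ vanish — this is the three-cell gap visible as $t-s \in\{-3,-2,-1\}$ carrying no $E_\infty$-classes, which follows because the relevant $a_\sigma$-towers in those degrees are entirely truncated by the differentials $d_{2^{j+1}-1}(u_{2\sigma}^{2^{j-1}})$; and (ii) conversely, for every other $k$ in the period, at least one of $\pi_k,\pi_{k+1},\pi_{k+2}$ is nonzero — for $k$ even this is immediate since $\pi_k\ne 0$ (a power of $u_{2\sigma}$ times a unit survives on the $0$-line), and for $k$ odd either $\pi_k\ne 0$ already (from some surviving $a_\sigma$-class, e.g.\ $\bar u^\ast a_\sigma^{\text{odd power}}$ in the right degree) or at least $\pi_{k+1}\ne 0$ because $k+1$ is even. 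The only $k$ for which both neighbours could conceivably also vanish are those adjacent to the unique gap, so the claim is sharp. I expect the main obstacle to be (i): verifying that there is no surviving $a_\sigma$-power in topological degrees $-3,-2,-1$ requires carefully solving the system $\rho\cdot a - \sigma\cdot b = $ (integer degree with $s=b$) under the constraint that the corresponding $u_{2\sigma}$-power is a permanent cycle, i.e.\ ruling out survivors by checking that every candidate monomial $\bar u^m u_{2\sigma}^\ell a_\sigma^s$ of the right topological degree either supports or receives one of the $n$ named differentials; this is a finite but slightly intricate check that the filtration $s$ of any class in those degrees exceeds the truncation bound $2^{n+1}-1$ after accounting for which $u_{2\sigma}^{2^{j-1}}$ have died.
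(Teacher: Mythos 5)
There is a genuine gap, and it sits exactly where the content of the proposition lives. Your step (ii) asserts that $\pi_kE_n^{hC_2}\neq 0$ for every even $k$ because ``a power of $u_{2\sigma}$ times a unit survives on the $0$-line.'' This is false: the integer-graded part of the $0$-line consists of the monomials $\bu^{2b}u_{2\sigma}^{b}$, which live only in stems divisible by $4$ (and indeed $\pi_{-2}E_n^{hC_2}=0$, so even stems do vanish). The correct statement, $\pi_k\neq 0$ for $k\equiv 0\pmod 4$, disposes of every residue of $k$ mod $4$ except $k\equiv 1\pmod 4$, where none of $k,k+1,k+2$ is a multiple of $4$. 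But $-3\equiv 1\pmod 4$, so the entire point of the proposition is to decide, among the classes $k\equiv 1\pmod 4$, which one carries the gap. Your fallback for odd $k$ --- ``either $\pi_k\neq 0$ from some surviving $a_\sigma$-class, or $\pi_{k+1}\neq 0$ because $k+1$ is even'' --- is circular: the second disjunct rests on the false even-degree claim, and the first disjunct is precisely what must be proved. Closing this requires exhibiting explicit surviving positive-filtration classes covering all odd stems in $[1,2^{n+2}-4]$; the paper does this with the families $(\bu^{i}a_{\sigma}^{i})\bar{u}^{(2^{j-1}-1)2^{n+2-j}}u_{2\sigma}^{(2^{j-1}-1)2^{n+1-j}}$ for $1\le j\le n$, and some such bookkeeping cannot be avoided.

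Your part (i) is also not an argument yet: verifying that nothing survives in stems $-1,-2,-3$ directly from \Cref{thm:additivess} means killing the infinite families $\F_{2^n}[\![u_1,\ldots,u_{n-1}]\!]\bu^{2\ell-1}u_{2\sigma}^{-\ell}a_{\sigma}^{4\ell-1}$ (and their analogues in stems $-2,-3$) for all $\ell\ge 1$, a case analysis on $\ell$ modulo powers of $2$ comparable to the Picard-spectral-sequence computation in the paper; you defer this to ``a finite but slightly intricate check'' without performing it, and it is not finite in any naive sense. The paper sidesteps the whole issue: since $E_n$ is strongly even (\Cref{ex:stronglyeven}), Greenlees' criterion (\Cref{lem:greenstronglyeven}) gives $\pi^{C_2}_{\ast\rho-i}E_n=0$ for $i=1,2,3$, and taking $\ast=0$ together with $E_n^{C_2}\simeq E_n^{hC_2}$ yields the vanishing in degrees $-1,-2,-3$ with no spectral-sequence analysis at all. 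I would recommend adopting that route for (i) and supplying the explicit permanent cycles for (ii).
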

\begin{proof}
  By \Cref{ex:stronglyeven} $E_n$ is strongly even as a $C_2$-spectrum. We can now apply \Cref{lem:greenstronglyeven} and \eqref{cor:fixvshfp} to deduce that $\pi_{k}^{C_2}E_n \cong \pi_kE_n^{hC_2} = 0$ for $k = -3,-2,-1$. It remains to show that, modulo periodicity, this is the only such gap of three zero terms. Note that $\pi_kE_n^{hC_2} \ne 0$ for $k \equiv 0 \pmod{4}$ because the classes $u_{2\sigma}^k\bu^{2k}$ in bidegree $(4k,0)$ are permanent cycles in the spectral sequence, so this is the longest such possible gap of zeros. 

  We will show that there is no such other gap by exhibiting permanent cycles in the homotopy fixed point spectral sequence. We make the following observations, which are the essential point to the proof:\\

  \noindent Observation (1): The classes $\bu^i a_{\sigma}^i$ are the targets of between 0 and $n$ differentials, depending on the filtration degree (the class in filtration degree $2^{k+1}-1$ is the target of $k$-differentials), with differentials determined by 
  \[
d_{2^{k+1}-1}(u_{2\sigma}^{2^{k-1}}\bu^{2^k}) = \bu_k \bu^{2^{k+1}-1}a_{\sigma}^{2^{k+1}-1}, \quad 1 \le k \le n
  \]
  where $\bu_n = 1$. \\

  \noindent Observation (2): The classes $\bu^{2^n+i}a_{\sigma}^{i}u_{2\sigma}^{2^{n-1}}$ are the targets of between 0 and $n-1$ differentials, depending on the filtration degree (the class in filtration degree $2^{k+2}-1$ is the target of $k$-differentials), with differentials determined by 
\[
d_{2^{k+1}-1}(u_{2\sigma}^{2^{n-1}+k}\bu^{2^n+2k}) = \bu_k \bu_{2\sigma}^{2^{n-1}+k}\bu^{2^n+2^{k+1}-1}a_{\sigma}^{2^{k+1}-1}, \quad 1 \le k \le n-1. 
\]
 \\
 
\noindent Observation (3): In general, for $1 \le j \le n$, the classes $e^i_{j,n} = (\bu a_{\sigma})^i\bar{u}^{(2^{j-1}-1)\cdot2^{n+2-j}}u_{2\sigma}^{(2^{j-1}-1)\cdot 2^{n+1-j}}$ are the targets of between 0 and $n - j$ differentials, depending on the filtration degree (the class in filtration degree $2^{k+j+1}-1$ is the target of $k$-differentials), with differentials determined by 
  \[
  \begin{split}
d_{2^{k+1}-1}(\bar{u}^{(2^{j-1}-1)\cdot2^{n+2-j}+k}&u_{2\sigma}^{(2^{j-1}-1)\cdot 2^{n+1-j}+2k}) =\\ &\bu_k \bar{u}^{(2^{j-1}-1)\cdot2^{n+2-j}+2^{k+1}-1}u_{2\sigma}^{(2^{j-1}-1)\cdot 2^{n+1-j}}a_{\sigma}^{2^{k+1}-1}, \quad 1 \le k \le n-j.
\end{split}
  \]
  We now break the proof down into a number of steps. \\

\noindent (1)   First, we observe that every class in positive filtration in the $\cE_2$-page of the spectral sequence is of the form 
\[
\mathbb{F}_{2^n}[\![\bar{u}_1, \bar{u}_2, \ldots, \bar{u}_{n-1}]\!][\bar{u}^{\pm 1}]\bu^{\alpha}u_{2\sigma}^{\beta}a_{\sigma}^{\gamma}. 
\]
 It follows from \Cref{thm:additivess} that the classes $\bu^{k}a_{\sigma}^{k}$ for $1 \le k \le 2^{n+1}-2$ all survive to give permanent cycles in $\pi_kE_n^{hC_2}$. Note that we do not claim that these classes are not involved in differentials. Indeed this is false, and from Observation (1) we see that these classes are the target of between 0 and $n-1$ differentials, depending on the filtration degree. For example, consider the topmost class $\bu^{2^{n+1}-2}a_{\sigma}^{2^{n+2}-2}$. This is the target of $n-1$ non-trivial differentials, where the $d_{2^{k+1}-1}$-differential (for $1 \le k \le n-1$) quotients out by the principal ideal $(\bar{u}_k)$. It follows that a single copy of $\F_{2^n}$ survives the spectral sequence at this point. (In \Cref{fig:hfpssadditive} one can see this when $n = 2$; the red class in bidegree (6,6) is precisely the claimed copy of $\F_4$.) We deduce that there can be no such gap for $1 \le k \le 2^{n+1}$.  \\

  \noindent (2) Similarly, we claim that the classes $\bu^{2^n+i}a_{\sigma}^{i}u_{2\sigma}^{2^{n-1}}$ for $1 \le i \le 2^{n}-2$ survive and give permanent cycles in $\pi_kE_n^{hC_2}$ for $2^{n+1}+1 \le k \le 3\cdot2^n-2$. Once again, we do not claim that these classes are not involved in differentials. Using Observation (2), we see that these classes are the target of at most $n-2$ non-trivial differentials, where each differential quotients out by some $(\bu_k)$.  In this case, the classes $\bu^{2^n+i}a_{\sigma}^{i}u_{2\sigma}^{2^{n-1}}$ also support a non-trivial differential, determined by
  \[
d_{2^{n+1}-1}(\bu^{2^n+i}u_{2\sigma}^{2^{n-1}}a_{\sigma}^i) = \bu^{2^{n+1}+i-1}a_{\sigma}^{2^{n+1}+i-1}. 
  \] 
  However, the targets of these differentials are always copies of $\mathbb{F}_{2^n}$, because they have already supported $n-1$ non-trivial differentials (indeed, they are exactly the classes considered in Step (1)). It follows that these differentials always have non-trivial kernel, and so there are non-trivial classes in these bidegrees as claimed. Because we always have non-zero classes for $k \equiv 0 \pmod{4}$ we see that there is no gap for $0 \le k \le 3\cdot2^{n}$. \\

  \noindent (3) In general, for $1 \le j \le n$, the classes $e^i_{j,n}=(\bu a_{\sigma})^i\bar{u}^{(2^{j-1}-1)\cdot2^{n+2-j}}u_{2\sigma}^{(2^{j-1}-1)\cdot 2^{n+1-j}}$ for $1 \le i \le 2^{n+2-j}-2$ survive, and contribute non-zero classes in $\pi_kE_n^{hC_2}$ for \[(2^{j-1}-1)\cdot 2^{n+3-j} +1  \le k \le (2^j-1)\cdot 2^{n+2-j}-2.
  \]
  One can see this by arguing similarly to as done previously, which considered the cases where $j = 1$ and $2$. Depending on the filtration degree, we see from Observation (3) that these classes are the target of between $0$ and $n-j$ non-trivial differentials.

   For $j>1$, we claim that the $e^{i}_{j,n}$ are also the source of a $d_{2^{n+3-j}-1}$-differential. To see this, observe that 
  \[
  (2^{j-1}-1)\cdot 2^{n+1-j} = (2^{j-2}-1)\cdot 2^{n+2-j}+2^{n+1-j}
\]
Using the periodicity of the differentials (\Cref{rem:diff_periodicity}) it then follows there is a differential
  \[
d_{2^{n+3-j}-1}(e^i_{j,n}) = \bu_{n+2-j}(\bu a_{\sigma})^i u_{2\sigma}^{(2^{j-2}-1)\cdot 2^{n+1-j}}a_{\sigma}^{2^{n+3-j}-1}\bu^{(2^{j-2}+1)\cdot 2^{n+2-j}}
  \] 
  where we again make the convention that $\bu_n = 1$. Observe that the target of this differential is simply $\bu_{n+2-j}e^i_{j-1,n}$. Moreover, using Observation (3), we see that this target supports at most $n - j +1$ differentials, depending on the filtration degree. In fact, comparing the filtration degrees of the source and target, we see that the target always supports one more differential than the source; in particular, the $d_{2^{n+3-j}-1}$-differential above has non-trivial kernel, and hence gives a permanent cycle (it is the last non-trivial differential involving this class).   

  Inductively, we see there is no gap in $\pi_kE_n^{hC_2}$ for $0 \le i \le (2^j-1) \cdot 2^{n+2-j}$. In particular, taking $n =j$, we see there is no gap for $0 \le i \le (2^n-1) \cdot 4$, i.e, from $0 \le i \le 2^{n+2}-4$.  By the $2^{n+2}$-periodicity of $\pi_*E_n^{hC_2}$ we are done. 
\end{proof}
\begin{rem}
  It may be useful to give the following visual guide to identifying these permanent cycles. For $j = 1$ we have a line of slope 1 and length $2^{n+1}-2$ beginning from position $(1,1)$ in the spectral sequence. For $j = 2$ we have a line of slope 1 and length $2^{n}-2$ beginning in position $(2^{n+1}+1,1)$. For $j = 3$ we have a line of slope 1, and length $2^{n-1}-2$ beginning in position $(3 \cdot 2^n +1,1)$, and so on. See \Cref{fig:hfpssadditive} for the case $n = 2$ and \cite[Fig.~7]{hahn_shi} for the case $n = 3$. 
\end{rem}
This gives rise to the following important corollary. 
\begin{cor}\label{cor:recognition}
  Suppose that $X \simeq \Sigma^\ell E_n^{hC_2}$ for some integer $\ell$ (which is only uniquely determined modulo $2^{n+2}$).  If $\pi_jX = \pi_{j+1}X = \pi_{j+2}X = 0$, then $\ell = j+3$. 
\end{cor}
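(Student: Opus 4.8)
The plan is to reduce immediately to \Cref{prop:recognition} by unwinding the suspension. Since $X \simeq \Sigma^\ell E_n^{hC_2}$, there are isomorphisms $\pi_i X \cong \pi_{i-\ell} E_n^{hC_2}$ for all $i$. Applying this with $i = j, j+1, j+2$, the hypothesis $\pi_j X = \pi_{j+1}X = \pi_{j+2}X = 0$ becomes the statement
\[
\pi_{j-\ell}E_n^{hC_2} = \pi_{j-\ell+1}E_n^{hC_2} = \pi_{j-\ell+2}E_n^{hC_2} = 0.
\]

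First I would invoke \Cref{prop:recognition} with $k = j-\ell$: the vanishing of three consecutive homotopy groups of $E_n^{hC_2}$ starting in degree $k$ holds if and only if $k \equiv -3 \pmod{2^{n+2}}$. Hence $j - \ell \equiv -3 \pmod{2^{n+2}}$, equivalently $\ell \equiv j+3 \pmod{2^{n+2}}$.

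Finally, I would note that $\ell$ was only well-defined modulo $2^{n+2}$ to begin with — by the $2^{n+2}$-periodicity of $E_n^{hC_2}$ recorded in the corollary following \Cref{thm:additivess}, the suspensions $\Sigma^{\ell}E_n^{hC_2}$ and $\Sigma^{\ell + 2^{n+2}}E_n^{hC_2}$ are equivalent — so the congruence $\ell \equiv j+3 \pmod{2^{n+2}}$ is exactly the assertion $\ell = j+3$ in the sense intended by the statement.

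There is essentially no obstacle here: the only point requiring a modicum of care is keeping the bookkeeping straight between the two conventions (treating $\ell$ as a literal integer versus as a class modulo $2^{n+2}$), and making sure the "if and only if" in \Cref{prop:recognition} is used in the direction that pins down the congruence class of $k = j - \ell$ rather than merely constraining it. Everything else is a direct substitution.
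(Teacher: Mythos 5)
Your proof is correct and is exactly the argument the paper intends: the corollary is stated without proof as an immediate consequence of \Cref{prop:recognition}, and your unwinding of the suspension isomorphism $\pi_i X \cong \pi_{i-\ell}E_n^{hC_2}$ followed by the "only if" direction of that proposition is the intended derivation. The bookkeeping about $\ell$ being well-defined only modulo $2^{n+2}$ is handled correctly.
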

\subsection{The Picard spectral sequence}
We briefly review the techniques introduced in \cite{ms_pic} which we will use to compute the Picard group of $E_n^{hC_2}$-modules, which we denote $\Pic(E_n^{hC_2})$. We recall that for any $E_{\infty}$-ring spectrum $R$, there exists a connective spectrum $\pics(R)$ with the property that 
\[
\pi_i\pics(R) \cong 
\begin{cases}
  \Pic(R) & i = 0 \\
  \pi_0(R)^{\times} & i = 1 \\
  \pi_{i-1}(R) & i \ge 2. 
\end{cases}
\]
There is a faithful $C_2$-Galois extension $E_n^{hC_2} \to E_n$, see \cite[Prop.~3.6]{hms_pic}, and the techniques of \cite{ms_pic} then apply to show that there is an equivalence of connective spectra
\[
\pics(E_n^{hC_2}) \simeq \tau_{\ge 0}\pics(E_n)^{hC_2},
\]
where $\tau_{\ge 0}$ denotes the connective cover. In particular, there is a Picard spectral sequence 
\[
\E_{2,\times}^{s,t} \cong H^s(C_2,\pi_t\pics(E_n)) \implies \pi_{t-s}\pics(E_n)^{hC_2}
\]
for $s,t \ge 0$, whose abutment for $t = s$ is the Picard group $\Pic(E_n^{hC_2})$. Note that for $t \ge 2$ the $E_2$-term of the Picard spectral sequence is just a shift of the ordinary integer graded homotopy fixed point spectral sequence. We call this the stable range of the spectral sequence. In fact, even more is true; by the comparison tool of Mathew and Stojanoska \cite[5.2.4]{ms_pic}, whenever $2 \le r \le t-1$ we have an equality of differentials $d_{r,\times}^{s,t} = d_{r}^{s,t-1}$, where $d_{r,\times}$ denotes the $r$-th differential in the Picard spectral sequence, and $d_r$ the $r$-th differential in the ordinary homotopy fixed point spectral sequence. 

Since we are interested in terms contributing to $\pi_0\pics(E_n)^{hC_2}$, we should look at classes in the $(-1)$-stem of the additive spectral sequence. By degree reasons these must have the form
\[
 \F_{2^n}[\![u_1,\ldots,u_{n-1}]\!]\bu^{2\ell-1}u_{2\sigma}^{-\ell}a_{\sigma}^{4\ell-1}. 
\]
in bidegree $(-1,4\ell-1	)$, where $\ell \ge 1$. The reader should compare the following to the proof of \cite[Lemma.~6.9]{hahn_shi}.
\begin{prop}
	The only classes in the stable range that can contribute to $\pi_0(\pics(E_n^{hC_2}))$ are in filtration degree $2^k-1$ for $1 \le k \le n$. 
\end{prop}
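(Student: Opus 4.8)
The plan is to show that whenever $\ell\ge 1$ is not a power of $2$, the cyclic $\F_{2^n}[\![\bar u_1,\dots,\bar u_{n-1}]\!]$-module in filtration $4\ell-1$ spanned by $\bar u^{2\ell-1}u_{2\sigma}^{-\ell}a_\sigma^{4\ell-1}$ supports a nonzero differential in the Picard spectral sequence, so that none of it survives. Since $4\ell-1$ has the form $2^k-1$ precisely when $\ell$ is a power of $2$, this already restricts the possible filtrations to the set $\{2^{j+2}-1:j\ge 0\}$, and a boundedness argument at the end will trim this to the stated range.

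First I would reduce to powers of $u_{2\sigma}$. Since $\bar u,\bar u_1,\dots,\bar u_{n-1},a_\sigma$ are permanent cycles by \Cref{thm:additivess}, every differential acts $\F_{2^n}[\![\bar u_i]\!]$-linearly on this module via $d_r\!\left(c\,\bar u^{2\ell-1}u_{2\sigma}^{-\ell}a_\sigma^{4\ell-1}\right)=c\,\bar u^{2\ell-1}a_\sigma^{4\ell-1}\,d_r(u_{2\sigma}^{-\ell})$, so it suffices to differentiate $u_{2\sigma}^{-\ell}$. Writing $\ell=2^{a}m$ with $m$ odd and $a=\nu_2(\ell)$, the relation $2a_\sigma=0$ forces (by the Leibniz rule) all differentials on $u_{2\sigma}^{-\ell}$ of length $<2^{a+2}-1$ to vanish, and differentiating $u_{2\sigma}^{2^{a}}\cdot u_{2\sigma}^{-2^{a}}=1$ together with the formula for $d_{2^{a+2}-1}(u_{2\sigma}^{2^{a}})$ from \Cref{thm:additivess} yields
\[
d_{2^{a+2}-1}\!\left(\bar u^{2\ell-1}u_{2\sigma}^{-\ell}a_\sigma^{4\ell-1}\right)\;=\;\bar u_{a+1}\,\bar u^{\,2\ell-2+2^{a+1}}\,u_{2\sigma}^{-\ell-2^{a}}\,a_\sigma^{\,4\ell-2+2^{a+2}}
\]
(reading $\bar u_{a+1}$ as $1$ when $a+1=n$; signs are irrelevant in characteristic $2$), while if $a\ge n$ all such differentials vanish and $u_{2\sigma}^{-\ell}$ is a permanent cycle. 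The target monomial has $\nu_2(\ell+2^{a})=a+\nu_2(m+1)>a$, so $u_{2\sigma}^{-\ell-2^{a}}$ supports no differential of length $<2^{a+3}-1$, and matching its $\bar u_{a+1}$-factor against the images of the remaining $(-1)$-stem classes under \Cref{thm:additivess} shows it is not a boundary on any earlier page; hence it spans a full copy of $\F_{2^n}[\![\bar u_i]\!]$ on the page $\E_{2^{a+2}-1}$, and the $\F_{2^n}[\![\bar u_i]\!]$-linear map $c\mapsto c\cdot(\text{target})$ is injective because $\F_{2^n}[\![\bar u_i]\!]$ is a domain. So in the homotopy fixed point spectral sequence this differential wipes out the whole module in filtration $4\ell-1$.

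The decisive input is then the Mathew--Stojanoska comparison \cite{ms_pic}: a differential leaving the $0$-line of the Picard spectral sequence in internal degree $t$ coincides with the corresponding homotopy fixed point differential provided $t$ is large enough relative to the differential's length. The class $\bar u^{2\ell-1}u_{2\sigma}^{-\ell}a_\sigma^{4\ell-1}$ has internal degree $t=4\ell-1$, and $t=4\ell-1$ meets this range for $d_{2^{a+2}-1}$ exactly when $m\ge 3$, i.e.\ exactly when $\ell$ is not a power of $2$ — when $\ell=2^{a}$ one has $t=4\ell-1=2^{a+2}-1$, which falls just short. Thus for every $\ell$ that is not a power of $2$ the differential computed above is imported unchanged into the Picard spectral sequence, is nonzero, and is injective on the cyclic module, so that filtration contributes nothing to $\pi_0\pics(E_n^{hC_2})$. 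This leaves only the filtrations $2^{j+2}-1$ with $\ell=2^{j}$: for $0\le j\le n-1$ these are the filtrations $3,7,\dots,2^{n+1}-1$ attached to the $n$ differentials of \Cref{thm:additivess}, where $t=2^{j+2}-1$ is just below the comparison range and the statement asserts nothing further; and for $j\ge n$ the class is a permanent cycle (as $u_{2\sigma}^{-\ell}$ is) of filtration $\ge 2^{n+2}-1$, representing a class of $\pi_{-1}E_n^{hC_2}=0$ (\Cref{prop:recognition}), hence a boundary in the homotopy fixed point spectral sequence and, by the horizontal vanishing line that the $C_2$-Galois extension $E_n^{hC_2}\to E_n$ forces on the Picard spectral sequence together with the comparison of the preceding displays, not a contributor there either. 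Altogether the only filtrations in the stable range that can contribute to $\pi_0\pics(E_n^{hC_2})$ are of the form $2^k-1$ as claimed.

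The main obstacle is the comparison step: one must identify precisely the range of validity of the Mathew--Stojanoska theorem and verify that the resulting numerical inequality separates powers of $2$ from non-powers of $2$, which is the single place where the dichotomy enters; the borderline filtration $2^{n+2}-1$ (the case $j=n$) is the one that needs the comparison pushed to its limit. The Leibniz computation of $d_*(u_{2\sigma}^{-\ell})$ and the verification that the target of the differential is a \emph{full} free $\F_{2^n}[\![\bar u_i]\!]$-module (rather than a proper quotient) are routine given the explicit $\E_\infty$-page of \Cref{thm:additivess}, and the vanishing-line argument disposing of the large filtrations is soft.
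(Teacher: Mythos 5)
Your argument is essentially the paper's own proof, reorganized by the $2$-adic valuation of $\ell$ rather than page by page: you use the same parametrization of the $(-1)$-stem classes, the same Hahn--Shi differentials, and the same Mathew--Stojanoska comparison threshold $r<t$, which singles out exactly the borderline cases $\ell=2^{k-1}$ (filtration $2^{k+1}-1$, $1\le k\le n$) as the paper does. The only imprecision is the claim that the target of $d_{2^{a+2}-1}$ spans a full copy of $\F_{2^n}[\![\bar u_1,\ldots,\bar u_{n-1}]\!]$ on the page $\E_{2^{a+2}-1}$: by that page both source and target have already been reduced to $\F_{2^n}[\![\bar u_{a+1},\ldots,\bar u_{n-1}]\!]$ by the incoming shorter differentials (this is exactly the bookkeeping the paper's proof tracks explicitly), but multiplication by $\bar u_{a+1}$ remains injective on this quotient, so your conclusion stands.
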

\begin{proof}

\noindent (1) The $d_3$-differentials are generated by
\[
d_3(u_{2\sigma}) = \bu_1\bu a_{\sigma}^{3}. 
\]
In particular, if $\ell \equiv 1 \pmod 2$, then 
\[
d_3(\bu^{2\ell-1}u_{2\sigma}^{-\ell}a_{\sigma}^{4\ell-1}) = \bu_1 \bu^{2\ell} u_{2\sigma}^{-\ell-1}a_{\sigma}^{4\ell+2}
\]
and hence all the classes of the form 
\[
\bu^{2\ell-1}u_{2\sigma}^{-\ell}a_{\sigma}^{4\ell-1}, \quad \ell \equiv 1 \pmod 2
\]
die on the $\E_3$-page of the additive spectral sequence. By the comparison tool \cite[5.2.4]{ms_pic} we can import these $d_3$-differentials whenever the classes lie in filtration degree greater than 4, i.e., when $4\ell -1 \ge 4$. We conclude we can import all these $d_3$-differentials except the differential originating in filtration degree 3 (corresponding to $\ell = 1$). 

The classes 
\[
\bu^{2\ell-1}u_{2\sigma}^{-\ell}a_{\sigma}^{4\ell-1}, \quad \ell \equiv 0 \pmod 2
\]
are the targets of the $d_3$-differential
\[
d_3(\bu^{2\ell-2}u_{2\sigma}^{-\ell+1}a_{\sigma}^{4\ell-4})= \bu_1\bu^{2\ell-1}u_{2\sigma}^{-\ell}a_{\sigma}^{4\ell-1}
\]

By the comparison tool we can import a $d_3$ differential whenever the source has $t \ge 4$. Here the source has degree $4\ell-4$, and so we can import these whenever $4\ell-4 \ge 4$, i.e. $\ell \ge 2$. Since $\ell \ge 1$ and $\ell \equiv 0 \pmod 2$, we can import all these differentials. The differential quotients by the principal ideal $(\bu_1)$, and hence the remaining classes in the stable range that can contribute have the form
\[
\F_{2^n}[\![\bu_{2},\ldots,\bu_{n-1}]\!]\bu^{2\ell-1}u_{2\sigma}^{-\ell}a_{\sigma}^{4\ell-1}. 
\]
with $\ell \equiv 0 \pmod 2$ and $\ell \ge 1$. \\

\noindent (2) The $d_7$-differentials are generated by
\[
d_7(u_{2\sigma}^2) = \bu_2\bu^3a_{\sigma}^{7}. 
\]
In particular, if $\ell \equiv 2 \pmod 4$, then 
\[
d_7(\bu^{2\ell-1}u_{2\sigma}^{-\ell}a_{\sigma}^{4\ell-1}) = \bu_2 \bu^{2\ell+2} u_{2\sigma}^{-\ell-2}a_{\sigma}^{4\ell+6}. 
\]
and hence all the classes of the form 
\[
\bu^{2\ell-1}u_{2\sigma}^{-\ell}a_{\sigma}^{4\ell-1}, \quad \ell \equiv 2 \pmod 4
\]
die on the $\E_7$-page of the additive spectral sequence. By the comparison tool we can import these $d_7$-differentials whenever the classes lie in filtration degree greater than 8, i.e., when $4\ell -1 \ge 8$. We conclude we can import all these $d_7$-differentials except the differential originating in filtration degree $7$ (corresponding to $\ell = 2$). 

The remaining classes 
\[
\bu^{2\ell-1}u_{2\sigma}^{-\ell}a_{\sigma}^{4\ell-1}, \quad \ell  \equiv 0 \pmod 4
\] are the targets of the $d_7$-differential
\[
d_7(\bu^{2\ell-4}u_{2\sigma}^{-\ell+2}a_{\sigma}^{4\ell-8}) = \bu_2\bu^{2\ell-1}u_{2\sigma}^{-\ell}a_{\sigma}^{4\ell-1}
\]

 By the comparison tool we can import a $d_7$ differential whenever the source has $t \ge 8$. Here the source has degree $4\ell-8$, and so we can import these whenever $4\ell-8 \ge 8$, i.e. $\ell \ge 4$. Since $\ell \ge 1$ and $\ell \equiv 0 \pmod 4$, we can import all these $d_7$-differentials. The differential quotients by the principal ideal $(\bu_2)$, and hence the remaining classes have the form
\[
\F_{2^n}[\![\bu_3,\ldots,\bu_n]\!]\bu^{2\ell-1}u_{2\sigma}^{-\ell}a_{\sigma}^{4\ell-1}. 
\]
with $\ell \equiv 0 \pmod 4$. \\

\noindent (3) For $0 < k < n$, the $d_{2^{k+1}-1}$-differentials are generated by
\[
d_{2^{k+1}-1}(u_{2\sigma}^{2^{k-1}}) = \bu_k\bu^{2^{k}-1}a_{\sigma}^{2^{k+1}-1}. 
\]
In particular, if $\ell \equiv 2^{k-1} \pmod {2^k}$, then 
\[
d_{2^{k+1}-1}(\bu^{2\ell-1}u_{2\sigma}^{-\ell}a_{\sigma}^{4\ell-1}) = \bu_k\bu^{2\ell+2^{k}-2}u_{2\sigma}^{-\ell-2^{k-1}}a_{\sigma}^{4\ell+2^{k+1}-2}
\]
and hence the classes 
\[\bu^{2\ell-1}u_{2\sigma}^{-\ell}a_{\sigma}^{4\ell-1}, \quad \ell \equiv 2^{k-1} \pmod {2^k}
\] die on the $\E_{2^{k+1}-1}$-page of the additive spectral sequence. By the comparison tool we can import these $d_{2^{k+1}-1}$-differentials whenever the classes lie in filtration degree greater than $2^{k+1}$, i.e., when $4\ell -1 \ge 2^{k+1}$. We conclude we can import all these $d_{2^{k+1}-1}$-differentials except the differential originating in filtration degree $2^{k-1}$ (corresponding to $\ell = 2^{k-1}$). 

The classes of the form
\[
\bu^{2\ell-1}u_{2\sigma}^{-\ell}a_{\sigma}^{4\ell-1}, \quad \ell \equiv 0 \pmod {2^k}.
\] 
are the targets of a $d_{2^{k+1}-1}$-differential
\[
d_{2^{k+1}-1}(\bu^{2\ell -2^k}\bu_{2\sigma}^{\ell+2^{k-1}}a_{\sigma}^{4\ell - 2^{k+1}}) = \bu_k\bu^{2\ell-1}u_{2\sigma}^{-\ell}a_{\sigma}^{4\ell-1}
\] By the comparison tool we can import a $d_{2^{k+1}-1}$-differential whenever the source has $t \ge 2^{k+1}$. Here the source has degree $4\ell-2^{k+1}$, and so we can import these whenever $4\ell-2^{k+1} \ge 2^{k+1}$, i.e. $\ell \ge 2^{k}$. Since $\ell \ge 1$ and $\ell \equiv 0 \pmod {2^k}$, we can import all these $d_{2^{k+1}-1}$-differentials. The differential quotients by the principal ideal $(\bu_k)$, and hence the remaining classes have the form
\[
\F_{2^n}[\![u_{k+1},\ldots,u_n]\!]\bu^{2\ell-1}u_{2\sigma}^{-\ell}a_{\sigma}^{4\ell-1}. 
\]
with $\ell \equiv 0 \pmod {2^k}$. \\
\noindent (4) The $d_{2^{n+1}-1}$-differentials are generated by
\[
d_{2^{n+1}-1}(u_{2\sigma}^{2^{n-1}}) = \bu^{2^{n}-1}a_{\sigma}^{2^{n+1}-1}. 
\]
In particular, if $\ell \equiv 2^{n-1} \pmod {2^n}$, then 
\[
d_{2^{n+1}-1}(\bu^{2\ell-1}u_{2\sigma}^{-\ell}a_{\sigma}^{4\ell-1}) = \bu^{2\ell+2^{n}-2}u_{2\sigma}^{-\ell-2^{n-1}}a_{\sigma}^{4\ell+2^{n+1}-2}
\]
and hence the classes 
\[\bu^{2\ell-1}u_{2\sigma}^{-\ell}a_{\sigma}^{4\ell-1}, \quad \ell \equiv 2^{n-1} \pmod {p^n}\]
 die on the $\E_{2^{n+1}-1}$-page of the additive spectral sequence. By the comparison tool we can import these $d_{2^{n+1}-1}$-differentials whenever the classes lie in filtration degree greater than $2^{n+1}$, i.e., when $4\ell -1 \ge 2^{n+1}$. We conclude we can import all these $d_{2^{n+1}-1}$-differentials except the differential originating in filtration degree $2^n$ (corresponding to $\ell = 2^{n-1}$). 

The classes 
\[
\bu^{2\ell-1}u_{2\sigma}^{-\ell}a_{\sigma}^{4\ell-1}, \quad \ell \equiv 0 \pmod {2^n}
\]are the targets of a $d_{2^{n+1}-1}$-differential,
\[
d_{2^{n+1}-1}(\bu^{2\ell -2^n}\bu_{2\sigma}^{\ell+2^{n-1}}a_{\sigma}^{4\ell - 2^{n+1}}) = \bu^{2\ell-1}u_{2\sigma}^{-\ell}a_{\sigma}^{4\ell-1}.
\] 
By the comparison tool we can import a $d_{2^{n+1}-1}$ differential whenever the source has $t \ge 2^{n+1}$. Here the source has degree $4\ell-2^{n+1}$, and so we can import these whenever $4\ell-2^{n+1} \ge 2^{n+1}$, i.e. $\ell \ge 2^{n}$. Since $\ell \ge 1$ and $\ell \equiv 0 \pmod {2^n}$, we can import all these $d_{2^{n+1}-1}$-differentials. Using (3) above the targets are just copies of $\F_{2^n}$ and hence die after these differentials. 

It follows that the only possible contributions to the Picard spectral sequence in the stable range are those classes in filtration degree $2^{k}-1$ for $1 \le k \le n$ as claimed. 
\end{proof}
We now determine the differentials for these classes in the Picard spectral sequence. 
\begin{prop}\label{prop:stablediffpic}
	In the zero stem, and the stable range of the Picard spectral sequence for $\pics(E_n)^{hC_2}$, there is a group of order at most $2$ in filtration degree $2^{k+1}-1$ for $1 \le k \le n$. 
\end{prop}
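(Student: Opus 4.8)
The plan is to compare, for each $k$ with $1\le k\le n$, the decisive differential in the Picard spectral sequence with the corresponding additive differential via the comparison tool of Mathew--Stojanoska, paying attention to the fact that this differential sits exactly at the edge of the stable range and therefore acquires a quadratic correction.

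First I would pin down the relevant class. By the preceding proposition, the only class in the stable range contributing to $\pi_0\pics(E_n^{hC_2})$ in filtration $2^{k+1}-1$ is the image, at the $\E_{2^{k+1}-1}$-page, of
\[
x_k \;=\; \F_{2^n}[\![\bu_k,\dots,\bu_{n-1}]\!]\cdot m_k,\qquad m_k \;=\; \bu^{\,2^k-1}u_{2\sigma}^{-2^{k-1}}a_\sigma^{2^{k+1}-1},
\]
(for $k=n$ this is just $\F_{2^n}\cdot m_n$), and moreover $x_k$ is untouched by every differential that can be imported from the additive spectral sequence. The additive differential emanating from $x_k$ is $d_{2^{k+1}-1}$, and since it originates in filtration $2^{k+1}-1$ it lies at the edge of the stable range and cannot be imported. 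Here I would invoke the refinement of the comparison tool at the edge of the stable range \cite{ms_pic}: for a class $y$ in filtration $r=2^{k+1}-1$ the Picard differential is the additive one plus the square,
\[
d^{\mathrm{Pic}}_{2^{k+1}-1}(y) \;=\; d^{\mathrm{add}}_{2^{k+1}-1}(y) + y^2 ,
\]
the square being formed in the additive spectral sequence (all classes here are $2$-torsion, so signs play no role). This is exactly the mechanism that produces the extra $\Z/2$ in $\Pic(KO)=\Z/8$ when $n=1$.

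Next I would compute $d^{\mathrm{add}}_{2^{k+1}-1}(m_k)$ directly from \Cref{thm:additivess}. Using that $\bu$, the $\bu_i$ and $a_\sigma$ are permanent cycles together with $d_{2^{k+1}-1}(u_{2\sigma}^{-2^{k-1}}) = -u_{2\sigma}^{-2^k}\,d_{2^{k+1}-1}(u_{2\sigma}^{2^{k-1}})$, one obtains $d^{\mathrm{add}}_{2^{k+1}-1}(m_k)=\bu_k\, m_k^2$ for $1\le k<n$ and $d^{\mathrm{add}}_{2^{n+1}-1}(m_n)=m_n^2$, where $m_k^2=\bu^{\,2^{k+1}-2}u_{2\sigma}^{-2^k}a_\sigma^{2^{k+2}-2}$. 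Combined with the correction formula, and using that every $c\in \F_{2^n}[\![\bu_k,\dots,\bu_{n-1}]\!]$ is a permanent cycle, this gives
\[
d^{\mathrm{Pic}}_{2^{k+1}-1}(c\,m_k)= \bigl(\bu_k c + c^2\bigr)m_k^2 = c(c+\bu_k)\,m_k^2 \quad (k<n), \qquad d^{\mathrm{Pic}}_{2^{n+1}-1}(c\,m_n)= c(c+1)\,m_n^2 .
\]
Since $m_k^2$ is a unit multiple of $a_\sigma^{2^{k+2}-2}$ and multiplication by $a_\sigma$ is injective on the $\F_{2^n}$-span of the permanent-cycle monomials (a property that survives to the $\E_{2^{k+1}-1}$-page), the kernel of $d^{\mathrm{Pic}}_{2^{k+1}-1}$ on $x_k$ is $\{\,c : c(c+\bu_k)=0\,\}=\{0,\bu_k\}\cong \Z/2$ for $k<n$ (using that $\F_{2^n}[\![\bu_k,\dots,\bu_{n-1}]\!]$ is an integral domain), and $\{c\in\F_{2^n}: c^2=c\}=\F_2\cong\Z/2$ for $k=n$. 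Finally, $x_k$ is not the target of any differential in the Picard spectral sequence — there are no imported ones by the preceding proposition, and a short divisibility inspection of the differentials in \Cref{thm:additivess} rules out the remaining (non-importable) possibilities — so the $\E_\infty$-contribution in the zero stem in filtration $2^{k+1}-1$ is a subquotient of this kernel, hence a group of order at most $2$.

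The step I expect to be the real obstacle is justifying the precise shape of the non-additive differential: one must check that filtration $2^{k+1}-1$ lies in the first non-stable regime, so that $y\mapsto y^2$ is the \emph{only} correction to $d^{\mathrm{add}}_{2^{k+1}-1}$ and no further power-operation terms intervene. This is where the bookkeeping of \cite{ms_pic} — relating $\pics(E_n)$ to $\mathfrak{gl}_1(E_n)$ and identifying the relevant quadratic operation — has to be applied with care. The remaining ingredients (injectivity of multiplication by $m_k^2$ on the appropriate page, and the absence of incoming differentials to $x_k$) are routine consequences of \Cref{thm:additivess}.
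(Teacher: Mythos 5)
Your proposal is correct and follows essentially the same route as the paper: both identify the surviving classes $f\,\bu^{2^k-1}u_{2\sigma}^{-2^{k-1}}a_\sigma^{2^{k+1}-1}$ at the edge of the stable range, apply the Mathew--Stojanoska formula $d^{\Pic}_r(x)=d_r(x)+x^2$ on $\E^{r,r}_{r,\Pic}$, and solve $\bu_k f+f^2=0$ (resp.\ $\xi+\xi^2=0$ for $k=n$) to obtain a kernel of order $2$. The extra verifications you flag (injectivity of multiplication by $m_k^2$, absence of incoming differentials) are left implicit in the paper's argument but do not change the approach.
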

\begin{proof}
	We recall from \cite[Thm.~6.1.1]{ms_pic} that the first differential outside the stable range in the Picard spectral sequence is given by
	\begin{equation}\label{eq:picdiff}
d^{\Pic}_r(x) = d_r(x) + x^2, \quad x \in \E^{r,r}_{r,\Pic}
	\end{equation}
  where we abuse notation and denote by $x$ the same class in the Picard and additive homotopy fixed point spectral sequences. 

	Let $f = f(u_k,\ldots,u_{n-1}) \in \F_{2^n}[\![u_k,\ldots,u_{n-1}]\!]$. For $1 \le k \le n-1$ the additive differential that we could not import is given by
	\[
d_{2^{k+1}-1}(f\bu^{2^k-1}u_{2\sigma}^{-2^{k-1}}a_{\sigma}^{2^{k+1}-1}) = f\bu_k\bu^{2^{k+1}-2}u_{2\sigma}^{-2^k}a_{\sigma}^{2^{k+2}-2}. 
	\]
By \eqref{eq:picdiff} the corresponding differential in the Picard spectral sequence is given by
\[
\begin{split}
d^{\Pic}_{2^{k+1}-1}(f\bu^{2^k-1}u_{2\sigma}^{-2^{k-1}}a_{\sigma}^{2^{k+1}-1}) &=  f\bu_k\bu^{2^{k+1}-2}u_{2\sigma}^{-2^k}a_{\sigma}^{2^{k+2}-2} + f^2\bu^{2^{k+1}-2}u_{2\sigma}^{-2^{k}}a_{\sigma}^{2^{k+2}-2} \\
&= (\bu_kf+f^2)\bu^{2^{k+1}-2}u_{2\sigma}^{-2^k}a_{\sigma}^{2^{k+2}-2}.
\end{split}
\]
This is zero whenever $\bu_kf+f^2 = 0$. Since $\bu_kf+f^2 = f(\bu_kf+1)$, we see that there are precisely two solutions, namely $f = 0$ and $f = \bu_k$, and hence the kernel generates a group of order 2. 

The final differential is similar. The additive differential we can not import originates on a copy of $\F_{2^n}$. Explicitly, letting $\xi \in \F_{2^n}$ the additive differential is 
\[
d_{2^{n+1}-1}(\xi \bu^{2^n-1}u_{2\sigma}^{-2^{n-1}}a_{\sigma}^{2^{n+1}-1}) = \xi\bu^{2^{n+1}-2}u_{2\sigma}^{-2^n}a_{\sigma}^{2^{n+2}-2}
\]
The corresponding differential in the Picard spectral sequence is given by 
\[
\begin{split}
d_{2^{n+1}-1}^{\Pic}(\xi \bu^{2^n-1}u_{2\sigma}^{-2^{n-1}}a_{\sigma}^{2^{n+1}-1}) &= \xi \bu^{2^{n+1}-2}u_{2\sigma}^{-2^n}a_{\sigma}^{2^{n+2}-2} + \xi^2 \bu^{2^{n+1}-2}u_{2\sigma}^{-2^{n}}a_{\sigma}^{2^{n+2}-2}\\
&= (\xi + \xi^2)(\bu^{2^{n+1}-2}u_{2\sigma}^{-2^{n}}a_{\sigma}^{2^{n+2}-2}).
\end{split}
\]
This is zero whenever $\xi + \xi^2 = 0$, i.e, when $\xi = 0$ or $\xi = 1$. The kernel is thus $\Z/2$, as claimed. 
\end{proof}

We have now computed the stable range of the Picard spectral sequence. We are left with computing $H^0(C_2,\Pic(E_n))$ and $H^1(C_2,E_0^\times)$. The former is $\Z/2$, as computed by Baker and Richter \cite[Thm.~8.8]{baker_richter}. We thank Achim Krause for explaining the following. 
\begin{lem}\label{lem:h1}
	There is an isomorphism $H^1(C_2,E_0^{\times}) \cong \Z/2$.
\end{lem}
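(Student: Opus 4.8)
The plan is to reduce \Cref{lem:h1} to an elementary statement about the unit group of $E_0$. The first point is that the $C_2$-action on $E_0$ is \emph{trivial}: as recalled in \Cref{sec:chromintro}, the generator $\tau$ of $C_2$ fixes each $u_i$, acts $\mathbb{W}(\F_{2^n})$-linearly, and moves only the powers of $u$, via $\tau_*(u^k) = (-1)^k u^k$. Since $E_0 = \mathbb{W}(\F_{2^n})[\![u_1,\dots,u_{n-1}]\!]$ involves no power of $u$, the action on $E_0$, and hence on $E_0^\times$, is trivial. For a trivial $C_2$-module $A$ one has $H^1(C_2,A) = \Hom(C_2,A) = A[2]$, the subgroup of elements of order dividing $2$; so it suffices to prove $(E_0^\times)[2] \cong \Z/2$.

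The second step computes this $2$-torsion. As $E_0$ is a local domain with maximal ideal $I_n = (2,u_1,\dots,u_{n-1})$ and residue field $\F_{2^n}$, there is a short exact sequence
\[
1 \longrightarrow 1+I_n \longrightarrow E_0^\times \longrightarrow \F_{2^n}^\times \longrightarrow 1 .
\]
The quotient $\F_{2^n}^\times$ has odd order $2^n-1$, so every element of $E_0^\times$ of order dividing $2$ already lies in $1+I_n$. If $x \in I_n$ and $(1+x)^2 = 1$, then $x(x+2) = 0$; since $E_0$ is an integral domain this forces $x=0$ or $x=-2$, i.e.\ $1+x \in \{1,-1\}$. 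Because $E_0$ has characteristic zero, $-1 \neq 1$, so $(E_0^\times)[2] = \{\pm 1\} \cong \Z/2$. Hence $H^1(C_2,E_0^\times) \cong \Z/2$, with generator represented by the cocycle $\tau \mapsto -1$.

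I do not expect a genuine obstacle here. All the real content is the observation that the $C_2$-action on $E_0$ is trivial; after that one only needs the immediate verification --- supplied at once by the fact that $E_0$ is a domain --- that $\pm 1$ exhaust the $2$-torsion units. The one mild subtlety worth flagging is that $-1 = 1+(-2)$ is a genuine order-$2$ element of $1+I_n$ precisely because $2 \in I_n$, which is why the answer is $\Z/2$ rather than the trivial group.
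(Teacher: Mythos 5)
Your proof is correct and follows essentially the same route as the paper: trivial $C_2$-action reduces $H^1$ to the $2$-torsion of $E_0^\times$, which the integral-domain argument identifies as $\{\pm 1\}\cong\Z/2$. The detour through $1+I_n$ and the residue field is harmless but unnecessary, since the equation $x^2=1$ already forces $x=\pm 1$ in any domain; otherwise the two arguments coincide.
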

\begin{proof}
	The usual periodic resolution shows that for any abelian group $A$, the first group cohomology $H^1(C_2,A)$, where $C_2$ acts trivially on $A$, is given by the 2-torsion elements of $A$. A simple computation shows that in any integral domain $R$, the only non-trivial element with multiplicative order $2$ is $-1$. Indeed, suppose $x$ is such an element, then $(x-1)(x+1) = x^2 - 1 = 0$, so that $x = 1$ or $x = -1$. In particular, in the multiplicative group of units $R^{\times}$, the only non-trivial 2-torsion element is $-1$, so that $H^*(C_2,R^\times) \cong \Z/2$ generated by $-1$. The lemma then follows by taking $R = E_0$. 
\end{proof}
Putting this altogether, we obtain the calculation of $\Pic(E_n^{hC_2})$. 
\begin{thm}\label{thm:picardcalc}
	At the prime 2, and for any height $n$, there is an isomorphism
	\[
\Pic(E_n^{hC_2}) \cong \Z/2^{n+2},
	\]
	generated by $\Sigma E_n^{hC_2}$. 
\end{thm}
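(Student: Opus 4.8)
The strategy is to pin down $\Pic(E_n^{hC_2})$ by combining an upper bound read off from the Picard spectral sequence with the lower bound forced by periodicity.

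For the upper bound, recall that $\Pic(E_n^{hC_2}) \cong \pi_0 \pics(E_n^{hC_2}) \cong \pi_0 \pics(E_n)^{hC_2}$, so it carries the finite filtration coming from the Picard spectral sequence $\E_{2,\times}^{s,t} = H^s(C_2,\pi_t\pics(E_n))$, and its order is the product of the orders of the $\E_\infty^{s,s}$. I would tabulate these by filtration. Outside the stable range only $s = 0$ and $s = 1$ meet total degree zero: in filtration $0$ we have $H^0(C_2,\Pic(E_n)) \cong \Z/2$ by Baker--Richter \cite[Thm.~8.8]{baker_richter}, and in filtration $1$ we have $H^1(C_2,E_0^{\times}) \cong \Z/2$ by \Cref{lem:h1}; for degree reasons no differential can land in either of these spots (a differential into total degree zero and filtration $0$ or $1$ would need $s+r \le 1$ with $r\ge 2$), so the surviving groups are subquotients of $\Z/2$. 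Inside the stable range, the proposition preceding \Cref{prop:stablediffpic} shows that the only filtrations with nonzero $\E_2$ in the zero stem are $s = 2^{k+1}-1$ for $1 \le k \le n$, and \Cref{prop:stablediffpic} shows each such $\E_\infty^{s,s}$ has order at most $2$. Multiplying, $|\Pic(E_n^{hC_2})| \le 2\cdot 2\cdot 2^n = 2^{n+2}$.

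For the lower bound, $\Sigma E_n^{hC_2}$ is invertible, and the invertible permanent cycle $\bu^{2^{n+1}}u_{2\sigma}^{2^n}$ of \Cref{thm:additivess} gives an equivalence $\Sigma^{2^{n+2}}E_n^{hC_2} \simeq E_n^{hC_2}$, so the class $[\Sigma E_n^{hC_2}]$ has order dividing $2^{n+2}$. Conversely, if $\Sigma^{\ell}E_n^{hC_2} \simeq E_n^{hC_2}$ then, applying \Cref{cor:recognition} to $X = E_n^{hC_2}$ with $j = -3$ (legitimate since $\pi_{-3}E_n^{hC_2}=\pi_{-2}E_n^{hC_2}=\pi_{-1}E_n^{hC_2}=0$ by \Cref{prop:recognition}), one forces $\ell \equiv 0 \pmod{2^{n+2}}$. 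Hence $[\Sigma E_n^{hC_2}]$ generates a cyclic subgroup of exact order $2^{n+2}$; since the whole group has order at most $2^{n+2}$, this subgroup exhausts $\Pic(E_n^{hC_2})$, giving $\Pic(E_n^{hC_2}) \cong \Z/2^{n+2}$ generated by $\Sigma E_n^{hC_2}$. Note that no hidden extension needs resolving: a group of order at most $2^{n+2}$ containing a cyclic subgroup of order $2^{n+2}$ is that cyclic group.

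The substantial work has in fact already been carried out in the earlier propositions --- bounding the stable-range contributions and evaluating the Picard differentials $d_r^{\Pic}(x) = d_r(x) + x^2$ via the Mathew--Stojanoska comparison tool. So I expect the remaining obstacle here to be merely careful bookkeeping: confirming that the Picard spectral sequence converges strongly to $\pi_0\pics(E_n)^{hC_2}$, that nothing enlarges the listed $\E_\infty$-terms, and that the two unstable filtrations receive no differentials --- all of which is a degree count. Everything else is formal from periodicity together with the recognition principle of \Cref{prop:recognition}.
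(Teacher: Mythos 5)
Your proposal is correct and follows essentially the same route as the paper: the upper bound $2^{n+2}$ is assembled from $H^0(C_2,\Pic(E_n))\cong\Z/2$, $H^1(C_2,E_0^\times)\cong\Z/2$, and the $n$ groups of order at most $2$ in the stable range established by \Cref{prop:stablediffpic}, while the lower bound comes from the $2^{n+2}$-periodicity. Your extra care in using \Cref{cor:recognition} to show the class of $\Sigma E_n^{hC_2}$ has exact order $2^{n+2}$, and in checking that no differentials enter filtrations $0$ and $1$, only makes explicit what the paper leaves implicit.
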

\begin{proof}
	Recall that $E_n^{hC_2}$ is $2^{n+2}$-periodic, so that this is a minimum bound on the order of the Picard group. On the other hand, we have computed that we have at most:
	\begin{itemize}
		\item A group of order $2$ in filtration degree 0. 
		\item A group of order $2$ in filtration degree 1. 
		\item A group of order $2$ in filtration degrees $2^k-1$ for $1 \le k \le n$. 
	\end{itemize}
	Together we see that we have a group of order at most $2^{n+2}$. Since this is also a lower bound, we conclude that $\Pic(E_n^{hC_2}) \cong \Z/(2^{n+2})$ generated by $\Sigma E_n^{hC_2}$. 
\end{proof}
We demonstrate the Picard spectral sequence for $n=2$ in \Cref{fig:picss}. 
\begin{rem}
  The previous theorem applies the following result. Let $\mathbb{S}_n  = \Aut(\Gamma_n)$ be the group of automorphisms of the Honda formal group law, so that $\mathbb{G}_n = \mathbb{S}_n \rtimes \Gal(\F_{p^n}/\F_p)$. 
   If $F \subset \G_n$ is a finite subgroup of the Morava stabilizer group such that $F \cap \mathbb{S}_n$ has 2-Sylow subgroup isomorphic to $C_2$, then $\Pic(E_n^{hF})$ is cyclic, generated by the suspension $\Sigma E_n^{hF}$. This follows from \Cref{thm:picardcalc} by applying \cite[Propositions 3.10 and 3.11]{hms_pic}. 
\end{rem}
\begin{figure}[hbt!]
\begin{center}
\subfloat{\scalebox{.8}{\includegraphics{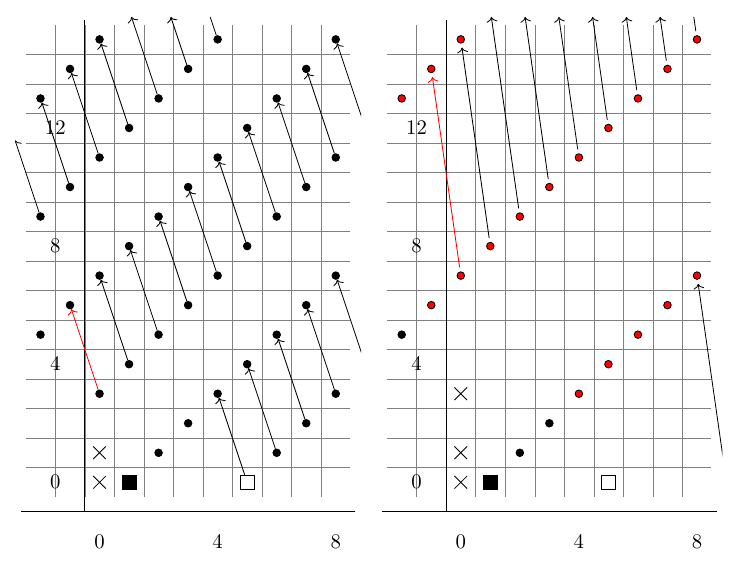}}}\\
\subfloat{\scalebox{.8}{\includegraphics{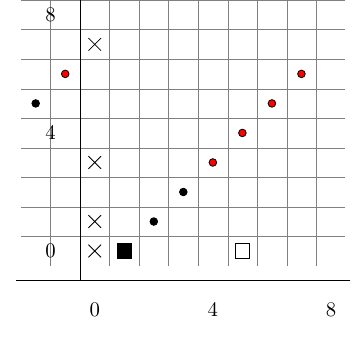}}}
  \caption{The $\E_3$, $\E_7$, and $\E_{\infty}$ pages for the Picard spectral sequence of $\pics(E_2^{hC_2})$ (some low dimensional non-contributing classes have been omitted). Here the squares denote copies of $\mathbb{W}(\F_4)[\![u_1]\!]$, black circles denote $\F_4[\![u_1]\!]$, red circles $\F_4$, crosses $\Z/2$, and black boxes $(E_0)^{\times}$. The differentials shown in red are those that cannot be imported from the additive spectral sequence.}\label{fig:picss}
  \end{center}
\end{figure}
\section{The Gross--Hopkins dual of \texorpdfstring{$E_n^{hC_2}$}{EnhC2} - first proof}
In this section we calculate the Gross--Hopkins dual $IE_n^{hC_2}$. We begin with an analysis of the Morava module of $E_n^{hF}$ for any finite subgroup $F \subseteq \G_n$. We then show that $E_n^{hC_2}$ is self-dual up to a shift. By \Cref{cor:recognition} to determine this shift we must find the gap in the homotopy groups of $IE_n^{hC_2}$. We do this by showing that a certain $C_2$-equivariant spectrum is strongly even. 
\subsection{The Morava module of \texorpdfstring{$IE_n^{hF}$}{IEnhF}}

In this section we calculate $\mE IE_n^{hF}$ as a Morava module, at any height $n$ and for any finite subgroup $F \subset \G_n$. We will not actually use this in our main computation, but include it as it may be of independent interest. In fact, under some conditions, one can use this to determine that $IE_n^{hF}$ is self-dual up to some suspension (as well as giving some control over what that suspension is), see \Cref{cor:dualmod4} below. 

To begin, we need the following results regarding homotopy fixed points and orbits in the $K(n)$-local category. Recall that the $K(n)$-local category has colimits, and they are given by taking the colimit in spectra, and then applying $K(n)$-localization again.
\begin{lem}\label{lem:groups}
Let $G$ be a finite group, and let $X$ be a spectrum with a $G$-action. 
  \begin{enumerate}
    \item There is an equivalence $L_{K(n)}(X_{hG}) \simeq L_{K(n)}((L_{K(n)}X)_{hG})$.
    \item If $X$ is $E_n$-local, then there is an equivalence $L_{K(n)}(X^{hG}) \simeq (L_{K(n)}X)^{hG}$.
    \item If $X$ is $E_n$-local, then the norm map
    \[
(L_{K(n)}X)_{hG} \to (L_{K(n)}X)^{hG}
    \]
    induces equivalences $L_{K(n)}(X_{hG}) \simeq (L_{K(n)}X)^{hG} \simeq L_{K(n)}(X^{hG})$. 
  \end{enumerate}
\end{lem}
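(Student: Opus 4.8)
The plan is to deduce all three parts from the single non-formal input that $K(n)$-local Tate constructions vanish; everything else is manipulation of the norm cofiber sequence and the universal property of $L_{K(n)}$. Part (1) needs no hypothesis on $X$: homotopy orbits are a colimit (the colimit over $BG$), and $K(n)\otimes(-)$ preserves colimits, so $K(n)\otimes X_{hG}\simeq (K(n)\otimes X)_{hG}$ naturally in $X$. Applying this to the localization map $X\to L_{K(n)}X$, which is by construction a $K(n)$-equivalence, shows $X_{hG}\to (L_{K(n)}X)_{hG}$ is again a $K(n)$-equivalence, and localizing both sides gives (1).

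For (2) and (3) I would use the norm cofiber sequence $X_{hG}\to X^{hG}\to X^{tG}$ together with the fact that $L_{K(n)}(Y^{tG})\simeq 0$ whenever $Y$ is $E_n$-local. Granting this, apply $L_{K(n)}$ to the norm sequence for $X$: since $L_{K(n)}(X^{tG})\simeq 0$ we get an equivalence $L_{K(n)}(X_{hG})\xrightarrow{\simeq} L_{K(n)}(X^{hG})$. By (1) the source is $L_{K(n)}\big((L_{K(n)}X)_{hG}\big)$; since $L_{K(n)}X$ is $K(n)$-local its Tate construction also vanishes, so the norm map $(L_{K(n)}X)_{hG}\to (L_{K(n)}X)^{hG}$ is a $K(n)$-equivalence, and its target is already $K(n)$-local because the $K(n)$-local category is closed under limits. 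Hence $L_{K(n)}\big((L_{K(n)}X)_{hG}\big)\simeq (L_{K(n)}X)^{hG}$, and chaining the equivalences produces
\[
L_{K(n)}(X_{hG})\ \simeq\ (L_{K(n)}X)^{hG}\ \simeq\ L_{K(n)}(X^{hG}),
\]
which is (3); the middle equivalence is precisely $L_{K(n)}$ of the norm map, so the chain is compatible with the norm map as stated. Reading off the outer terms of this chain (or directly: the functorial map $X^{hG}\to (L_{K(n)}X)^{hG}$ is a $K(n)$-equivalence with $K(n)$-local target) gives (2).

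The one ingredient requiring care is the Tate-vanishing statement, so that is where I expect the actual work to be. For $Y$ in the $K(n)$-local category this is the theorem of Greenlees--Sadofsky \cite{gs_ate}. To get it for a merely $E_n$-local $Y$, I would reduce to that case via the chromatic fracture square: since $Y=L_nY$, the fiber of $Y\to L_{K(n)}Y$ is identified with the fiber of $L_{n-1}Y\to L_{n-1}L_{K(n)}Y$, which is $L_{n-1}$-local; because $L_{n-1}$-localization is smashing, its homotopy orbits, homotopy fixed points, and hence Tate construction remain $L_{n-1}$-local and therefore $K(n)$-acyclic, so $L_{K(n)}(Y^{tG})\simeq L_{K(n)}\big((L_{K(n)}Y)^{tG}\big)\simeq 0$. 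Everything else is bookkeeping; the only subtlety to watch is confirming that the three equivalences in the chain are realized by the specific maps named in the statement — functoriality of $(-)_{hG}$ and $(-)^{hG}$ applied to $X\to L_{K(n)}X$, and the norm map for $L_{K(n)}X$ — rather than by abstract equivalences.
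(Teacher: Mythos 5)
Your proof is correct, and for parts (2)--(3) it takes a genuinely different route from the paper. Part (1) is essentially the paper's argument: homotopy orbits are a colimit over $BG$ and $L_{K(n)}$ (equivalently $K(n)\otimes-$) preserves colimits up to re-localization. For (2), however, the paper does not use Tate vanishing at all: it cites the result of Behrens--Davis \cite{behrens_davis} that $L_{K(n)}$ commutes with homotopy limits of diagrams of $E_n$-local spectra, which applied to $X^{hG}=\lim_G X$ gives (2) in one line; (3) then follows by applying Greenlees--Sadofsky only to the already $K(n)$-local spectrum $L_{K(n)}X$ and combining with (1) and (2). You instead bypass the limit-commutation result and deduce (2) and (3) together from the norm cofiber sequence, which forces you to extend the Tate-vanishing statement from $K(n)$-local to merely $E_n$-local input; your fracture-square argument for that extension is sound (the fiber of $Y\to L_{K(n)}Y$ is $L_{n-1}$-local by the chromatic fracture square, $L_{n-1}$ is smashing so the orbits, fixed points, and hence Tate construction of that fiber remain $L_{n-1}$-local and thus $K(n)$-acyclic, and $(-)^{tG}$ is exact). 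The trade-off is that the paper outsources the nontrivial analytic input to a citation about limits, while your route keeps everything inside the norm-sequence formalism at the cost of the extra chromatic step; your final worry about whether the equivalences are realized by the named maps is resolved by the naturality of the norm square for $X\to L_{K(n)}X$.
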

\begin{proof}
  \begin{enumerate}
    \item Recall that $X_{hG}$ can be defined as the colimit $X_{hG} = \colim_G X$. The result then follows from the fact that $K(n)$-localization is left adjoint to the inclusion of $K(n)$-local spectra in all spectra, and the description of colimits in the $K(n)$-local category above. 
    \item By \cite[Cor.~6.1.3]{behrens_davis}, for any diagram $\{ X_i \}$ of $E_n$-local spectra, there is an equivalence $L_{K(n)}(\lim X_i) \simeq \lim (L_{K(n)}X_i)$. The result follows from the observation that $X^{hG}$ is defined as the limit $X^{hG} = \lim_G X$. 
    \item It is a consequence of \cite{gs_ate} that if $Y$ is a $K(n)$-local spectrum, then the norm map
    \[
Y_{hG} \to Y^{hG}
    \] 
is a $K(n)$-local equivalence. Taking $Y = L_{K(n)}X$ and applying (1) and (2) then gives the result. 
  \end{enumerate}
\end{proof}
\begin{lem}\label{lem:moravamoduleen}
  There is an equivalence of Morava modules 
  \[
  \mE(IE_n) \simeq \Sigma^{-n}\map_{cts}(\mathbb{G}_n,(E_n)_*\langle \det \rangle) \simeq \Sigma^{-n}\mE E\langle \det \rangle.
  \]

\end{lem}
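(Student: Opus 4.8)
The plan is to combine the two results of Strickland already recalled — that $\mE I \cong \Sigma^{n^2-n}E_*\langle\det\rangle$ (\Cref{thm:moravabc}) and that $\pi_*IE_n\cong\Sigma^{-n}E_*\langle\det\rangle$ (\eqref{eq:detinen}) — with the Devinatz--Hopkins identification $\mE E_n\cong\map_{cts}(\G_n,E_*)$. The starting observation is that $IE_n=F(E_n,I)$ is naturally a $K(n)$-local $E_n$-module carrying a compatible twisted (semilinear) $\G_n$-action, induced from the $\G_n$-action on the source copy of $E_n$, and that by \eqref{eq:detinen} its homotopy, \emph{as a twisted $E_*$-module with $\G_n$-action}, is free of rank one on a shift: $\pi_*IE_n\cong\Sigma^{-n}E_*\langle\det\rangle$. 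It is essential to retain the $\G_n$-action here: forgetting it, $\pi_*IE_n$ is merely $\Sigma^{-n}E_*$ and $IE_n\simeq\Sigma^{-n}E_n$ as plain $E_n$-modules, so the $\det$-twist is precisely the content of the equivariance encoded by \eqref{eq:detinen}.

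First I would record the general principle that for a $K(n)$-local $E_n$-module $M$ carrying a compatible $\G_n$-action there is a natural isomorphism of Morava modules
\[
\mE M := \pi_* L_{K(n)}(E_n\otimes M)\ \cong\ \map_{cts}(\G_n,\pi_*M),
\]
where the right-hand side carries the diagonal $\G_n$-action built from the action on $\pi_*M$. This is the same argument underlying $\mE E_n\cong\map_{cts}(\G_n,E_*)$ \cite{dh_04}: write $E_n\otimes M\simeq (E_n\otimes E_n)\otimes_{E_n}M$, invoke the standard equivalence $L_{K(n)}(E_n\otimes E_n)\simeq\map_{cts}(\G_n,E_n)$ of $E_n$-bimodules, and move $-\otimes_{E_n}M$ through the (co)limit defining $\map_{cts}(\G_n,-)$ — which is legitimate since $IE_n$ is dualizable over $E_n$, $\pi_*IE_n$ being free of rank one, so no convergence issue arises — and finally use $\pi_*\map_{cts}(\G_n,M)=\map_{cts}(\G_n,\pi_*M)$ as $\G_n$ is profinite. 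Applying this with $M=IE_n$ and $\pi_*M\cong\Sigma^{-n}E_*\langle\det\rangle$ gives
\[
\mE(IE_n)\ \cong\ \map_{cts}(\G_n,\Sigma^{-n}E_*\langle\det\rangle)\ =\ \Sigma^{-n}\map_{cts}(\G_n,E_*\langle\det\rangle),
\]
which is the first claimed equivalence. For the second, I would compare notation: under $\mE E_n\cong\map_{cts}(\G_n,E_*)$ with its diagonal action $(g\phi)(x)=g(\phi(g^{-1}x))$, the module $\mE E\langle\det\rangle$ — the same underlying module with the whole action multiplied by $\det$ — agrees with $\map_{cts}(\G_n,E_*\langle\det\rangle)$, because $\det\colon\G_n\to\Z_p^\times\subset E_0^\times$ is a homomorphism, so the ``inner'' twist on the target $E_*$ and the ``outer'' twist on the whole module literally coincide once the formulas are unwound. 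Hence $\Sigma^{-n}\map_{cts}(\G_n,E_*\langle\det\rangle)\cong\Sigma^{-n}\mE E\langle\det\rangle$.

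I expect the only genuine obstacle to be the bookkeeping of the twists: identifying the diagonal $\G_n$-action produced by the general principle and verifying it carries the $\det$-twist correctly through all the identifications, together with the (purely expository) care needed to pin down which of the several equivalent standard conventions for Morava modules and for $L_{K(n)}(E_n\otimes E_n)\simeq\map_{cts}(\G_n,E_n)$ one has fixed. An alternative, superficially twist-free route is to note that $I$ is $K(n)$-locally invertible, so $IE_n=F(E_n,I)\simeq F(E_n,L_{K(n)}S^0)\otimes I$ and hence $\mE(IE_n)\cong \mE F(E_n,L_{K(n)}S^0)\hotimes_{E_*}\mE I$ by a K\"unneth argument; but this merely relocates the work into identifying $\mE F(E_n,L_{K(n)}S^0)$ (which must then come out to $\Sigma^{-n^2}\map_{cts}(\G_n,E_*)$, consistently with $\mE I\cong\Sigma^{n^2-n}E_*\langle\det\rangle$), so I would keep the module-theoretic argument above as the main line.
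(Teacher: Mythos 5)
Your argument is correct, and it reaches the same continuous-maps description as the paper, but by a different decomposition. The paper's proof never touches $\pi_*IE_n$ directly: it uses dualizability of $I$ and Strickland's self-duality $DE_n \simeq \Sigma^{-n^2}E_n$ to rewrite $\mE(IE_n) \cong \Sigma^{-n^2}\mE(E_n \otimes I)$, then applies \cite[Prop.~2.4]{ghmr} (whose evenness hypothesis on $K_*I$ is supplied by \Cref{thm:moravabc}) to get $\mE(E_n\otimes I)\cong\map_{cts}(\G_n,\mE I)$, and finally substitutes $\mE I\cong\Sigma^{n^2-n}E_*\langle\det\rangle$. You instead feed Strickland's other computation, $\pi_*IE_n\cong\Sigma^{-n}E_*\langle\det\rangle$ from \eqref{eq:detinen}, into a base-change principle $\mE M\cong\map_{cts}(\G_n,\pi_*M)$ for suitably finite $\G_n$-equivariant $E_n$-modules $M$. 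Since \eqref{eq:detinen} is itself deduced from \Cref{thm:moravabc} together with $DE_n\simeq\Sigma^{-n^2}E_n$, the two proofs consume the same underlying input; yours is marginally shorter because the $n^2$-shifts have already cancelled inside \eqref{eq:detinen}, while the paper's has the advantage of citing an off-the-shelf statement rather than proving a bimodule base-change lemma. Two points deserve emphasis in your write-up. First, your ``general principle'' is false for arbitrary $K(n)$-local $E_n$-modules with $\G_n$-action: commuting $-\otimes_{E_n}M$ past the colimit $L_{K(n)}(E_n\otimes E_n)\simeq\colim_i F((\G_n/U_i)_+,E_n)$ and then computing homotopy after re-localizing requires a pro-freeness (or evenness mod $I_n$) hypothesis on $\pi_*M$ --- this is precisely the role of the hypothesis in \cite[Prop.~2.4]{ghmr}. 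You flag this, and it is harmless here because $IE_n\simeq\Sigma^{-n}E_n$ as a plain $E_n$-module, so the computation reduces to the known case $\mE E_n\cong\map_{cts}(\G_n,E_*)$ with only the action left to track. Second, the whole argument hinges on \eqref{eq:detinen} being an isomorphism of $E_*$-modules \emph{with their $\G_n$-actions}, not merely of $E_*$-modules; this is what Strickland proves and what the paper uses elsewhere (e.g.\ in \Cref{lem:c2det}), but it is the load-bearing point and you are right to call it out explicitly. Your closing identification $\map_{cts}(\G_n,E_*)\langle\det\rangle\cong\map_{cts}(\G_n,E_*\langle\det\rangle)$ agrees with the last step of the paper's proof.
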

\begin{proof}
  Since $I$ is dualizable in the $K(n)$-local category (see the proof of Prop.~17 of \cite{strickland_gross}), we have that $IE_n \simeq L_{K(n)}(DE_n \otimes I)$, where $DE_n = F(E_n,L_{K(n)}S^0)$ denotes the $K(n)$-local Spanier--Whitehead dual. It follows that $\mE(IE_n) \simeq \mE(DE_n \otimes I) \simeq \Sigma^{-n^2}\mE(E \otimes I)$, where we have used the equivalence $DE_n \simeq \Sigma^{-n^2}E_n$ \cite[Prop.~16]{strickland_gross}. Note that this equivalence is $\mathbb{G}_n$-equivariant in the homotopy category, and hence that this is an isomorphism of Morava modules. 

  By \Cref{thm:moravabc} we have $\mE I \cong \Sigma^{n^2-n}(E_n)_*\langle \det \rangle$. It follows from \cite[Prop.~8.4]{hs_99} that (as a $K_*$-module) $K_*I \cong \mE I/(p,u_1,\ldots,u_{n-1})  \cong \Sigma^{n^2-n}K_*$ is concentrated in even degrees. By \cite[Prop.~2.4]{ghmr} we deduce that there is an isomorphism of Morava modules $\mE(E \otimes I) \cong \map_{cts}(\G_n,\mE I)$. Together with \Cref{thm:moravabc} we see that 
  \[
\mE(IE_n) \cong \Sigma^{-n^2}\map_{cts}(\G_n,\mE I) \cong \Sigma^{-n}\map_{cts}(\G_n,(E_n)_*\langle \det \rangle).
  \]
  Finally, since $\mE E \cong \map_{cts}(\G_n,(E_n)_*)$, we see that $\mE E\langle \det \rangle \cong \map_{cts}(\G_n,(E_n)_*)\langle \det \rangle$, which is equivalent to $\map_{cts}(\G_n,(E_n)_*\langle \det \rangle)$, where $\map_{cts}(\G_n,(E_n)_*)$ is always given the diagonal $\G_n$-action. 
  \end{proof}
\begin{prop}\label{prop:moravamodulefinite}
  For any finite subgroup $F \subset \mathbb{G}_n$, there is an equivalence of Morava modules 
  \[
\mE(IE_n^{hF}) \cong \Sigma^{-n}\map_{cts}(\mathbb{G}_n/F,(E_n)_*\langle \det \rangle) \cong \Sigma^{-n} (\mE E_n^{hF})\langle \det \rangle
  \]
\end{prop}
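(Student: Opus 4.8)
The plan is to run the same argument as in the proof of \Cref{lem:moravamoduleen}, with $\G_n$ replaced by $\G_n/F$ and with the Devinatz--Hopkins identification $\mE(E_n^{hF}) \cong \map_{cts}(\G_n/F,E_*)$ in place of $\mE E_n \cong \map_{cts}(\G_n,E_*)$. First I would reduce to an expression built from $E_n^{hF}$ and $I$: since $E_n^{tF} \simeq \ast$ in the $K(n)$-local category, \Cref{lem:groups}(3) (with $X = E_n$) gives $E_n^{hF} \simeq (E_n)_{hF}$, while the remark preceding \eqref{eq:detinen} gives $IE_n^{hF} \simeq (IE_n)^{hF}$. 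Because $I$ is dualizable in the $K(n)$-local category, for any $X$ we have $F(X,I) \simeq F(X,L_{K(n)}S^0) \otimes I = D(X) \otimes I$; applying this to $X = E_n^{hF}$ yields $IE_n^{hF} \simeq L_{K(n)}(D(E_n^{hF}) \otimes I)$.

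Next I would identify $D(E_n^{hF})$. Writing $E_n^{hF} \simeq (E_n)_{hF} = \colim_{BF} E_n$ as a $K(n)$-local colimit and using that $D = F(-,L_{K(n)}S^0)$ sends $K(n)$-local colimits to limits, one gets $D(E_n^{hF}) \simeq (DE_n)^{hF}$; by the $\G_n$-equivariant equivalence $DE_n \simeq \Sigma^{-n^2}E_n$ recalled in the proof of \Cref{lem:moravamoduleen} (with $F$ acting throughout by restriction of the $\G_n$-action, and $\Sigma$ commuting with $(-)^{hF}$), this gives $D(E_n^{hF}) \simeq \Sigma^{-n^2}E_n^{hF}$, and hence $IE_n^{hF} \simeq \Sigma^{-n^2}L_{K(n)}(E_n^{hF} \otimes I)$.

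It then remains to compute $\mE(E_n^{hF} \otimes I)$. Since $I$ is $K(n)$-locally invertible, $\mE I$ is an invertible (hence flat, indeed free of rank one up to a shift) $E_*$-module, and the standard identity $E_n \otimes X \otimes I \simeq (E_n \otimes X) \otimes_{E_n} (E_n \otimes I)$ together with $K(n)$-localization gives a $\G_n$-equivariant Künneth isomorphism $\mE(E_n^{hF} \otimes I) \cong \mE(E_n^{hF}) \otimes_{E_*} \mE I$ for the diagonal actions (this is the analogue of the step $\mE(E_n \otimes I) \cong \map_{cts}(\G_n,\mE I)$ in \Cref{lem:moravamoduleen}, now using invertibility of $I$ in place of the evenness criterion of \cite[Prop.~2.4]{ghmr}). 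By \Cref{thm:moravabc}, $\mE I \cong \Sigma^{n^2-n}E_*\langle \det \rangle$, and by \cite[Prop.~6.3]{dh_04}, $\mE(E_n^{hF}) \cong \map_{cts}(\G_n/F,E_*)$, so
\[
\mE(E_n^{hF} \otimes I) \cong \map_{cts}(\G_n/F,E_*) \otimes_{E_*} \Sigma^{n^2-n}E_*\langle \det \rangle \cong \Sigma^{n^2-n}\map_{cts}(\G_n/F,E_*\langle \det \rangle),
\]
where the last isomorphism moves the determinant twist into the target of the mapping space (it uses only that $\det$ is a homomorphism and that $\G_n$ acts diagonally on both sides). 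Combining, $\mE(IE_n^{hF}) \cong \Sigma^{-n^2}\mE(E_n^{hF} \otimes I) \cong \Sigma^{-n}\map_{cts}(\G_n/F,E_*\langle \det \rangle) \cong \Sigma^{-n}(\mE E_n^{hF})\langle \det \rangle$, as claimed.

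I expect the main obstacle to be equivariance bookkeeping rather than any single deep input: one must check that $DE_n \simeq \Sigma^{-n^2}E_n$ can be taken $\G_n$-equivariantly (so that the identification of $D(E_n^{hF})$ is one of spectra with residual $\G_n$-action), that the Künneth isomorphism respects the diagonal $\G_n$-actions, and that $\map_{cts}(\G_n/F,E_*)\langle \det \rangle \cong \map_{cts}(\G_n/F,E_*\langle \det \rangle)$ as Morava modules. A shorter alternative, bypassing the dualizability manipulations, is to combine the remark $IE_n^{hF} \simeq (IE_n)^{hF}$ with \Cref{lem:moravamoduleen} directly: restricted along $F \subset \G_n$, the module $\map_{cts}(\G_n,E_*\langle \det \rangle)$ is induced (choose a continuous section of $\G_n \to \G_n/F$, which exists since $F$ is finite and $\G_n$ is profinite), so the descent spectral sequence $H^s(F,\mE(IE_n)) \Rightarrow \mE((IE_n)^{hF})$ collapses onto $H^0(F,\mE(IE_n)) \cong \Sigma^{-n}\map_{cts}(\G_n/F,E_*\langle \det \rangle)$.
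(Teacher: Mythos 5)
Your primary route has a genuine gap at the step $D(E_n^{hF}) \simeq (DE_n)^{hF} \simeq \Sigma^{-n^2}E_n^{hF}$. The first equivalence is fine, but the second requires the self-duality $DE_n \simeq \Sigma^{-n^2}E_n$ to be $F$-equivariant \emph{coherently}, i.e.\ as an equivalence of spectra with $F$-action, whereas \cite[Prop.~16]{strickland_gross} only provides an equivalence that is $\G_n$-equivariant in the homotopy category. That weaker statement suffices in the proof of \Cref{lem:moravamoduleen}, where one only takes homotopy groups and so only needs an isomorphism of Morava modules, but it does not allow you to pass to homotopy fixed points: an equivalence commuting with the action up to (non-coherent) homotopy need not induce an equivalence on $(-)^{hF}$. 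Indeed, for $F = C_2$ the assertion $DE_n^{hC_2} \simeq \Sigma^{-n^2}E_n^{hC_2}$ is precisely \Cref{conj1}, which the paper treats as a deep forthcoming theorem of Beaudry--Goerss--Hopkins--Stojanoska and uses as external input only in \Cref{sec:exoticpic}; it cannot be dispatched as ``equivariance bookkeeping''. (Your K\"unneth step, by contrast, is unproblematic: since $\mE I$ is free of rank one over $E_*$ up to shift and twist, $\mE(X \otimes I) \cong \mE X \otimes_{E_*} \mE I$ does hold.)

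Your ``shorter alternative'' is essentially the paper's own proof and repairs the gap. The paper runs the homotopy fixed point spectral sequence $H^*(F,\mE(IE_n)) \Rightarrow \pi_*(L_{K(n)}(E_n \otimes IE_n))^{hF}$, identifies the abutment with $\mE(IE_n^{hF})$ using \Cref{lem:groups} and the $K(n)$-local vanishing of the Tate construction, and then uses that $\map_{cts}(\G_n, E_*\langle \det \rangle)$ is (co)induced over the finite subgroup $F$ --- exactly your continuous-section argument, for which the paper cites the proof of \cite[Lem.~4.20]{dh_04} --- so that the higher cohomology vanishes and the spectral sequence collapses onto $H^0 \cong \Sigma^{-n}\map_{cts}(\G_n/F, E_*\langle \det \rangle)$. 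You should promote that alternative to the main argument; alternatively, if you want to keep the dualization framework, replace the illegitimate identification $(DE_n)^{hF} \simeq \Sigma^{-n^2}E_n^{hF}$ by the same descent collapse applied to the induced Morava module $\mE(DE_n) \cong \Sigma^{-n^2}\map_{cts}(\G_n, E_*)$, at which point the two arguments coincide.
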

\begin{proof}
  Consider the homotopy fixed point spectral sequence
  \[
H^*(F,\mE(IE_n)) \implies \pi_*(L_{K(n)}(E \otimes IE_n))^{hF},
  \]
  where the $F$-action is on $IE_n$. 

  Let us identify the abutment $(L_{K(n)}(E \otimes IE_n))^{hF}$ more carefully. Using \Cref{lem:groups} and the fact that homotopy orbits commute with smash product, there are equivalences
  \[
\begin{split}
  (L_{K(n)}(E \otimes IE_n))^{hF} & \simeq L_{K(n)}((E \otimes IE_n)_{hF}) \\
  & \simeq L_{K(n)}((E \otimes (IE_n)_{hF})) \\
  & \simeq L_{K(n)}((E \otimes L_{K(n)}(IE_n)_{hF})) \\
  & \simeq L_{K(n)}(E \otimes IE_n^{hF}).
\end{split}
  \]

  This identifies the abutment as $\mE(IE_n^{hF})$.

   By \Cref{lem:moravamoduleen} there is an isomorphism of Morava modules $\mE(IE_n)  \simeq \Sigma^{-n} \map_{cts}(\mathbb{G}_n,(E_n)_*\langle \det \rangle)$. This identifies the $\E_2$-term of the above spectral sequence as 
  \[
H^s(F,\map_{cts}(\mathbb{G}_n,\Sigma^{-n} E_{*}\langle \det \rangle)). 
  \]
  By the proof of \cite[Lem.~4.20]{dh_04}, this vanishes for $s > 0$, and so the spectral sequence collapses to show that 
  \[
\mE(IE_n^{hF}) \simeq \Sigma^{-n} \map_{cts}(\mathbb{G}_n/F,(E_n)_*\langle \det \rangle). 
  \]
  Now using \cite[Prop.~6.3]{dh_04} there is an equivalence \[\Sigma^{-n}(\mE E^{hF})\langle \det \rangle \cong \Sigma^{-n}\map_{cts}(\G_n/F,(E_n)_*)\langle \det \rangle \simeq \Sigma^{-n}\map_{cts}(\mathbb{G}_n/F,(E_n)_*\langle \det \rangle)\]
  where the last equivalence follows because we give $\map_{cts}(\G_n/F,(E_n)_*)$ the diagonal $\G_n$-action. 
\end{proof}
We now prove that the Gross--Hopkins duality of $E_n^{hF}$ is always an invertible $E_n^{hF}$-module. We thank Lennart Meier for the idea of how to show this. Before we begin the proof, we make the following standard observation. Let $(\cC,\otimes,\unit)$ be a closed symmetric monoidal category with internal hom $F_{\cC}(M,N)$ for $M,N \in \cC$. Then, there is a canonical evaluation map
\[
\epsilon \colon F_{\cC}(M,N) \otimes M \to N.
\]
given as the adjoint to the identity map on $F_{\cC}(M,N)$. 

Let $D_{\cC}(M) = F_{\cC}(M,\unit)$. Now suppose that $M$ is invertible, so that there exists $M^{-1}$ such that $M^{-1} \otimes M \simeq \unit$. Then, it is necessarily the case that $M^{-1} \simeq D_{\cC}(M)$, and that $\epsilon \colon D_{\cC}(M) \otimes M \to \unit$ is an isomorphism. The proof follows easily from the fact that invertible objects are dualizable, see e.g., \cite[Prop.~A.2.8]{HoveyPalmieriStrickl1997Axiomatic}. 

\begin{prop}\label{prop:grosshopkins_pic}
  For any finite subgroup $F \subset \G_n$, we have $IE_n^{hF} \in \Pic(E_n^{hF})$. 
\end{prop}
\begin{proof}
  We first observe that this is true when $F$ is the trivial group, i.e., that $IE_n \in \Pic(E_n)$. Indeed, since $\Pic(E_n)$ is algebraic, in the sense that $\Pic(E_n) \cong \Pic((E_n)_*)$, the Picard group of (graded) $(E_n)_*$-modules (see \cite[Theorem 9.1]{baker_richter}), it suffices to show that $\pi_*IE_n \cong (E_n)_*$ as $E_*$-modules, up to suspension. This is a consequence of \eqref{eq:detinen}, since $E_*\langle \det \rangle \cong E_*$, as an $E_*$-module. 

  For any finite subgroup $F \subset \G_n$, the morphism $E_n^{hF} \to E_n$ is a faithful Galois extension, see \cite[Prop.~3.6]{hms_pic}.  We recall that we consider $E_n$ as a cofree $F$-spectrum. In this case, there is a symmetric monoidal equivalence between the homotopy categories of $E_n^{hF}$-modules, and the homotopy category of genuine $F$-equivariant $E_n$-modules. Here the equivalence takes an equivariant $E_n$-module $M$, to the homotopy fixed point spectrum $M^{hF}$. Moreover, a map in the category of $F$-equivariant $E_n$-modules is an equivalence if it is an underlying equivalence of non-equivariant spectra.  In particular, there is an equivalence $\Pic_F(E_n) \cong \Pic(E_n^{hF})$ between the Picard group of $F$-equivariant $E_n$-modules, and the Picard group of $E_n^{hF}$-modules, given by taking homotopy fixed points. These claims can be found in the last paragraph of Section 6.1 of \cite{hm_tmf}, and are a consequence of Galois descent, first shown in this form by Meier \cite[Lemma 6.1.4 and Proposition 6.2.6]{meier_thesis}. We also refer the reader to \cite[Section 3]{bbhs_pic}, in particular Proposition 3.1 and Corollary 3.2. 

  Let $D_{E_n}(M)$ denote the dual of an $E_n$-module $M$ in the category of $E_n$-module spectra, so that $D_{E_n}(M) = F_{E_n}(M,E_n)$. As noted above $IE_n \in \Pic(E_n)$, so that in particular the evaluation map $\epsilon \colon D_{E_n}(IE_n) \otimes_{E_n} IE_n \to E_n$ is an equivalence. By naturality, this map is a map in the category of $F$-equivariant $E_n$-modules, and is an underlying equivalence, and hence by the discussion above we see that $IE_n \in \Pic_F(E_n)$. It follows from the equivalence $\Pic_F(E_n) \cong \Pic(E_n^{hF})$ that $(IE_n)^{hF} \cong I(E_n^{hF}) \in \Pic(E_n^{hF})$ (recall \Cref{rem:bracketing} - it does not matter if we take the homotopy fixed points inside or outside of the Gross--Hopkins dual). This completes the proof. 
\end{proof}
\subsection{The case \texorpdfstring{$F = C_2$}{F=C2}}
We now specialize to the case of interest, where the prime is 2, and $F = C_2$. In this section, we show that $IE_n^{hC_2}$ is self dual up to a shift, which is congruent to $n \pmod{4}$. We begin by showing that it is self-dual up to some shift. 
\begin{prop}\label{prop:grosshopkinspic}
  The Gross--Hopkins dual $IE_n^{hC_2} \simeq \Sigma^k E_n^{hC_2}$ for some integer $k$, which is only uniquely determined modulo $2^{n+2}$. 
\end{prop}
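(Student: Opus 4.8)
The plan is to establish that $IE_n^{hC_2}$ is an invertible $E_n^{hC_2}$-module and then invoke \Cref{thm:picardcalc} to conclude that it must be a suspension of $E_n^{hC_2}$. The key point is that \Cref{thm:picardcalc} shows $\Pic(E_n^{hC_2}) \cong \Z/2^{n+2}$ with every invertible module equivalent to $\Sigma^k E_n^{hC_2}$ for a unique $k$ modulo $2^{n+2}$; so it suffices to prove $IE_n^{hC_2} \in \Pic(E_n^{hC_2})$.

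First I would recall that $I = IS^0$ is invertible in the $K(n)$-local category by \Cref{thm:moravabc}, so $IE_n = F(E_n, I) \simeq L_{K(n)}(DE_n \otimes I)$ is an invertible object in $K(n)$-local spectra; moreover, as noted in the remark following \Cref{thm:moravabc}, $IE_n$ carries a $\G_n$-action and $IE_n^{hC_2} \simeq (IE_n)^{hC_2}$. The module $IE_n^{hC_2}$ is then a module over $E_n^{hC_2}$. To see it is invertible as such a module, the cleanest route is to base-change along the faithful $C_2$-Galois extension $E_n^{hC_2} \to E_n$: an $E_n^{hC_2}$-module $M$ is invertible if and only if $E_n \otimes_{E_n^{hC_2}} M$ is invertible as an $E_n$-module, since $\Pic$ satisfies faithful Galois descent (this is exactly the descent-theoretic setup of \cite{ms_pic, hms_pic} used earlier in the paper). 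Now $E_n \otimes_{E_n^{hC_2}} IE_n^{hC_2} \simeq L_{K(n)}(E_n \otimes IE_n^{hC_2}) = \mE(IE_n^{hC_2})$ as computed in \Cref{prop:moravamodulefinite} (with $F = C_2$): it is $\Sigma^{-n}(\mE E_n^{hC_2})\langle \det \rangle$, which as an $E_*$-module is just $\Sigma^{-n}\mE E_n^{hC_2} \cong \Sigma^{-n}\map_{cts}(\G_n/C_2, E_*)$, a free $E_*$-module of rank one in the appropriate sense. Hence the base-change is invertible over $E_n$, so $IE_n^{hC_2}$ is invertible over $E_n^{hC_2}$.

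Alternatively, and perhaps more directly, one can cite the result of Devalapurkar mentioned in the introduction, which states precisely that $IE_n^{hC_2} \in \Pic(E_n^{hC_2})$; combined with \Cref{thm:picardcalc} this immediately yields the statement. Either way, once we know $IE_n^{hC_2}$ is invertible, \Cref{thm:picardcalc} forces $IE_n^{hC_2} \simeq \Sigma^k E_n^{hC_2}$ for some integer $k$, well-defined modulo $2^{n+2}$ since that is the order of $\Pic(E_n^{hC_2})$ and $\Sigma^{2^{n+2}} E_n^{hC_2} \simeq E_n^{hC_2}$ by periodicity.

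The main obstacle, if one wants a self-contained argument avoiding the black box of Devalapurkar's theorem, is verifying the faithful-descent claim for Picard groups in the precise form needed — namely that invertibility of an $E_n^{hC_2}$-module can be detected after base change to $E_n$. This is standard (it follows from $E_n^{hC_2} \to E_n$ being faithful together with the fact that dualizability and the unit/counit conditions are detected faithfully), but it requires being careful about working in the $K(n)$-local category and about the relevant symmetric monoidal structure. Everything else is bookkeeping: the identification of $E_n \otimes_{E_n^{hC_2}} IE_n^{hC_2}$ with $\mE(IE_n^{hC_2})$ and the rank-one computation are already done in \Cref{prop:moravamodulefinite}.
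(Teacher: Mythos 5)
Your fallback route---citing Devalapurkar and then invoking \Cref{thm:picardcalc}---is exactly the paper's proof. The only point you elide is that \cite[Thm.~5.9]{lt_stack} is stated for the $K(n)$-local Anderson dual $L_{K(n)}I_{\Z}E_n^{hF}$, so the paper first translates via $IE_n^{hF}\simeq \Sigma L_{K(n)}I_{\Z}E_n^{hF}$ (using that $E_n^{hF}$ is $E(n)$-local, \cite[Eq.~(4)]{vesna_tmf}); after that the argument is word for word what you wrote, and the uniqueness of $k$ modulo $2^{n+2}$ follows from periodicity as you say.

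Your primary, ``self-contained'' descent argument, however, has a genuine gap at the base-change step. For Galois descent along $E_n^{hC_2}\to E_n$ the relevant functor is $M\mapsto E_n\otimes_{E_n^{hC_2}}M$, the relative smash product over $E_n^{hC_2}$, whereas $\mE(-)=\pi_*L_{K(n)}(E_n\otimes -)$ smashes over the sphere. These do not agree: already for $M=E_n^{hC_2}$ the former gives $E_n$ while the latter gives $\map_{cts}(\G_n/C_2,E_*)$. Consequently \Cref{prop:moravamodulefinite} does not compute the module you need for descent, and the module it does compute, $\Sigma^{-n}\map_{cts}(\G_n/C_2,E_*\langle \det \rangle)$, is not free of rank one over $E_*$ in any sense that plugs into a Picard-descent argument---the phrase ``free of rank one in the appropriate sense'' is covering the hole. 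The descent route can be repaired: use $IE_n^{hC_2}\simeq (IE_n)^{hC_2}$ together with the symmetric monoidal equivalence $\Mod_{E_n^{hC_2}}\simeq \Mod_{E_n}^{hC_2}$ coming from the faithful $C_2$-Galois extension. Since $\pi_*IE_n\cong \Sigma^{-n}E_*\langle \det \rangle$ is free of rank one over $E_*$ by \eqref{eq:detinen}, the underlying $E_n$-module of $IE_n$ is invertible, hence $IE_n$ is invertible in $\Mod_{E_n}^{hC_2}$ (its inverse is the termwise dual with the induced action), and its image $(IE_n)^{hC_2}$ under the descent equivalence is therefore invertible over $E_n^{hC_2}$. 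With that correction both routes land in \Cref{thm:picardcalc} and yield the statement; the repaired descent argument has the merit of avoiding the external input from \cite{lt_stack}, at the cost of justifying carefully that invertibility is detected along the Galois extension.
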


\begin{proof}
Combine \Cref{prop:grosshopkins_pic} with \Cref{thm:picardcalc}. 
\end{proof}

By \eqref{eq:detinen} we have $\pi_*IE \cong \Sigma^{-n}(E_n)_*\langle \det \rangle$ as Morava modules. We wish to know the structure of $(E_n)_*\langle \det \rangle$ as a $(E_n)_*[C_2]$-module.  
\begin{lem}\label{lem:c2det}
  As $(E_n)_*[C_2]$-modules, $(E_n)_*\langle \det \rangle \cong \Sigma^{1-(-1)^n}(E_n)_*$.
\end{lem}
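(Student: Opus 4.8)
The plan is to restrict the determinant character to $C_2$, evaluate it on the generator, and then exhibit the isomorphism directly. The $C_2$ we are twisting by is generated by the automorphism $[-1]_F$ of the Honda formal group law, which, as an element of $\mathbb{S}_n = \cal{O}_D^{\times}$, is the central element $-1 \in \Z_2^{\times}$. Since the reduced norm of a central element $z$ of the division algebra $D$ (of dimension $n^2$ over $\Q_2$) is $z^n$, the normalization of the determinant in \cite[Sec.~1.3]{ghmr} gives $\det(\tau) = (-1)^n$, where $\tau$ denotes the generator of $C_2$.

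First I would spell out what this says about the semilinear action: the underlying $E_*$-module of $E_*\langle\det\rangle$ is $E_*$, concentrated in degree $0$, and for $g \in \G_n$ we have $g \cdot x = \det(g)\,g_*(x)$ with $g_*$ the usual action. Restricting to $C_2$, the generator acts by $x \mapsto (-1)^n\tau_*(x)$, and we recall $\tau_*(u_i) = u_i$, $\tau_*(u^k) = (-1)^k u^k$, while $\tau_*$ is the identity on $\mathbb{W}(\F_{2^n})$ (since $\tau \in \mathbb{S}_n$ involves no Frobenius).

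Then I would split into parities. If $n$ is even, $\det(\tau) = 1$, so the $C_2$-action on $E_*\langle\det\rangle$ coincides with that on $E_*$, and the identity is an isomorphism $E_*\langle\det\rangle \cong E_* = \Sigma^{1-(-1)^n}E_*$. If $n$ is odd, then $\tau$ acts by $x \mapsto -\tau_*(x)$, i.e. $E_*\langle\det\rangle$ is the sign-twist; now the element $u \in (E_*\langle\det\rangle)_2$ freely generates the underlying module (as $u$ is a unit) and is $C_2$-fixed, since $\tau\cdot u = -\tau_*(u) = -(-u) = u$. Mapping the bottom class of $\Sigma^2 E_*$ — itself $C_2$-fixed, because the suspension is by a trivial sphere — to $u$ gives an $E_*$-linear $C_2$-equivariant isomorphism $\Sigma^2 E_* \xrightarrow{\ \sim\ } E_*\langle\det\rangle$, and $1 - (-1)^n = 2$ for $n$ odd.

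The one point requiring care is the identity $\det(\tau) = (-1)^n$: one must check that this $C_2$ is the central copy $\langle -1\rangle \subset \mathbb{S}_n$ (and not one meeting the Galois factor) and use the correct normalization of $\det$ from \cite{ghmr}, \cite{strickland_gross}. Given that, the rest is a direct computation, the degree shift by $2$ in the odd case arising exactly because $\tau_*(u) = -u$ cancels the sign introduced by the determinant twist.
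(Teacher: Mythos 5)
Your proposal is correct and follows essentially the same route as the paper: compute $\det(\tau)=(-1)^n$, observe that in $E_*\langle\det\rangle$ the generator acts by $u^k\mapsto(-1)^{n+k}u^k$, and conclude that the identity (for $n$ even) or multiplication by the now-invariant unit $u$ (for $n$ odd) gives the equivariant isomorphism. The only cosmetic difference is your justification of $\det(\tau)=(-1)^n$ via the reduced norm of the central element $-1\in\mathcal{O}_D^\times$, where the paper phrases the same computation as the determinant of an $n\times n$ diagonal matrix with $-1$ on the diagonal.
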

\begin{proof}
  Let $\tau$ denote the generator of $C_2$, then $\det(\tau) = (-1)^n$ (it can be computed as the determinant of a $n \times n$ diagonal matrix with -1 along the diagonal).  It follows that, in $(E_n)_*\langle \det \rangle$, we have
  \[
\tau_*(u^k) = (-1)^{n+k}u^k
  \]
  In particular, if $n \equiv 0 \pmod{2}$, then $C_2$ is in the kernel of the determinant, so that the equivalence $(E_n)_* \langle \det \rangle \cong (E_n)_*$ is $C_2$-equivariant. 

  On the other hand, if $n \equiv 1 \pmod{2}$, then $u$ is invariant under the $C_2$-action. Multiplication by $u$ then gives the $C_2$-equivariant equivalence $(E_n)_*\langle \det \rangle \cong \Sigma^{2}(E_n)_*$. 
\end{proof}
By \Cref{prop:moravamodulefinite} we deduce the following. 
\begin{prop}\label{prop:moravamodulec2}
  There is an equivalence of $(E_n)_*[C_2]$-modules 
  \[
  \mE(IE_n^{hC_2}) \simeq \Sigma^{-n+1-(-1)^n}\mE(E_n^{hC_2}).
  \] 
\end{prop}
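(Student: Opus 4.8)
The plan is to read this off directly from \Cref{prop:moravamodulefinite} and \Cref{lem:c2det}. First I would specialize \Cref{prop:moravamodulefinite} to the prime $2$ and $F = C_2$, obtaining an isomorphism of Morava modules
\[
\mE(IE_n^{hC_2}) \cong \Sigma^{-n}(\mE E_n^{hC_2})\langle \det \rangle.
\]
Restricting the $\G_n$-action to the subgroup $C_2 \subset \G_n$ of interest turns this into an isomorphism of $E_*[C_2]$-modules, and the twist $\langle \det \rangle$ now involves only the restricted character $\det|_{C_2} \colon \tau \mapsto (-1)^n$.

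Second, I would rewrite the twist as a tensor product: for any $E_*[C_2]$-module $M$ and any character $\chi$ of $C_2$ there is a natural isomorphism $M\langle \chi \rangle \cong M \otimes_{E_*} E_*\langle \chi \rangle$ with the diagonal $C_2$-action on the right-hand side, the point being that the multiplication map $M \otimes_{E_*} E_* \to M$ intertwines the two actions by semilinearity. Applying this with $\chi = \det|_{C_2}$ and then invoking \Cref{lem:c2det}, which identifies $E_*\langle \det \rangle \cong \Sigma^{1-(-1)^n}E_*$ as $E_*[C_2]$-modules (concretely, multiplication by $1$ if $n$ is even and by the $C_2$-invariant unit $u$ if $n$ is odd), yields
\[
(\mE E_n^{hC_2})\langle \det \rangle \cong (\mE E_n^{hC_2}) \otimes_{E_*} E_*\langle \det \rangle \cong \Sigma^{1-(-1)^n}\mE E_n^{hC_2}.
\]
Plugging this back into the first display gives the claimed isomorphism $\mE(IE_n^{hC_2}) \cong \Sigma^{-n+1-(-1)^n}\mE E_n^{hC_2}$.

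Since both ingredients are already in hand, there is essentially no substantive obstacle: the statement is a corollary of \Cref{prop:moravamodulefinite} and \Cref{lem:c2det}. The only point requiring care is bookkeeping with the $\det$-twist — verifying that restriction along $C_2 \hookrightarrow \G_n$ carries the $\det$-twisted Morava module to the $\det|_{C_2}$-twisted $E_*[C_2]$-module, and that the ``twist equals $\otimes_{E_*} E_*\langle \chi \rangle$'' identity is compatible with the semilinear action defining Morava modules — but both are immediate once one unwinds the definitions recalled in \Cref{sec:chromintro}.
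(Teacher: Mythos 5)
Your proposal is correct and matches the paper's (essentially unstated) argument: the paper simply deduces \Cref{prop:moravamodulec2} by combining \Cref{prop:moravamodulefinite} with \Cref{lem:c2det}, exactly as you do. Your extra bookkeeping about restricting the $\G_n$-action to $C_2$ and writing the twist as $-\otimes_{E_*} E_*\langle \det\rangle$ is a correct elaboration of what the paper leaves implicit.
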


\begin{cor}\label{cor:dualmod4}
The Gross--Hopkins dual $IE_n^{hC_2} \simeq \Sigma^k E_n^{hC_2}$ for some integer $k \equiv n \pmod{4}$, which is only uniquely determined modulo $2^{n+2}$. 
\end{cor}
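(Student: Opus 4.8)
The plan is to read off the residue of $k$ modulo $4$ by playing \Cref{prop:grosshopkinspic} against \Cref{prop:moravamodulec2} on the level of $E_*[C_2]$-modules. By \Cref{prop:grosshopkinspic} there is an equivalence $IE_n^{hC_2}\simeq \Sigma^k E_n^{hC_2}$ with $k$ well-defined modulo $2^{n+2}$. Applying the functor $\mE(-)=\pi_*L_{K(n)}(E_n\otimes -)$ to this equivalence yields an isomorphism $\Sigma^k\mE(E_n^{hC_2})\cong \mE(IE_n^{hC_2})$; since the $C_2$-action entering the Morava module is induced purely from the action on the smash factor $E_n$, this isomorphism is automatically $C_2$-equivariant, i.e.\ an isomorphism of $E_*[C_2]$-modules. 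On the other hand \Cref{prop:moravamodulec2} gives $\mE(IE_n^{hC_2})\simeq \Sigma^{-n+1-(-1)^n}\mE(E_n^{hC_2})$ as $E_*[C_2]$-modules. Combining, there is an isomorphism
\[
\Sigma^k\mE(E_n^{hC_2})\;\cong\;\Sigma^{-n+1-(-1)^n}\mE(E_n^{hC_2})
\]
of $E_*[C_2]$-modules, so it suffices to show that these can only be isomorphic when $k\equiv -n+1-(-1)^n\pmod 4$, and then to check the elementary congruence $-n+1-(-1)^n\equiv n\pmod 4$.

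The heart of the matter is therefore to show that the $E_*[C_2]$-module $M:=\mE(E_n^{hC_2})$ has suspension period \emph{exactly} $4$, i.e.\ that $\Sigma^a M\cong M$ as $E_*[C_2]$-modules forces $4\mid a$. Multiplication by $u^2\in\pi_4E_n$, which is fixed by the $C_2$-action on $(E_n)_*$, gives a $C_2$-equivariant equivalence $\Sigma^4 M\simeq M$, so it remains to rule out $a\in\{1,2,3\}$. Odd $a$ is immediately excluded because $M\cong \map_{cts}(\G_n/C_2,E_*)$ is concentrated in even degrees. For $a=2$, I would use that the generator $\tau$ of our $C_2\subseteq\G_n$ is the central element $-1\in\Z_2^{\times}\subseteq\mathbb{S}_n$ — the same input underlying $\det(\tau)=(-1)^n$ in \Cref{lem:c2det}. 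Centrality of $\tau$ means it fixes every coset in $\G_n/C_2$, so the diagonal $C_2$-action on $\map_{cts}(\G_n/C_2,E_*)$ is simply postcomposition with the standard involution on $E_*$; hence $H^0(C_2,M)\cong\map_{cts}\big(\G_n/C_2,(E_*)^{C_2}\big)$, and since $(E_*)^{C_2}=\mathbb{W}(\F_{2^n})[\![u_1,\ldots,u_{n-1}]\!][u^{\pm 2}]$ is nonzero and concentrated in degrees $\equiv 0\pmod 4$, a $\Sigma^2$-self-equivalence of $M$ would produce, after applying $H^0(C_2,-)$, an isomorphism between a nonzero graded module supported in degrees $\equiv 0\pmod 4$ and one supported in degrees $\equiv 2\pmod 4$ — impossible. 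This excludes $a=2$; it is the one genuinely non-formal step and, I expect, the main obstacle.

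It then remains to verify the arithmetic: when $n$ is even, $-n+1-(-1)^n=-n\equiv n\pmod 4$ since $2n\equiv 0\pmod 4$; when $n$ is odd, $-n+1-(-1)^n=-n+2\equiv n\pmod 4$ since $2n\equiv 2\pmod 4$. Hence $k\equiv n\pmod 4$, with the ambiguity modulo $2^{n+2}$ inherited from \Cref{prop:grosshopkinspic}, which is exactly \Cref{cor:dualmod4}.
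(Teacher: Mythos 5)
Your proof is correct and follows essentially the same route as the paper: combine \Cref{prop:grosshopkinspic} with the $E_*[C_2]$-module identification of \Cref{prop:moravamodulec2}, use the $4$-fold suspension periodicity of $\mE(E_n^{hC_2})$, and finish with the congruence $-n+1-(-1)^n\equiv n\pmod 4$. The one place you go beyond the paper is in showing that the suspension period is \emph{exactly} $4$ --- ruling out a period-$2$ isomorphism via the centrality of $\tau=-1$ in $\G_n$ and the computation $H^0(C_2,\mE(E_n^{hC_2}))\cong\map_{cts}(\G_n/C_2,(E_*)^{C_2})$, which is nonzero and concentrated in degrees $\equiv 0\pmod 4$ --- whereas the paper outsources precisely this point to an argument of Goerss--Henn--Mahowald--Rezk, so your version is the more self-contained of the two.
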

\begin{proof}
  Recall again that $\mE(E^{hC_2}) \simeq \map_{cts}(\G_n/C_2,(E_n)_*)$. An argument similar to that given on the bottom of page 286 of \cite{ghmr_pic} shows that the element $(u_{2\sigma }\bu^2)^j \in H^0(C_2,(E_n)_*)$ gives rise to an isomorphism $\mE(E^{hC_2}) \cong \mE(\Sigma^{4j} E^{hC_2})$ of $(E_n)_*[C_2]$-modules, for $j \in \Z$.

  We know that $IE_n^{hC_2} \simeq \Sigma^kE_n^{hC_2}$ for some $k \in \Z$. By \Cref{prop:moravamodulec2} and the previous paragraph we deduce that $k \equiv -n + 1 -(-1)^n \pmod{4}$. A simple calculation shows that $-n + 1 - (-1)^n \equiv n \pmod{4}$, and we are done. 
  \end{proof}
  \begin{rem}
    Let $p$ be odd, $n = p-1$, and let $F \subset \G_{p-1}$ denote the maximal finite subgroup of $\G_{p-1}$. In \cite[Prop.~4.13]{bbs_gross} it is shown that there is an isomorphism of $(E_n)_*[F]$-modules $(E_n)_*\langle \det \rangle \cong \Sigma^{2pj}(E_n)_*$, where $j = -n/2(n-2)$. Using the same type of arguments above, and the computation of the Picard group of $E_n^{hF}$-modules \cite{hms_pic}, one can deduce that  $IE_n^{hF} \simeq \Sigma^{k}E_n^{hF}$ where $k \equiv -n-pn(n-2) \pmod {2pn^2}$, a result also obtained in \cite{bbs_gross}. In \cite[Thm.~4.18]{bbs_gross} it is shown that $k = np^2 + n^2$, and indeed we have that $-n -pn(n-2) + 2pn^2 = np^2+n^2$. 
  \end{rem}
\subsection{The calculation of the Gross--Hopkins dual}
In this section we will compute the Gross--Hopkins dual $I(E_n^{hC_2})$. By the previous section we know that it is self-dual up to some suspension. Since $K(n)$-locally the norm map $(E_n)_{hC_2} \to E_n^{hC_2}$ is an equivalence, it follows that $(IE_n)^{hC_2} \simeq I(E_n^{hC_2})$. Hence, by \cref{prop:recognition} it suffices to find $j$ such that $\pi_j(IE_n)^{hC_2} = \pi_{j+1}(IE_n)^{hC_2} = \pi_{j+2}(IE_n)^{hC_2} = 0$. It follows immediately from the definitions that 
  \[
  \pi_t(IE_n)^{hC_2} \cong D_{\Q/\Z}(\pi_{-t}(M_nE_n)_{hC_2})
  \]
so it suffices to find when $\pi_{-j}(M_nE_{hC_2})=\pi_{-j-1}(M_nE_{hC_2})=\pi_{-j-2}(M_nE_{hC_2}) = 0$. 

We recall previously that we have constructed $E_n/I_k^{\infty}$ inductively by cofiber sequences
\[
E_n/I_{k-1}^{\infty} \to v_k^{-1}E_n/I_{k-1}^{\infty} \to E_n/I_k^{\infty}. 
\]

Our key computational input is the following. 
\begin{prop}\label{prop:vanishingstronglyeven}
  As a cofree $C_2$-spectrum, $E_n/I_n^{\infty}$ is strongly even, and hence $\pi_{i}(E_n/I_n^{\infty})_{hC_2} \cong \pi_{i}(E_n/I_n^{\infty})^{hC_2} = 0$ for $i = -3,-2,-1$. 
\end{prop}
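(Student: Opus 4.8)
The plan is to verify the two hypotheses in Greenlees' criterion \Cref{lem:greenstronglyeven}: that $E_n/I_n^\infty$ is underlying even, and that $\pi^{C_2}_{\ast\rho - i}(E_n/I_n^\infty) = 0$ for $i = 1,2,3$. The first is immediate from the identification $\pi^e_*(E_n/I_n^\infty) \cong (E_n)_*/I_n^\infty$ established in the proof of \Cref{prop:equivmonoequiv}: this is a quotient of $(E_n)_*$ by powers of the even-degree elements $p, v_1, \dots, v_{n-1}$, hence concentrated in even degrees. Granting the second clause, the final assertion of the proposition is formal: $E_n/I_n^\infty$ is cofree, so $\pi^{C_2}_{-i}(E_n/I_n^\infty) \cong \pi_i((E_n/I_n^\infty)^{hC_2})$, which \Cref{lem:greenstronglyeven} forces to vanish for $i=1,2,3$; and since $E_n^{tC_2} \simeq \ast$ by \cite{gs_ate}, hence $(E_n/I_n^\infty)^{tC_2} \simeq \ast$ as well, the norm cofiber sequence \eqref{eq:tate} identifies $\pi_{-i}(E_n/I_n^\infty)_{hC_2}$ with $\pi_{-i}(E_n/I_n^\infty)^{hC_2}$ in these degrees.

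For the vanishing of the $\pi^{C_2}_{\ast\rho - i}$, I would induct along the defining tower $E_n/I_{k-1}^\infty \to v_k^{-1}E_n/I_{k-1}^\infty \to E_n/I_k^\infty$, showing at each stage that $E_n/I_k^\infty$ is strongly even, with base case $E_n$ (strongly even by \Cref{ex:stronglyeven}). The key point making the induction possible is that, $E_n$ being strongly even, each $v_k$ admits an equivariant refinement $\bar v_k \in \pi^{C_2}_{m_k\rho}E_n$ with $2m_k = |v_k|$, so the tower can be realized as a tower of cofree $C_2$-$E_n$-modules in which $v_k^{-1}(E_n/I_{k-1}^\infty)$ is the $\bar v_k$-localization and $E_n/I_k^\infty$ its cofiber. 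Localizing at $\bar v_k$ preserves strong-evenness: underlying-evenness is clear, and $\pi^{C_2}_{\ast\rho-i}$ and the Mackey functors $\underline{\pi}_{j\rho}$ are filtered colimits along $\bar v_k$, which has pure $\rho$-multiple degree, while filtered colimits of constant Mackey functors along maps of Mackey functors are again constant. For the cofiber, the long exact sequence in $\pi^{C_2}_\star$ exhibits $\pi^{C_2}_{j\rho - i}(E_n/I_k^\infty)$ as an extension of the $\bar v_k$-power-torsion of $\pi^{C_2}_{j\rho - (i+1)}(E_n/I_{k-1}^\infty)$ by the cokernel of $\pi^{C_2}_{j\rho-i}(E_n/I_{k-1}^\infty) \to \pi^{C_2}_{j\rho-i}(\bar v_k^{-1}E_n/I_{k-1}^\infty)$; this cokernel vanishes because both its source and target vanish (the source by the inductive hypothesis, the target since $\bar v_k^{-1}(E_n/I_{k-1}^\infty)$ is strongly even), and a parallel analysis of $\underline{\pi}_{j\rho}$ shows it is the cokernel of an injection of constant Mackey functors, hence constant, using that $\bar v_k$ is a non-zero-divisor on underlying homotopy because $(p,v_1,\dots,v_k)$ extends to a regular system of parameters of the regular local ring $(E_n)_*$. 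For $i = 1,2$ the remaining power-torsion term vanishes since $\pi^{C_2}_{\ast\rho - 2} = \pi^{C_2}_{\ast\rho-3} = 0$ by the inductive hypothesis.

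The one genuinely substantial point — the main obstacle — is the case $i = 3$: one must show $\bar v_k$ has no power-torsion on $\pi^{C_2}_{\ast\rho - 4}(E_n/I_{k-1}^\infty)$. Strong-evenness by itself sees nothing in degrees of the form $j\rho - 4$, so here I would feed in the explicit $RO(C_2)$-graded homotopy fixed point spectral sequence of \Cref{thm:additivess}: the spectral sequence for the $E_n$-module $E_n/I_{k-1}^\infty$ is a module over that for $E_n$, and inspecting the $\ast\rho - 4$ line one checks that multiplication by $\bar v_k = \bar u_k\bar u^{1-p^k}$ is injective on the $E_\infty$-page there — the graded analogue of $v_k$ being a non-zero-divisor on $(E_n)_*/(p^\infty,\dots,v_{k-1}^\infty)$ — so that its power-torsion vanishes. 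Keeping track, as one ascends the tower, of which classes in this critical degree are hit by differentials (ultimately the $MU_{\mathbb R}$-differentials of \cite{hukriz} imported along $MU_{\mathbb R} \to E_n$) is the technical heart of the argument; everything else is formal manipulation of the long exact sequences and of constant Mackey functors.
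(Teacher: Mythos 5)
Your overall architecture matches the paper's: induct along the tower $E_n/I_{k-1}^{\infty} \to v_{k-1}^{-1}E_n/I_{k-1}^{\infty} \to E_n/I_k^{\infty}$, start from Hahn--Shi's strong evenness of $E_n$, observe that localizing along a class in a $\rho$-multiple degree preserves strong evenness, and use regularity of $(p,v_1,\dots)$ to get injectivity on underlying homotopy. But there is a genuine gap at the step you yourself flag as the ``technical heart.'' By choosing to verify the Greenlees criterion of \Cref{lem:greenstronglyeven} for the cofiber, you are forced in the $i=3$ case to control $\pi^{C_2}_{\ast\rho-4}$ of the previous stage via the exact sequence $0 = \pi^{C_2}_{j\rho-3}Y \to \pi^{C_2}_{j\rho-3}C \to \pi^{C_2}_{j\rho-4}X \to \pi^{C_2}_{j\rho-4}Y$, i.e.\ to show the localization map has no power-torsion in degrees $\ast\rho-4$. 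Strong evenness says nothing there, and your proposed fix --- reading off injectivity of multiplication by $\bar v_k$ from the $E_\infty$-page of the $RO(C_2)$-graded HFPSS of $E_n/I_{k-1}^{\infty}$ --- is not carried out, and is doubtful as stated: every positive-filtration class in that spectral sequence is an $a_\sigma$-multiple and hence $2$-torsion, so already at the first stage of the tower (localization at $v_0 = 2$) multiplication by $2$ fails to be injective on any surviving positive-filtration class in degree $j\rho-4$; and even granting $E_\infty$-page injectivity one must still rule out filtration jumps. So as written the induction does not close.

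The paper sidesteps this entirely with Meier's \Cref{lem:stronglyeven}: to show the cofiber $C$ is strongly even one verifies the \emph{definition}, namely $\underline{\pi}_{k\rho-1}C = 0$ and $\underline{\pi}_{k\rho}C$ constant. The long exact sequence for these degrees only reaches down to $\pi^{C_2}_{k\rho-2}X$, which \emph{is} killed by strong evenness of $X$, and the remaining input is exactly the injectivity of $f^e_*$ on underlying (even) homotopy that you already have from regularity; $\underline{\pi}_{k\rho}C$ is then the cokernel of a map of constant Mackey functors. The vanishing of $\pi^{C_2}_{\ast\rho-3}C$ that you are struggling to prove directly is then an \emph{output} of \Cref{lem:greenstronglyeven} applied to $C$, not an input. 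In short: run Greenlees' equivalence in the forward direction on the cofiber rather than trying to verify its hypotheses degree by degree, and the $\ast\rho-4$ obstruction disappears. Your first paragraph (underlying evenness, and the deduction of the homotopy-group vanishing from cofreeness plus Tate vanishing) is fine.
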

The proof of this relies on the following lemma, which is due to Lennart Meier. 
\begin{lem}[Meier]\label{lem:stronglyeven}
  Suppose that $f\colon X \to Y$ is a morphism of strongly even $C_2$-spectra, with cofiber $C$. If $f^e_* \colon \pi_*^{e}X \to \pi_*^{e}Y$ is injective, then $C$ is strongly even as well. 
\end{lem}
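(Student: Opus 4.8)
The plan is to verify, for the cofiber $C$, the two defining conditions of a strongly even $C_2$-spectrum — that $\underline{\pi}_{k\rho-1}C = 0$ and that $\underline{\pi}_{k\rho}C$ is a constant Mackey functor for every $k\in\Z$ — by running the long exact sequences of (Mackey) homotopy groups attached to the cofiber sequence $X\to Y\to C$ against the hypotheses, invoking Greenlees's criterion \Cref{lem:greenstronglyeven} whenever I need the mixed-degree vanishing that the bare definition of strong evenness does not supply. As a first, easy step I would check that $C$ is underlying even: applying $\pi^e_*$ to $X^e\to Y^e\to C^e$ and using that $X,Y$ are underlying even, the long exact sequence reads, for $n$ odd, $0=\pi^e_nY\to\pi^e_nC\to\pi^e_{n-1}X\xrightarrow{f^e_*}\pi^e_{n-1}Y$; since $f^e_*$ is injective, $\pi^e_nC$ embeds into $\ker f^e_* = 0$, so $\pi^e_nC=0$.

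Next I would treat the vanishing $\underline{\pi}_{k\rho-1}C=0$. By \Cref{lem:greenstronglyeven} applied to $Y$ we get $\pi^{C_2}_{k\rho-1}Y=0$, which together with underlying-evenness of $Y$ gives $\underline{\pi}_{k\rho-1}Y=0$; applied to $X$ it gives $\pi^{C_2}_{k\rho-2}X=0$. The long exact sequence of Mackey functors then forces $\underline{\pi}_{k\rho-1}C$ to inject into $\underline{\pi}_{k\rho-2}X$. Evaluating the target at $C_2/C_2$ yields $\pi^{C_2}_{k\rho-2}X=0$, and evaluating the source at $C_2/e$ yields $\pi^e_{2k-1}C=0$ by the previous step; hence both levels of $\underline{\pi}_{k\rho-1}C$ vanish, so $\underline{\pi}_{k\rho-1}C=0$.

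Finally, for the constancy of $\underline{\pi}_{k\rho}C$: strong evenness of $X$ gives $\underline{\pi}_{k\rho-1}X=0$, so the long exact sequence identifies $\underline{\pi}_{k\rho}C$ with $\operatorname{coker}\!\big(f_*\colon \underline{\pi}_{k\rho}X\to\underline{\pi}_{k\rho}Y\big)$. Both source and target are constant Mackey functors, and since a homomorphism of constant $C_2$-Mackey functors is forced by compatibility with the restriction maps to have equal components over $C_2/e$ and $C_2/C_2$, and cokernels of Mackey functors are computed levelwise, this cokernel is again constant — explicitly the constant Mackey functor on $\operatorname{coker}(f^e_*\colon\pi^e_{2k}X\to\pi^e_{2k}Y)$. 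Together with the previous paragraph this establishes both conditions, so $C$ is strongly even.

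I do not expect a genuine obstacle here; the one point to keep straight is that strong evenness of $X$ only constrains $\underline{\pi}_{k\rho}$ and $\underline{\pi}_{k\rho-1}$ a priori, so the key vanishing $\pi^{C_2}_{k\rho-2}X=0$ used in the second step really does need \Cref{lem:greenstronglyeven} rather than the definition. Beyond that one should just double-check the routine facts that $\underline{\pi}_V(-)$ carries cofiber sequences to long exact sequences of Mackey functors and that (co)limits of $C_2$-Mackey functors are computed levelwise, so that being a constant Mackey functor is preserved under passing to cokernels.
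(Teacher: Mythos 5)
Your proof is correct and follows essentially the same route as the paper's: you verify underlying evenness of $C$ from the injectivity of $f^e_*$ via the long exact sequence, deduce $\underline{\pi}_{k\rho-1}C=0$ using Greenlees's criterion (\Cref{lem:greenstronglyeven}) to supply the vanishing $\pi^{C_2}_{k\rho-2}X=0$, and identify $\underline{\pi}_{k\rho}C$ as the cokernel of a map of constant Mackey functors. Your write-up is in fact slightly more explicit than the paper's about why that cokernel is again constant, which is a welcome addition rather than a divergence.
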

\begin{proof}
  Recall that we must show that $\underline{\pi}_{k\rho-1}C = 0$ and that $\underline{\pi}_{k\rho}C$ is a constant Mackey functor for all $k \in \Z$. We observe that by \Cref{lem:greenstronglyeven} we have that $X$ and $Y$ are underlying even (i.e., $\pi^e_{2k-1}X = \pi^e_{2k-1}Y = 0$ for all $k \in \Z$) and $\pi^{C_2}_{\ast \rho -i}X$ and $\pi^{C_2}_{\ast \rho -i}Y$ are zero for $i = 1,2,3$.

We first show that $\underline \pi_{\ast\rho-1}C = 0$, i.e., that $\pi^{C_2}_{\ast\rho-1}C$ and $\pi^{e}_{\ast\rho-1}C $ are both trivial. The long exact sequence in homotopy and that the fact that $X$ and $Y$ are strongly even, so that in particular $\pi_{\ast\rho-2}^{C_2}X = 0$, shows that $\pi_{*\rho-1}^{C_2}C = 0$, so we just need to show that $\pi_{*\rho-1}^{e} C \cong \pi^e_{2*-1}C = 0$. This follows from the long exact sequence in homotopy, and the assumption that $f_*^e \colon \pi_{2*-2}^eX \to \pi^e_{2*-2}Y$ is injective. Finally, to see that $\underline \pi_{\ast\rho}C$ is a constant Mackey functor, we observe that it is the cokernel of the induced map $f_*\colon \underline \pi_{\ast\rho}X \to \underline \pi_{\ast\rho}Y$. 
\end{proof}
\begin{proof}[Proof of \Cref{prop:vanishingstronglyeven}]
  We prove inductively that $E_n/I_k^{\infty}$ is strongly even for $0 \le k \le n$. The base case is the claim that $E_n$ is strongly even, which is proved by Hahn and Shi, see \Cref{ex:stronglyeven}. Assume inductively that $E_n/I_{k-1}^{\infty}$ is strongly even. The property of being strongly even is closed under filtered colimits, and so $v_{k-1}^{-1}E_n/I_{k-1}^{\infty}$ is also strongly even. The cofiber sequence 
  \[
E_n/I_{k-1}^{\infty} \to v_{k-1}^{-1}E_n/I_{k-1}^{\infty} \to E_n/I_{k}^{\infty} 
  \]
  induces the short exact sequence 
  \[
0 \to (E_n)_*/I_{k-1}^{\infty} \to v_{k-1}^{-1}(E_n)_*/I_{k-1}^{\infty} \to (E_n)_*/I_k^{\infty} \to 0 
  \]
  on underlying homotopy groups. We see that the conditions of \Cref{lem:stronglyeven} are satisfied, and so the cofiber $E_n/I_{k}^{\infty}$ is strongly even as a $C_2$-spectrum. 

  The claim about the homotopy fixed points then follows from \Cref{lem:greenstronglyeven} and \Cref{cor:fixvshfp} in the case $k=n$. We also claim that $(E_n/I_n^{\infty})^{tC_2} \simeq \ast$, so that the norm $(E_n/I_n^{\infty})_{hC_2} \simeq (E_n/I_n^{\infty})^{hC_2}$ is an equivalence, completing the proof. This follows because it is a module over the spectrum $E_n^{tC_2}$, which is easily seen to be contractible, for example by the nilpotence of $a_{\sigma}$ in $\pi^{C_2}_*E_n$.
\end{proof} 

Using \Cref{prop:equivmonoequiv} we see that the homotopy orbit spectral sequence for $(M_nE_n)_{hC_2}$ is just a shift by $-n$ of the homotopy orbit spectral sequence for $(E_n/I_n^{\infty})_{hC_2}$, so we obtain the following corollary.  
\begin{cor}\label{cor:monovanishing}
  We have $\pi_i (M_nE_n)_{hC_2} = 0$ for $i = -3-n,-2-n,-1-n$. 
\end{cor}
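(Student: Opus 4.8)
The plan is to deduce this immediately from the equivariant monochromatic identification of \Cref{prop:equivmonoequiv} together with the strong evenness established in \Cref{prop:vanishingstronglyeven}, so the argument is essentially formal. First I would recall that \Cref{prop:equivmonoequiv} provides a $\G_n$-equivariant equivalence $\Sigma^{-n}E_n/I_n^{\infty} \xrightarrow{\simeq} M_nE_n$; restricting the action along $C_2 \hookrightarrow \G_n$, this is in particular a genuine $C_2$-equivariant equivalence (this is the one point worth flagging: it is crucial that the map of \Cref{prop:equivmonoequiv} is equivariant on the nose, not just an equivalence of underlying spectra with homotopy-compatible actions, so that applying the homotopy orbit functor is legitimate — but this is exactly what that proposition asserts).

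Next I would apply $(-)_{hC_2}$ to this equivalence. Since homotopy orbits are a colimit and therefore commute with suspension, this yields an equivalence $\Sigma^{-n}(E_n/I_n^{\infty})_{hC_2} \simeq (M_nE_n)_{hC_2}$, hence an isomorphism $\pi_i(M_nE_n)_{hC_2} \cong \pi_{i+n}(E_n/I_n^{\infty})_{hC_2}$ for every integer $i$. Equivalently, the homotopy orbit spectral sequence computing $\pi_*(M_nE_n)_{hC_2}$ is obtained from the one computing $\pi_*(E_n/I_n^{\infty})_{hC_2}$ by a shift of $-n$ in total degree, as indicated in the paragraph preceding the statement.

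Finally I would invoke \Cref{prop:vanishingstronglyeven}, which gives $\pi_j(E_n/I_n^{\infty})_{hC_2} = 0$ for $j = -3, -2, -1$. Substituting $j = i + n$, the groups $\pi_i(M_nE_n)_{hC_2}$ vanish precisely when $i + n \in \{-3,-2,-1\}$, i.e. for $i = -3-n,\, -2-n,\, -1-n$, which is the claim. There is no real obstacle here: all of the substantive work has already been carried out in \Cref{prop:equivmonoequiv} and \Cref{prop:vanishingstronglyeven}, and what remains is a degree bookkeeping combined with the compatibility of homotopy orbits with suspension.
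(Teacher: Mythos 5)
Your argument is exactly the paper's: the corollary is deduced by applying homotopy orbits to the $\G_n$-equivariant equivalence $\Sigma^{-n}E_n/I_n^{\infty} \simeq M_nE_n$ of \Cref{prop:equivmonoequiv}, so that the homotopy orbit spectral sequence (and hence the homotopy groups) for $(M_nE_n)_{hC_2}$ is a shift by $-n$ of that for $(E_n/I_n^{\infty})_{hC_2}$, and then invoking the vanishing in degrees $-3,-2,-1$ from \Cref{prop:vanishingstronglyeven}. The degree bookkeeping and the point you flag about genuine equivariance are both correct, so there is nothing to add.
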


Putting this altogether we obtain our computation of $IE_n^{hC_2}$. 
\begin{thm}\label{thm:grosshopkinshc2}
  The Gross--Hopkins dual $IE_n^{hC_2} \simeq \Sigma^{4+n}E_n^{hC_2}$. 
\end{thm}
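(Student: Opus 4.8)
The plan is to assemble the ingredients already established in this section. By \Cref{cor:dualmod4}, which rests on \Cref{prop:grosshopkinspic} (Devalapurkar's theorem that $IE_n^{hC_2}\in\Pic(E_n^{hC_2})$) and the Morava-module computation \Cref{prop:moravamodulec2}, we know that $IE_n^{hC_2}\simeq\Sigma^k E_n^{hC_2}$ for some integer $k$ that is well-defined only modulo $2^{n+2}$. To pin down $k$ it therefore suffices, by the recognition principle \Cref{cor:recognition}, to locate a window $\pi_j,\pi_{j+1},\pi_{j+2}$ of three consecutive vanishing homotopy groups of $IE_n^{hC_2}$; then necessarily $k=j+3$.

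First I would replace $IE_n^{hC_2}$ by $(IE_n)^{hC_2}$: since $E_n^{tC_2}$ vanishes $K(n)$-locally, the norm map identifies these (the Remark following \Cref{thm:moravabc}), so $\pi_t IE_n^{hC_2}\cong\pi_t(IE_n)^{hC_2}$. Next, unwinding $IE_n=F(M_nE_n,I_{\Q/\Z})$ together with the $K(n)$-local equivalence $(E_n)_{hC_2}\to E_n^{hC_2}$ gives the natural isomorphism $\pi_t(IE_n)^{hC_2}\cong\Hom(\pi_{-t}(M_nE_n)_{hC_2},\Q/\Z)$ recorded just before the statement. In particular $\pi_t IE_n^{hC_2}=0$ precisely when $\pi_{-t}(M_nE_n)_{hC_2}=0$.

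Now I feed in the vanishing result: by \Cref{cor:monovanishing} we have $\pi_i(M_nE_n)_{hC_2}=0$ for $i=-3-n,-2-n,-1-n$, which itself follows from the equivariant identification $M_nE_n\simeq\Sigma^{-n}E_n/I_n^\infty$ of \Cref{prop:equivmonoequiv} together with the strong evenness of $E_n/I_n^\infty$ (\Cref{prop:vanishingstronglyeven}, proved via Meier's \Cref{lem:stronglyeven} and an induction up the tower of the spectra $E_n/I_k^\infty$). Translating through the isomorphism above, $\pi_t IE_n^{hC_2}=0$ for $t=n+1,n+2,n+3$. Applying \Cref{cor:recognition} with $j=n+1$ yields $k=j+3=n+4$, so $IE_n^{hC_2}\simeq\Sigma^{4+n}E_n^{hC_2}$.

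The conceptual heart of the argument has in fact already been dealt with: it is the proof that $E_n/I_n^\infty$ is strongly even as a cofree $C_2$-spectrum, which is what forces $(M_nE_n)_{hC_2}$ to have a gap of exactly three zeros in its homotopy, together with \Cref{prop:recognition}, which guarantees that this is the only such gap modulo $2^{n+2}$-periodicity. Given those and the self-duality \Cref{cor:dualmod4}, nothing substantial remains; the one thing to watch is the bookkeeping of shifts — the $\Sigma^{-n}$ relating $M_nE_n$ to $E_n/I_n^\infty$ and the grading flip $t\mapsto -t$ from Pontryagin duality — since getting these right is exactly what produces the shift $4+n$ rather than $4-n$ or $n$.
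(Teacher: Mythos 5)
Your argument is correct and is essentially the paper's own proof: both pin down the shift by combining Devalapurkar's invertibility result (\Cref{prop:grosshopkinspic}), the recognition principle \Cref{cor:recognition}, the Pontryagin duality $\pi_t IE_n^{hC_2}\cong D_{\Q/\Z}(\pi_{-t}(M_nE_n)_{hC_2})$, and the three-term vanishing of \Cref{cor:monovanishing}, with identical index bookkeeping yielding $k=n+4$. No discrepancies to report.
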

\begin{proof}
  By \Cref{cor:monovanishing} we have $\pi_{i}(M_nE_n)_{hC_2}  = 0$ for $i = -3-n,-2-n,-1-n$. We observe that 
  \[
  \pi_iIE_n^{hC_2} \cong \pi_i (I_{\Q/\Z}M_nE_n)^{hC_2} \cong \pi_iI_{\Q/Z}((M_nE_n)_{hC_2}) \cong D_{\Q/\Z}\pi_{-i}(M_nE_n)_{hC_2},
\]
  so that $\pi_{1+n}IE_n^{hC_2} \cong \pi_{2+n}IE_n^{hC_2} \cong \pi_{3+n}IE_n^{hC_2} = 0$. By \Cref{prop:grosshopkinspic,cor:recognition} we deduce that $IE_n^{hC_2} \simeq \Sigma^{4+n}E_n^{hC_2}$. 
\end{proof}
\begin{rem}
  Note that $4+n \equiv n \pmod{4}$ as we expected from \Cref{cor:dualmod4}. 
\end{rem}
This has an equivariant refinement. 
\begin{thm}\label{thm:grosshopkinsequiv}
  The Gross--Hopkins dual $IE_n$ is $C_2$-equivariantly equivalent to $\Sigma^{4+n}E_n$. 
\end{thm}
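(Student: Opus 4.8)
The plan is to upgrade the non-equivariant equivalence $IE_n^{hC_2} \simeq \Sigma^{4+n}E_n^{hC_2}$ of \Cref{thm:grosshopkinshc2} to a statement about $C_2$-spectra, where both sides are regarded as genuine (cofree) $C_2$-spectra in the sense of \Cref{prop:vanishingstronglyeven} and the paragraph following \Cref{prop:equivmonoequiv}. The essential point is that a $C_2$-equivariant map between cofree $C_2$-spectra is an equivalence as soon as it is an equivalence on underlying spectra, since cofreeness means $X \xrightarrow{\simeq} F(EC_{2+},X)$ and the latter only sees the underlying homotopy type together with the $C_2$-action. So it suffices to produce a \emph{$C_2$-equivariant} map $IE_n \to \Sigma^{4+n}E_n$ which is an underlying equivalence, or equivalently to identify $IE_n$ with $\Sigma^{4+n}E_n$ in the homotopy category of spectra-with-$C_2$-action.

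First I would set up the identification on the level of Morava modules. By \Cref{prop:moravamodulec2} we have an isomorphism $\mE(IE_n^{hC_2}) \simeq \Sigma^{-n+1-(-1)^n}\mE(E_n^{hC_2})$ of $E_*[C_2]$-modules, and by \Cref{thm:grosshopkinshc2} combined with the periodicity witnessed by $(u_{2\sigma}\bu^2)^j$ (as in the proof of \Cref{cor:dualmod4}) this $E_*[C_2]$-module is in fact $\Sigma^{4+n}\mE(E_n^{hC_2})$. Now I would invoke the equivalence between the $K(n)$-local category of $E_n$-modules-with-$C_2$-action and an algebraic category of Morava modules — more precisely, the statement that $E_n$-module spectra in the $K(n)$-local category with $\G_n$-action are detected by their Morava modules, specialized to the finite subgroup $C_2$. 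Since $IE_n$ and $\Sigma^{4+n}E_n$ are both $K(n)$-local $E_n$-modules (the former because $I$ is $K(n)$-locally invertible and dualizable, see \Cref{lem:moravamoduleen}), and since their underlying $E_*$-modules are free of rank one, an isomorphism of Morava modules lifts to an equivalence of $C_2$-equivariant $E_n$-modules, hence in particular to a $C_2$-equivariant equivalence of underlying spectra.

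Alternatively, and perhaps more cleanly, I would argue directly through the strongly even machinery already in place: $M_nE_n \simeq \Sigma^{-n}E_n/I_n^\infty$ is strongly even (\Cref{prop:vanishingstronglyeven,prop:equivmonoequiv}) as a $C_2$-spectrum, so its Brown–Comenetz dual's homotopy-fixed-point data is pinned down by \Cref{lem:qzdual}; the vanishing line of \Cref{cor:monovanishing} together with \Cref{prop:grosshopkinspic} forces $IE_n^{hC_2}$ to be $\Sigma^{4+n}E_n^{hC_2}$, and the key observation is that the suspension-index argument of \Cref{cor:recognition} is insensitive to whether we work equivariantly or not once we know both sides are cofree. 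One then picks a generator of $\pi_0 F(IE_n, \Sigma^{4+n}E_n)^{hC_2}$ lifting the identity on underlying homotopy groups — such a generator exists because the relevant descent/HFPSS has no obstructions in the right degree, by the periodicity class being a permanent cycle — and this gives the desired $C_2$-equivariant map, which is an underlying equivalence and hence an equivalence of cofree $C_2$-spectra.

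\textbf{The main obstacle} I anticipate is making rigorous the lift from the algebraic isomorphism of $E_*[C_2]$-modules to an actual $C_2$-equivariant map of spectra: this requires either an obstruction-theory argument showing the relevant mapping-spectrum homotopy fixed point spectral sequence has no differentials or extension problems in the degree of interest (which should follow from the structure in \Cref{thm:additivess}, since the obstruction groups are concentrated away from the relevant stem by the same vanishing-line analysis used in \Cref{prop:recognition}), or an appeal to a rigidity/algebraicity statement for $K(n)$-local $E_n$-modules with finite-group action. I would lean on the latter if a clean reference is available, and otherwise spell out the former; either way the heart of the matter is purely that cofree $C_2$-spectra are determined by their underlying spectrum-with-action, which reduces everything to data we have already computed.
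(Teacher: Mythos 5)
Your overall strategy --- produce a $C_2$-equivariant map which is an underlying equivalence and conclude by cofreeness --- is the same as the paper's, but of your two proposed mechanisms for producing that map, the first is actually false and the second omits the one step that carries all the content. On the first: an isomorphism of $E_*[C_2]$-modules (Morava modules restricted to $C_2$) does \emph{not} detect equivalences of $K(n)$-local $E_n$-modules with $C_2$-action. Indeed, the argument of \Cref{cor:dualmod4} shows that the classes $(u_{2\sigma}\bu^2)^j$ induce isomorphisms $\mE(E_n^{hC_2}) \cong \mE(\Sigma^{4j}E_n^{hC_2})$ of $E_*[C_2]$-modules for every $j$, while $\Sigma^{4}E_n^{hC_2} \not\simeq E_n^{hC_2}$ because $\Pic(E_n^{hC_2}) \cong \Z/2^{n+2}$ by \Cref{thm:picardcalc}. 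This is precisely why \Cref{cor:dualmod4} only determines the shift modulo $4$; the algebraic data cannot see more, and no rigidity theorem of the kind you invoke is available (the non-vanishing of $\kappa_n$ in \Cref{sec:exoticpic} is another symptom of the same failure). So ``an isomorphism of Morava modules lifts to an equivalence of $C_2$-equivariant $E_n$-modules'' cannot be the argument.

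Your second route is the right one, but the step you defer to ``the main obstacle'' is the entire proof, and your justification (``no obstructions in the right degree, by the periodicity class being a permanent cycle'') does not identify the obstruction group or explain its vanishing. The paper makes this concrete: by \eqref{eq:detinen} and periodicity one first chooses a non-equivariant $E_n$-module equivalence $\Sigma^{4+n}E_n \to IE_n$ and lets $d \colon S^{4+n} \to IE_n$ be its adjoint; the cofiber sequence $C_{2+} \to S^0 \to S^{\sigma}$ then shows that the obstruction to lifting $d$ along $[S^{4+n},IE_n]_{C_2} \to [S^{4+n},IE_n]$ lies in the single group $\pi^{C_2}_{3+n+\sigma}IE_n$. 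By $\rho$-periodicity (invertibility of $\bu$) and \eqref{cor:fixvshfp} this group is $\pi_{2+n}IE_n^{hC_2}$, which by the already-established non-equivariant identification $IE_n^{hC_2} \simeq \Sigma^{4+n}E_n^{hC_2}$ of \Cref{thm:grosshopkinshc2} equals $\pi_{-2}E_n^{hC_2} = 0$ by the gap of \Cref{prop:recognition}. Note that this vanishing genuinely uses the prior non-equivariant computation together with the three-fold gap, not periodicity alone. Once $d$ is lifted, extending by the $E_n$-module structure gives an equivariant map that is an underlying equivalence, and cofreeness finishes the argument exactly as you say.
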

\begin{proof}
  This is essentially the same argument as given in \cite[Thm.~8.2]{anderson_ko}. By \eqref{eq:detinen} and periodicity there is a non-equivariant equivalence $\pi_*(\Sigma^{4+n}E_n) \cong \pi_*(IE_n)$. In the usual way, see for example \cite[Prop.~2.2]{anderson_ko}, there is non-equivariant equivalence of spectra $\Sigma^{4+n}E_n \xr{\simeq} IE_n$, which is nonetheless an $E_n$-module map. Let $d \colon S^{4+n} \to IE_n$ be the spectrum map adjoint to this. 

 Now consider the $C_2$-cofiber sequence 
  \[
C_{2+}\to S^0 \to S^{\sigma},
  \]
  which gives rise to an exact sequence 
  \[
[S^{4+n},IE_n]_{C_2} \to [S^{4+n},IE_n] \to [S^{3+n+\sigma},IE_n]_{C_2}. 
  \]
  It suffices to show that $d \in [S^{4+n},IE_n]$ maps to zero in $[S^{3+n+\sigma},IE_n]_{C_2}$. But by periodicity and \Cref{cor:fixvshfp} 
  \[
\pi_{3+n+\sigma}^{C_2}IE_n \cong \pi^{C_2}_{2+n}IE_n \cong \pi_{2+n}IE_n^{hC_2}.
  \]
By \Cref{thm:grosshopkinshc2} this group is isomorphic to $\pi_{-2}E_n^{hC_2}$ which is trivial by \Cref{prop:recognition}. 
\end{proof}
\section{The Gross--Hopkins dual of \texorpdfstring{$E_n^{hC_2}$}{EnhC2} - second proof}
In this section, we give a second proof for the computation of $IE_n^{hC_2}$. We do this by completely computing the $C_2$-homotopy fixed point spectral sequence of $IE_n$. More specifically, we will compute $IE_n^{hC_2}$ by first computing the $C_2$-Tate spectral sequence of $E_n/I_k^\infty$ inductively on $k$.  This approach has the advantage that it does not require knowledge of the Picard group of $E_n^{hC_2}$-modules. The base case, when $k = 0$, is a direct consequence of the calculations of Hahn and Shi given in \Cref{thm:additivess}.  For the inductive step, we will transport differentials along maps of spectral sequences that are induced from the cofiber sequence
$$E_n/I_k^\infty \longrightarrow v_k^{-1}E_n/I_k^\infty \longrightarrow E_n/I_{k+1}^\infty.$$
To transport these differentials, we prove a version of the generalized geometric boundary theorem of Mark Behrens in the appendix (see \Cref{geometricboundary}). The theorem is tailored for our purposes and we prove it by universal model methods, which is different from Behrens' original diagram chasing approach.

Once we have computed the $C_2$-Tate spectral sequence of $E_n/I_n^\infty$, we can immediately determine the $C_2$-homotopy fixed point spectral sequence of $I_nE_n$ by using \Cref{lem:qzdual,prop:equivmonoequiv}.  The computation of the HFPSS$(IE_n^{hC_2})$, together with the $E_n^{hC_2}$-module structure on $IE_n^{hC_2}$, will imply that $IE_n^{hC_2} \simeq \Sigma^{n+4}E_n^{hC_2}$.

\subsection{Computations}
We will work with the Tate spectral sequence, which has the form
\[
\hat{H}^s(C_2, \pi_t E_n) \Longrightarrow \pi_{t-s}E_n^{tC_2},
\] 
where $\hat{H}^*(C_2, \pi_* E_n)$ denotes the Tate cohomology of $C_2$ with coefficients in $\pi_* E_n$. We will import the $RO(C_2)$-graded result from \Cref{thm:additivess} and then focus on the integer graded part. There are maps of spectral sequences
\[
   \setlength\mathsurround{0pt}
\begin{tikzcd}
H^s(C_2, \pi_t E_n) \arrow[r] \arrow[d,Rightarrow,"\text{HFPSS}"] & \hat{H}^s(C_2, \pi_t E_n) \arrow[r] \arrow[d,Rightarrow,"\text{Tate SS}"] & H_{-s-1}(C_2,\pi_t E_n) \arrow[d,Rightarrow, "\text{HOSS}"]\\
\pi_{t-s}E_n^{hC_2} \arrow[r] & \pi_{t-s}E_n^{tC_2} \arrow[r] & \pi_{t-s-1}(E_n)_{hC_2}
\end{tikzcd}
\]
such that the first map is multiplicative \cite{green_may_tate}. The maps in the top row of the diagram (i.e., the maps between $\E_2$-pages) are isomorphisms when $s>0$ and $s<-1$, respectively.  
For $s=0,-1$, we have the exact sequence
$$0\rightarrow \hat{H}^{-1}(C_2, \pi_t E_n) \rightarrow H_0(C_2, \pi_t E_n) \xrightarrow{N} H^0(C_2, \pi_t E_n) \rightarrow \hat{H}^0(C_2, \pi_t E_n) \rightarrow 0$$
where $N$ is the algebraic norm map. In particular, the differentials in the HFPSS and the HOSS will be completely determined by the differentials in the Tate SS in this case. 

\begin{prop}
  \begin{enumerate}
    \item The $\E_2$-page of the $RO(C_2)$-graded Tate spectral sequence for $E_n$ is
  \[
\E_2^{s, t} = W(\mathbb{F}_{2^n})[\![\bar{u}_1, \bar{u}_2, \ldots, \bar{u}_{n-1}]\!][\bar{u}^{\pm 1}] \otimes \mathbb{Z}[u_{2\sigma}^{\pm 1}, a_\sigma^{\pm 1}]/(2a_\sigma).
\]
Here, the bidegrees of the classes are as follows: $|\bu_i| = (0,0)$ for all $1 \leq i \leq n-1$, $|\bar{u}| = (\rho,0)$, $|u_{2\sigma}| = (2-2\sigma,0)$ and $|a_{\sigma}| = (-\sigma,1)$. 
\item The classes $\bar{u}_1$, $\ldots$, $\bar{u}_{n-1}$, $\bar{u}^{\pm 1}$, and $a_\sigma$ are permanent cycles.  All the differentials in the spectral sequence are determined by the differentials 
\begin{eqnarray*}
d_{2^{k+1} -1} (u_{2\sigma}^{2^{k-1}}) &=&  \bar{u}_k\bar{u}^{2^k-1}a_\sigma^{2^{k+1}-1}, \, \, \, 1 \leq k \leq n-1, \\ 
d_{2^{n+1}-1}(u_{2\sigma}^{2^{n-1}})&=& \bar{u}^{2^n-1}a_\sigma^{2^{n+1}-1}, \, \, \, k = n, \
\end{eqnarray*}
and multiplicative structures. 
\end{enumerate}
\end{prop}
\begin{proof}
The $\E_2$-page is obtained by a direct computation. Under the spectral sequence map from the HFPSS($E_n^{hC_2}$) to the Tate SS($E_n^{tC_2}$), the elements 
$$\bu_i, \bar{u}, u_{2\sigma}, a_{\sigma}$$
on the $\E_2$-page of the HFPSS map to the elements 
$$\bu_i, \bar{u}, u_{2\sigma}, a_{\sigma}$$
on the $\E_2$-page of the Tate SS, respectively. By \Cref{thm:additivess}, in the HFPSS we have the differentials
\begin{eqnarray*}
d_{2^{k+1} -1} (u_{2\sigma}^{2^{k-1}}) &=&  \bar{u}_k\bar{u}^{2^k-1}a_\sigma^{2^{k+1}-1}, \, \, \, 1 \leq k \leq n-1, \\ 
d_{2^{n+1}-1}(u_{2\sigma}^{2^{n-1}})&=& \bar{u}^{2^n-1}a_\sigma^{2^{n+1}-1}, \, \, \, k = n. \
\end{eqnarray*}
By the naturality of the spectral sequence map, we deduce the desired differentials in the Tate SS. Furthermore, since the classes $\bar{u}_1$, $\ldots$, $\bar{u}_{n-1}$, $\bar{u}^{\pm 1}$, and $a_\sigma$ are permanent cycles in the HFPSS, their images in the Tate SS are also permanent cycles.

The Tate SS is multiplicative and all the elements are of the form $u_{2\sigma}^k$ times a permanent cycle. This, combined with the differentials above, give all the differentials in the Tate SS.  After these differentials, the spectral sequence will vanish on the $\E_{2^{n+1}}$-page.
\end{proof}

Taking the integer-graded part of the above spectral sequence, we obtain the following corollary.
\begin{cor}{\label{integer}}
Let $u^2:=u_{2\sigma}\bar{u}^2$ and $\alpha := a_{\sigma} \bar{u}$.  The $\E_2$-page of the integer-graded $C_2$-Tate spectral sequence of $E_n$ is
 $$\E_2^{s,t}=(\pi_0E_n/2)[u^{\pm 2}, \alpha^{\pm 1}],$$
where the bidegrees $(t-s,s)$ of the elements are the following: 
\begin{eqnarray*}
|u^2|&=&(4,0), \\
|\alpha|&=&(1,1).
\end{eqnarray*}
All the differentials are determined via multiplicative structures by the differentials 
\begin{eqnarray*}
d_{2^{l+1}-1}(u^{2^l}) &=& u_l \alpha^{2^{l+1}-1},\,\,\, 1 \leq l \leq n-1, \\ 
d_{2^{n+1}-1}(u^{2^n}) &=& \alpha^{2^{n+1}-1}.
\end{eqnarray*}
  \end{cor}
\begin{rem}
The element $u^2$ coincides with the square of $u$ where $u \in (E_n)_2$.
\end{rem}
\begin{rem}
  The previous two computations show that $E_n^{tC_2} \simeq \ast$. As noted previously, this is also a consequence of the fact that $E_n^{hC_2} \to E_n$ is a faithful Galois extension \cite[Prop.~3.6]{hms_pic}. 
\end{rem}
This will be our starting point to compute the $C_2$-Tate spectral sequence for $E_n/2^\infty = E_n/I_1^\infty$, which is the $k=1$ case of the main theorem in this section (\Cref{inductive}, computing the Tate SS for $E_n/I_k^\infty$).  We make the convention $u_n=1$ so that the differentials can be described as
\begin{eqnarray*}
d_{2^{l+1}-1}(u^{2^l}) &=& u_l \alpha^{2^{l+1}-1},\,\,\, 1 \leq l \leq n.
\end{eqnarray*}
To describe the $\E_2$-page of the Tate SS for $E_n/I_k^\infty$, we pause here to introduce some algebraic notations.
\begin{defn}
Let $R$ be a commutative ring, and $r$ a nonzero divisor in $R$. We denote the cokernel of $R \hookrightarrow r^{-1}R$ by $R/r^\infty$.
\end{defn}
The cokernel $R/r^\infty$ is a non unital ring and an $R$-module. An element of $R/r^\infty$ can be represented as $\frac{a}{r^k}$, where $a\in R$ is not divisible by $r$ and $k>0$. The multiplication is given by $\frac{a}{r^{k_1}} \cdot \frac{b}{r^{k_2}}=\frac{ab}{r^{k_1+k_2}}$. For the module structure, given $a \in R$, $\frac{b}{r^k} \in R/r^\infty$, then $a \cdot \frac{b}{r^k}=\frac{ab}{r^k}$. Suppose that $a=a'r^{k'}$ where $a'$ is not divisible by $r$, then $\frac{ab}{r^k}=\frac{a'b}{r^{k-k'}}$. Note that if $k' \geqslant k$, then $\frac{a'b}{r^{k-k'}}=0$ in $R/r^\infty$.
\begin{ex}
At prime $2$, $(E_n)_*/2^\infty$ is the cokernel of $(E_n)_* \hookrightarrow 2^{-1}(E_n)_*$ and $(E_n)_*/(2^\infty, u_1^\infty)$ is the cokernel of $(E_n)_*/2^\infty \hookrightarrow u_1^{-1}(E_n)_*/2^\infty$. We denote $(E_n)_*/(2^\infty, u_1^\infty)$ by $(E_n)_*/I_2^\infty$. In $(E_n)_*/I_2^\infty$, the element $2 \cdot \frac{u^2}{2u_1}$ is zero because it is equal to $\frac{u^2}{2^0u_1}$, and the power of $2$ in the denominator is $0$, which is not positive. Recall that in \Cref{subsection:mono}, we denote $E_n\otimes S^0/I_k^\infty$ by $E_n/I_k^\infty$. The homotopy groups $\pi_*(E_n/I_k^\infty)$ (also denoted by $(E_n/I_k^\infty)_*$) are
$$(E_n)_*/I_k^\infty=(E_n)_0/(2^\infty, u_1^\infty,\cdots,u_{k-1}^\infty)[u^{\pm 1}].$$
In particular, they are $(E_n)_*$-modules. 
\end{ex}

Since the spectrum $E_n/I_k^\infty$ is an $E_n$-module, the $C_2$-Tate spectral sequence for $E_n/I_k^\infty$ is a module over the the $C_2$-Tate spectral sequence of $E_n$. In particular, the $\E_2$-page of the $C_2$-Tate spectral sequence of $E_n/I_k^\infty$ is a module over the $\E_2$-page of the $C_2$-Tate spectral sequence of $E_n$ (which we denote by $\E_2(E_n^{tC_2})$). For a $\E_2(E_n^{tC_2})$-module $M$, we use the notation $M\cdot u$ for the module that is isomorphic to $M$, and every element is named as $mu$ where $m \in M$ and $u \in (E_n)_2$.

\begin{lem}\label{lem:E2pageTateEn/Ik}
For $1 \le k \le n$, the $\E_2$-page of the $C_2$-Tate spectral sequence for $E_n/I^\infty_k$ is
    $$\E_2^{*,*} =\hat{H}^*(C_2, (E_n/I^\infty_k)_*) = ((E_n/2)_0/(u_1^\infty,\cdots,u_{k-1}^\infty)[\alpha^{\pm 1},u^{\pm 2}]) \cdot u$$
where the $(t-s,s)$-bidegree of $u$ is $(2,0)$ and the $(t-s,s)$-bidegree of $\alpha$ is $(1,1)$.
\end{lem}
\begin{proof}
We compute the $\E_2$-page inductively on $k$. The base case is the $\E_2$-page of the $C_2$-Tate spectral sequence for $E_n/I^\infty_0$, which is just $E_n$, and is stated in \Cref{integer}. Now, assume that the statement is true for $k-1<n$. The short exact sequence
$$E_n/I_{k-1}^\infty \longrightarrow u_{k-1}^{-1}E_n/I_{k-1}^\infty \longrightarrow E_n/I_k^\infty$$
induces the long exact sequence
$$\cdots \hat{H}^s(C_2,  (E_n/I^\infty_{k-1})_*) \xrightarrow{i} \hat{H}^s(C_2, (u_{k-1}^{-1}E_n/I^\infty_{k-1})_*) \xrightarrow{p} \hat{H}^s(C_2, (E_n/I^\infty_k)_*) \xrightarrow{\partial} \hat{H}^{s+1}(C_2, (E_n/I^\infty_{k-1})_*) \rightarrow \cdots.$$
Because, as an $(E_n)_*$-module, $(E_n/I^\infty_{k-1})_*$ is $u_{k-1}$-torsion free, where $u_{k-1} \in (E_n)_0$, the second term in the long exact sequence above is 
$$\hat{H}^s(C_2, (u_{k-1}^{-1}E_n/I^\infty_{k-1})_*)=u_{k-1}^{-1}\hat{H}^s(C_2, (E_n/I^\infty_{k-1})_*)$$
and the map $i$ is the inclusion into the $u_{k-1}$-localization. This implies that in the long exact sequence above, the map $\partial$ is the zero map in all degrees, and the long exact sequence splits into short exact sequences. 
In the short exact sequence
\begin{equation}\label{equation:ses}
0\rightarrow \E_2^{s,t}((E_n/I^\infty_{k-1})^{tC_2}) \rightarrow \E_2^{s,t}((u_{k-1}^{-1}E_n/I^\infty_{k-1})^{tC_2}) \rightarrow \E_2^{s,t}((E_n/I^\infty_{k})^{tC_2}) \rightarrow 0,
\end{equation}
the first map is the inclusion map
$$i \colon \E_2^{s,t}((E_n/I^\infty_{k-1})^{tC_2})\lhook\joinrel\longrightarrow u_{k-1}^{-1}\E_2^{s,t}((E_n/I^\infty_{k-1})^{tC_2}),$$
so the third term is
\begin{equation}\label{equation:third}
\E_2^{s,t}((E_n/I^\infty_{k})^{tC_2})=\coker(i)=\E_2^{s,t}((E_n/I^\infty_{k-1})^{tC_2})/u_{k-1}^\infty.\end{equation}
By the inductive assumption, we have 
$$\E_2^{s,t}((E_n/I^\infty_{k-1})^{tC_2})= ((E_n/2)_0/(u_1^\infty,\cdots,u_{k-2}^\infty)[\alpha^{\pm 1},u^{\pm 2}]) \cdot u.$$ 
Therefore, from \eqref{equation:third}, the $\E_2$-page of the $C_2$-Tate spectral sequence of $E_n/I_k^\infty$ is
$$\E_2^{s,t} = ((E_n/2)_0/(u_1^\infty,\cdots,u_{k-1}^\infty)[\alpha^{\pm 1},u^{\pm 2}]) \cdot u.$$
This proves the claim for $k$ and completes the inductive step.

\end{proof}
\begin{rem}
If we only want the additive structure (and not the $\E_2(E_n^{tC_2})$-module structure), we can compute $\hat{H}^s(C_2, (E_n/I^\infty_k)_t)$ directly. Since $C_2$ acts trivially on $(E_n/I^\infty_k)_t$ when $t=4m$ and acts as multiplication by $(-1)$ on $(E_n/I^\infty_k)_t$ when $t=4m+2$, the $C_2$-Tate cohomology of $E_n/I^\infty_k$ is
\[
\hat{H}^s(C_2, (E_n/I^\infty_k)_t)=
\begin{cases}
\kernel(E_n/I^\infty_k \xrightarrow{\times 2} E_n/I^\infty_k)_t &\text{for odd }s\text{ and }4\,|\,t;\\
\kernel(E_n/I^\infty_k \xrightarrow{\times 2} E_n/I^\infty_k)_t &\text{for even }s\text{ and }2\,|\,t \text{ but }4 \nmid t;\\
0 &\text{else}.
\end{cases}
\]
The kernel $\kernel(E_n/I^\infty_k \xrightarrow{\times 2} E_n/I^\infty_k)$,
as a set, is
$$\left\{\frac{x}{2u_1^{\ell_1}\cdots u_{k-1}^{\ell_{k-1}}} \mid x \in E_n, \text{ and not divisible by }2,u_1,\cdots,u_{k-1}\right\}.$$
As a non unital ring and $E_n$-module, it is
$$\kernel(E_n/I^\infty_k \xrightarrow{\times 2} E_n/I^\infty_k) \cong (E_n/2)_0/(u_1^\infty,\cdots,u_{k-1}^\infty).$$
\end{rem}

Denote $\frac{1}{2u_1\cdots u_{k-1}}$ by $x_k$. With this notation, we can write any element in $E_n/I_k^\infty$ as $y x_k^m$ where $y \in E_n$ and $m>0 \in \mathbb{Z}$.  For example, the element
$$\frac{u^3}{2u_1^2} \in \kernel(E_n/I^\infty_2 \xrightarrow{\times 2} E_n/I^\infty_2)$$
is $2u^3x_2^2$.  In particular, any element in $\kernel(E_n/I^\infty_k \xrightarrow{\times 2} E_n/I^\infty_k)$ will have the format $2^{m-1}y x_k^m$ where $y \in E_n$ and $m$ a positive integer.  Let
$$x_k(m)=2^{m-1} x_k^m.$$
With this notation, elements on the $\E_2$-page of the $C_2$-Tate spectral sequence of $E_n/I^\infty_k$ can be written as $yx_k(m)$.

\begin{thm}{\label{inductive}}
Let $j$ be an integer and $m$ be a positive integer. All the nonzero differentials in the $C_2$-Tate spectral sequence for $E_n/I^\infty_k$ are $(E_n/2)_0[\alpha]$-linear and are determined by the following differentials:
\begin{enumerate}
\item[(1)] for $1 \leqslant l < k$,
$$d_{2^{l+1}-1} \left(u^{2^{l+1}j+2^{l+1}-1}x_k(m)\right) = u_l  \alpha^{2^{l+1}-1} u^{2^{l+1}j+2^l-1}x_k(m);$$
\item[(2)] for $k \leqslant l \leqslant n$,
$$d_{2^{l+1}-1} \left(u^{2^{l+1}j+2^l+2^k-1}x_k(m)\right) =  u_l \alpha^{2^{l+1}-1} u^{2^{l+1}j+2^k-1}x_k(m). $$

  \end{enumerate}
\end{thm}
 
Before giving the proof, note that setting $k=n$ in \Cref{lem:E2pageTateEn/Ik} and \Cref{inductive} produces the following corollary:
    \begin{cor}{\label{monocomputation}}
The $C_2$-Tate spectral sequence of $E_n/I^\infty_n$ has $\E_2$-page
    $$\E_2^{s,t} = (E_n/2)_0/(u_1^\infty,\cdots,u_{n-1}^\infty)[\alpha^{\pm 1},u^{\pm 2}]\cdot u.$$

All the nonzero differentials are $(E_n/2)_0[\alpha]$-linear and are determined by the differentials 
    \[
    \begin{split}d_{2^{l+1}-1} \left(u^{2^{l+1}j+2^{l+1}-1}x_n(m)\right) &=  u_l u^{2^{l+1}j+2^l-1} \alpha^{2^{l+1}-1}x_n(m),\,\,\,1 \leqslant l \leqslant n,
     \end{split}
     \]
where $j,m \in \mathbb{Z}$, $m>0$.
    \end{cor}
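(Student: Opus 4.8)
The plan is to obtain this statement as nothing more than the specialization $k=n$ of \Cref{inductive}, so that no new spectral sequence input is needed. First I would substitute $k=n$ into the description of the $\E_2$-page given in \Cref{inductive}: the module $(E_n)_0/(2,u_1^\infty,\cdots,u_{k-1}^\infty)[\alpha^{\pm 1}]\{u^{2m+1}/(2u_1\cdots u_{k-1})\}$ becomes, verbatim, $(E_n)_0/(2,u_1^\infty,\cdots,u_{n-1}^\infty)[\alpha^{\pm 1}]\{u^{2m+1}/(2u_1\cdots u_{n-1})\}$, so the $\E_2$-term requires nothing further.

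Next I would treat the differentials. With $k=n$, part~(1) of \Cref{inductive} supplies the generating $d_{2^{l+1}-1}$ for every $l$ in the range $1\le l< n$, while part~(2) contributes exactly one more, the differential at $l=n$. The only thing to check is that these assemble into the single family displayed in the corollary for $1\le l\le n$. For $1\le l<n$ this is immediate. For $l=n$ one uses the elementary identity $1+2+\cdots+2^{k-1}=2^n-1=1+2+\cdots+2^{l-1}$ together with the standing convention $u_n=1$ (fixed just before \Cref{inductive}): under these identifications the source $u^{2^{l+1}m+2^l+1+2+\cdots+2^{k-1}}/(2u_1\cdots u_{n-1})$ and target $u_l\alpha^{2^{l+1}-1}u^{2^{l+1}m+1+2+\cdots+2^{k-1}}/(2u_1\cdots u_{n-1})$ of part~(2) become precisely the $l=n$ instance of the corollary's formula. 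Since all differentials in \Cref{inductive} are $\alpha$-linear and generated by parts~(1)--(2), the same holds after specialization.

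Because the argument is a direct substitution, there is no real obstacle here; the single point needing attention is the bookkeeping identity that merges parts~(1) and~(2) at $l=n$, which is the trivial arithmetic above. I would therefore write the proof as essentially one or two lines invoking \Cref{inductive} and recording this identity.
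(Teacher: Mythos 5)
Your proposal is correct and matches the paper exactly: the paper derives \Cref{monocomputation} simply by setting $k=n$ in \Cref{inductive}, and your bookkeeping check that parts (1) and (2) merge at $l=n$ via $1+2+\cdots+2^{n-1}=2^n-1$ and the convention $u_n=1$ is precisely the (implicit) content of that specialization.
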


\begin{rem}
Despite the complexity of the formulas, the differentials are obtained by repeatedly using \Cref{geometricboundary} to transport differentials from the $C_2$-Tate spectral sequence of $E_n/I_{k-1}^\infty$ to the Tate spectral sequence of $E_n/I_k^\infty$. They form a regular pattern. This pattern can be seen in \Cref{example}, where the entire computation is given for the case $n=2$.  
\end{rem}

\begin{proof}[Proof of \Cref{inductive}]
       
We have computed the $\E_2$-page in \Cref{lem:E2pageTateEn/Ik}.  To prove the differentials, we will use induction on $k$. Note that the $C_2$-Tate spectral sequence of $E_n/I_k^\infty$ is a module over the $C_2$-Tate spectral sequence of $E_n$, and $(E_n)_0[\alpha]$ are permanent cycles in the $C_2$-Tate spectral sequence of $E_n$. Therefore, all differentials in the $C_2$-Tate spectral sequence of $E_n/I_k^\infty$ are $(E_n)_0[\alpha]$-linear.

The base case, when $k=1$, can be deduced from \Cref{integer} as follows. Consider the cofiber sequence
$$E_n \rightarrow 2^{-1}E_n \rightarrow E_n/I_1^\infty.$$
By \Cref{integer}, the differentials in the $C_2$-Tate spectral sequence of $E_n$ are determined by the differentials 
$$d_{2^{l+1}-1}(u^{2^l}) = u_l \alpha^{2^{l+1}-1},\, \, \, 1 \leqslant l \leqslant n$$
and multiplicative structures. In particular, $\alpha$ is a permanent cycle and the differentials are $\alpha$-linear. The $\E_2$-pages form a long exact sequence with connecting morphism $\partial$ as follows:
  $$\cdots \rightarrow \E_2^{s,t}(E_n^{tC_2}) \rightarrow \E_2^{s,t}((2^{-1}E_n)^{tC_2}) \rightarrow \E_2^{s,t}((E_n/I_1^\infty)^{tC_2}) \overset{\partial}{\rightarrow} \E_2^{s+1,t}(E_n^{tC_2}) \rightarrow \cdots.$$
By the classical geometric boundary theorem, e.g., \Cref{geometricboundary} with $r=1$, we deduce the differentials 
\begin{align*}
\partial \left(d_{2^{l+1}-1}\left(u^{2^l+1}x_1\right)\right) &= d_{2^{l+1}-1}\left(\partial\left(u^{2^l+1}x_1\right)\right) \\&= d_{2^{l+1}-1}(\alpha u^{2^l}) \\&= u_l \alpha^{2^{l+1}} \\ &= \partial \left(u_l \alpha^{2^{l+1}-1}u x_1\right),\, \, \, 1 \leqslant l \leqslant n
\end{align*}
Note that in the formula above, $x_1$ is $\frac{1}{2}$ by definition. 

Because everything on the $\E_2$-page of the $C_2$-Tate spectral sequence of $E_n$ is $2$-torsion, the $\E_2$-page of the $C_2$-Tate spectral sequence of $2^{-1}E_n$ vanishes at the $\E_2$-page, and the connecting morphism $\partial$ is an isomorphism.  Therefore, from the differential above, we obtain the differentials 
$$d_{2^{l+1}-1}\left(u^{2^l+1}x_1\right) = u_l \alpha^{2^{l+1}-1}ux_1,\, \, \, 1 \leqslant l \leqslant n$$
in the $C_2$-Tate spectral sequence of $E_n/I_1^\infty$.
With the same argument, we have the differentials 
$$d_{2^{l+1}-1} \left(u^{2^{l+1}j+2^l+1}x_1(m)\right) = u_l u^{2^{l+1}j+1} \alpha^{2^{l+1}-1}x_1(m), \,\,\, 1 \leqslant l \leqslant n$$
in the $C_2$-Tate spectral sequence of $E_n/I_1^\infty$. For degree reasons, these differentials determine all the nonzero differentials by $(E_n/2)_0[\alpha]$-linearity.
This finishes the computation for the $k=1$ case. 
  
Now, suppose that \Cref{inductive} holds for $k$ (where $k<n$).  To prove the claim for $k+1$, consider the cofiber sequence
$$E_n/I_k^\infty \rightarrow u_k^{-1}E_n/I_k^\infty \rightarrow E_n/I_{k+1}^\infty.$$
The differentials in the $C_2$-Tate spectral sequence of $E_n/I_k^\infty$ are given by the inductive hypothesis.  By naturality, we can deduce the following differentials in the Tate spectral sequence of $u_k^{-1}E_n/I_k^\infty:$
$$d_{2^{l+1}-1} \left(u^{2^{l+1}j+2^{l+1}-1}x_k(m)\right) = u_l u^{2^{l+1}j+2^l-1} \alpha^{2^{l+1}-1}x_k(m),  \,\,\, 1 \leqslant l \leqslant k.$$

For degree reasons, the above differentials determines all the nonzero differentials in the Tate spectral sequence for $u_k^{-1}E_n/I_k^\infty$ by $(E_n/2)_0[\alpha]$-linearity.  The $\E_r$-page ($r< 2^{k+1}$) of the spectral sequences consists of lines of slope 1 passing $(4m+2,0)$ and each line either supports differentials to another line or is hit by differentials from another line.  Hence, there is no room for other $d_r$-differentials for $r<2^{k+1}$.  At the $\E_{2^{k+1}-1}$-page, all the dots that are left are free rank one $(u_k^{-1}E_n/(2, u_1,\cdots, u_{k-1}))_0$-modules. The $d_{2^{k+1}-1}$-differentials are multiplication by $u_k$. Since $u_k$ is invertible in $(u_k^{-1}E_n/(2, u_1,\cdots, u_{k-1}))_0$, all the $d_{2^{k+1}-1}$-differentials are isomorphisms from the sources to the targets.  On the $\E_{2^{k+1}-1}$-page, every line is either supporting or hit by a differential which is an isomorphism.  Therefore, after these differentials, the $\E_{2^{k+1}}$-page vanishes and the spectral sequence collapses.
  
By naturality, the following differentials exist in the Tate spectral sequence of $E_n/I_{k+1}^\infty$:
$$d_{2^{l+1}-1} \left(u^{2^{l+1}j+2^{l+1}-1}x_k(m)\right) = u_l u^{2^{l+1}j+2^l-1} \alpha^{2^{l+1}-1}x_k(m),\,\,\, 1 \leqslant l \leqslant k.$$
These differentials are $(E_n/2)_0[\alpha]$-linear in the $C_2$-Tate spectral sequence of $E_n/I_{k+1}^\infty$. In particular, the above differentials determine all the nonzero differentials up to the $\E_{2^{k+1}}$-page. This proves the differentials in (1). 

To prove the differentials in (2), note that by the inductive hypothesis, we have the following differentials in the Tate spectral sequence of $E_n/I_k^\infty$: 
$$d_{2^{l+1}-1} \left(u^{2^{l+1}j+2^l+2^k-1}x_k(m)\right) = u_l u^{2^{l+1}j+2^k-1} \alpha^{2^{l+1}-1}x_k(m), \,\,\, k +1 \leqslant l \leqslant n. $$

We will now apply \Cref{geometricboundary} to the sequence
$$E_n/I_k^\infty \overset{i}{\longrightarrow} u_k^{-1}E_n/I_k^\infty \overset{p}{\longrightarrow} E_n/I_{k+1}^\infty,$$
with $r=2^{k+1}-1$, $r'=2^{l+1}-1$, and 
\begin{eqnarray*}
x&=& u_k u^{2^{l+1}j+2^l+2^k-1}x_k(m), \\ 
x'&=& u_lu_k u^{2^{l+1}j+2^k-1} \alpha^{2^{l+1}-1}x_k(m), \\
y_1'&=&u_k u^{2^{l+1}j+2^l+2^k-1}x_k(m), \\
y_1&=&u^{2^{l+1}j+2^l-1} \alpha^{-2^{k+1}+1}x_{k}(m),\\
y_2'&=&u_lu_k u^{2^{l+1}j+2^k-1} \alpha^{2^{l+1}-1}x_k(m), \\
y_2&=&u_l u^{2^{l+1}j+2^{k+1}-1} \alpha^{2^{l+1}-2^{k+1}}x_k(m), \\
z&=&u^{2^{l+1}j+2^l+2^{k+1}-1} \alpha^{-2^{k+1}+1}x_{k+1}(m), \\
z'&=& u_l u^{2^{l+1}j+2^{k+1}-1} \alpha^{2^{l+1}-2^{k+1}}x_{k+1}(m).
\end{eqnarray*}
It is straightforward to check that 
\begin{eqnarray*}
d_{r'}x&=&x', \\
d_ry_1&=&y_1', \\
d_ry_2&=&y_2', \\
i_*(x)&=&y_1', \\
i_*(x')&=&y_2', \\
p_*(y_1)&=&z, \\
p_*(y_2)&=&z'. 
\end{eqnarray*}
It follows from \Cref{geometricboundary} that $d_{r'}z=z'$.  Therefore, we have the differentials
\[
d_{2^{l+1}-1}\left(u^{2^{l+1}j+2^l+2^{k+1}-1} \alpha^{-2^{k+1}+1}x_{k+1}(m)\right) 
=u_l u^{2^{l+1}j+2^{k+1}-1} \alpha^{2^{l+1}-2^{k+1}}x_{k+1}(m)
\]
for $k+1 \leq l \leq n$. One can multiply everything by powers of $\alpha$ and run the same argument to prove that all these differentials are $\alpha$-linear. The differentials in (2) follow directly from these differentials by multiplying both sides by $\alpha^{2^{k+1}-1}$.  This completes the inductive step.          
    \end{proof}
    \begin{ex}\label{example}
    In this example, we will demonstate the inductive computation from the $C_2$-Tate spectral sequence of $E_2$ to the $C_2$-Tate spectral sequence of $E_2/2^\infty = E_2/I_1^\infty$, and then from the $C_2$-Tate spectral sequence of $E_2/I_1^\infty$ to the $C_2$-Tate spectral sequence of $E_2/I_2^\infty$. The input is the $C_2$-Tate spectral sequence of $E_2$ (see the $n=2$ case in \Cref{integer}). We use charts to show the computational results. The meaning of the dots should be clear from the formula in \Cref{inductive} (e.g., the black dots in \Cref{fig:TateSSE2/pinfty-page3} denote copies of $(\pi_0(E_2)/2^\infty)[[u_1]]$ while black dots in Figure \Cref{fig:TateSSv1invertedE2/pinfty-page3} denote copies of $\pi_0(E_2)[[u_1,u_1^{-1}]]$). The charts of this height $2$ computations will show the shifting pattern of differentials.
    
    The first step is to compute $C_2$-Tate spectral sequence of $E_2/2^\infty$ via the cofiber sequence
    $$E_2 \longrightarrow 2^{-1}E_2 \longrightarrow E_2/2^\infty.$$
    The connecting homomorphism on the $\E_2$-page is an isomorphism and all the differentials follows from the classical geometric boundary theorem. The resulting differentials are shown in \Cref{fig:TateSSE2/pinfty-page3} and \Cref{fig:TateSSE2/pinfty-page7}. The spectral sequence vanishes from the $\E_8$-page.

\begin{figure}
\begin{center}
\makebox[\textwidth]{\includegraphics[trim={4cm 6cm 4cm 6cm}, clip, scale = 0.85]{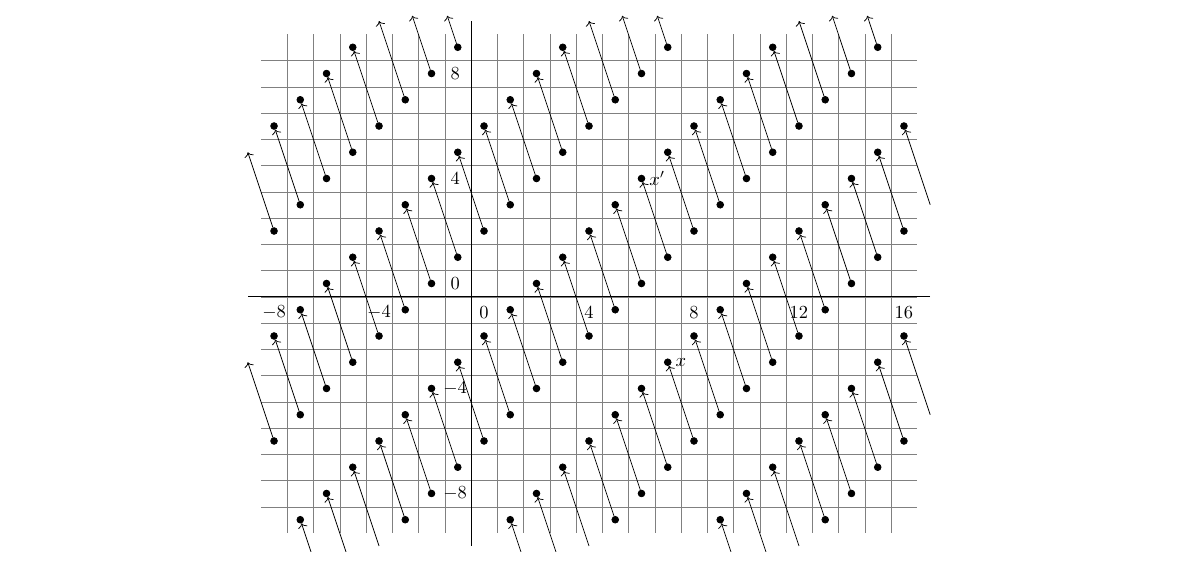}}
\end{center}
\begin{center}
\caption{The $\E_3$-page of $C_2$-Tate spectral sequence of $E_2/2^\infty$. The  $d_3$-differentials are multiplication by $u_1$ and have nontrivial cokernel.}
\hfill
\label{fig:TateSSE2/pinfty-page3}
\end{center}
\end{figure}

\begin{figure}
\begin{center}
\makebox[\textwidth]{\includegraphics[trim={4cm 6cm 4cm 6cm}, clip, scale = 0.85]{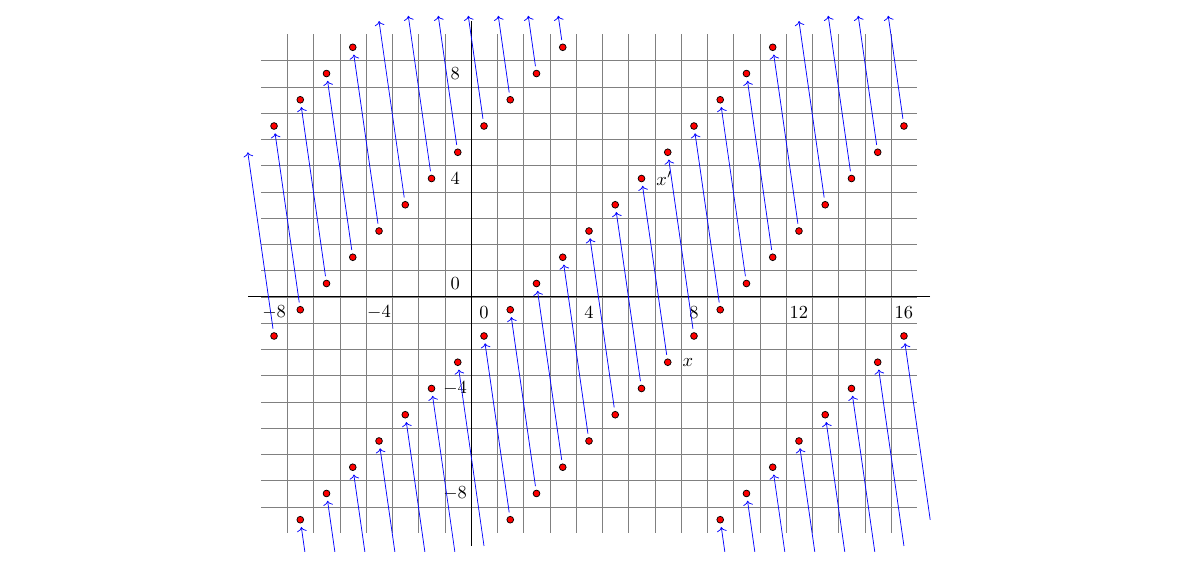}}
\end{center}
\begin{center}
\caption{The $\E_7$-page of $C_2$-Tate spectral sequence of $E_2/p^\infty$. The $d_7$-differentials are multiplication by $u_2$ and are isomorphisms. In particular, the $\E_8$-page vanishes.}
\hfill
\label{fig:TateSSE2/pinfty-page7}
\end{center}
\end{figure}

The next step is to compute the $C_2$-Tate spectral sequence of $E_2/(2^\infty,u_1^\infty)$ via the cofiber sequence
    $$E_2/2^\infty \longrightarrow {u_1}^{-1}E_2/2^\infty \longrightarrow E_2/(2^\infty,u_1^\infty).$$
    The $C_2$-Tate spectral sequence of the first term ($E_2/p^\infty$) is computed in the first step, and the $C_2$-Tate spectral sequence of the second term (${v_1}^{-1}E_2/2^\infty$) follows from the naturality of the spectral sequence map induced by $E_2/2^\infty \rightarrow {u_1}^{-1}E_2/2^\infty$. We show the result in \Cref{fig:TateSSv1invertedE2/pinfty-page3}; note that it vanishes from the $\E_4$-page (this is because $d_3$ is multiplication by $u_1$, and $u_1$ becomes a unit in ${u_1}^{-1}E_2/2^\infty$).
        
\begin{figure}
\begin{center}
\makebox[\textwidth]{\includegraphics[trim={4cm 6cm 4cm 6cm}, clip, scale = 0.85]{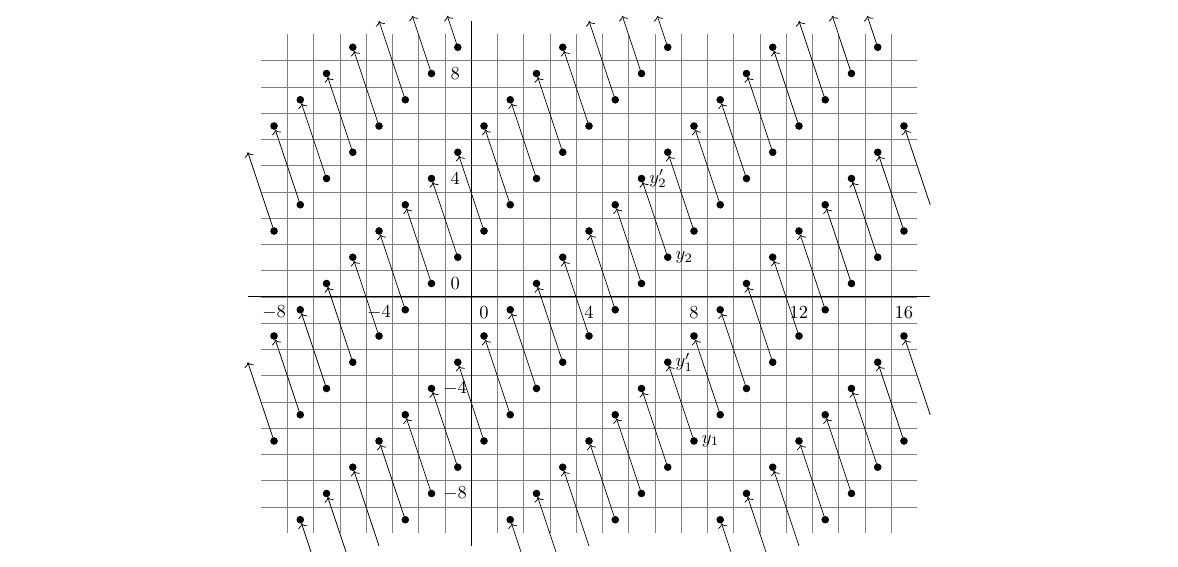}}
\end{center}
\begin{center}
\caption{The $\E_3$-page of $C_2$-Tate spectral sequence of $u_1^{-1}E_2/p^\infty$. The $d_3$-differentials are multiplication by $u_1$ and are an isomorphisms. In particular, the $\E_4$-page vanishes.}
\hfill
\label{fig:TateSSv1invertedE2/pinfty-page3}
\end{center}
\end{figure}

From the naturality of the spectral sequence map induced by ${u_1}^{-1}E_2/2^\infty \rightarrow E_2/(2^\infty,u_1^\infty)$, we have the $d_3$-differential as claimed in \Cref{inductive}, which is shown in \Cref{fig:TateSSE2/pinftyv1infty-page3}. However, neither the naturality nor the connecting morphism on the $\E_2$-page gives the desired $d_7$. Here we need to apply \Cref{geometricboundary} for $r=3$ to prove the desired $d_7$. We show the result in \Cref{fig:TateSSE2/pinftyv1infty-page7}. We mark the corresponding $x$, $x'$ in \Cref{fig:TateSSE2/pinfty-page3} and \Cref{fig:TateSSE2/pinfty-page7}, $y_1$, $y'_1$, $y_2$, $y'_2$ in \Cref{fig:TateSSv1invertedE2/pinfty-page3}, and $z$, $z'$ in \Cref{fig:TateSSE2/pinftyv1infty-page3} and \Cref{fig:TateSSE2/pinftyv1infty-page7}.

\begin{figure}
\begin{center}
\makebox[\textwidth]{\includegraphics[trim={4cm 6cm 4cm 6cm}, clip, scale = 0.85]{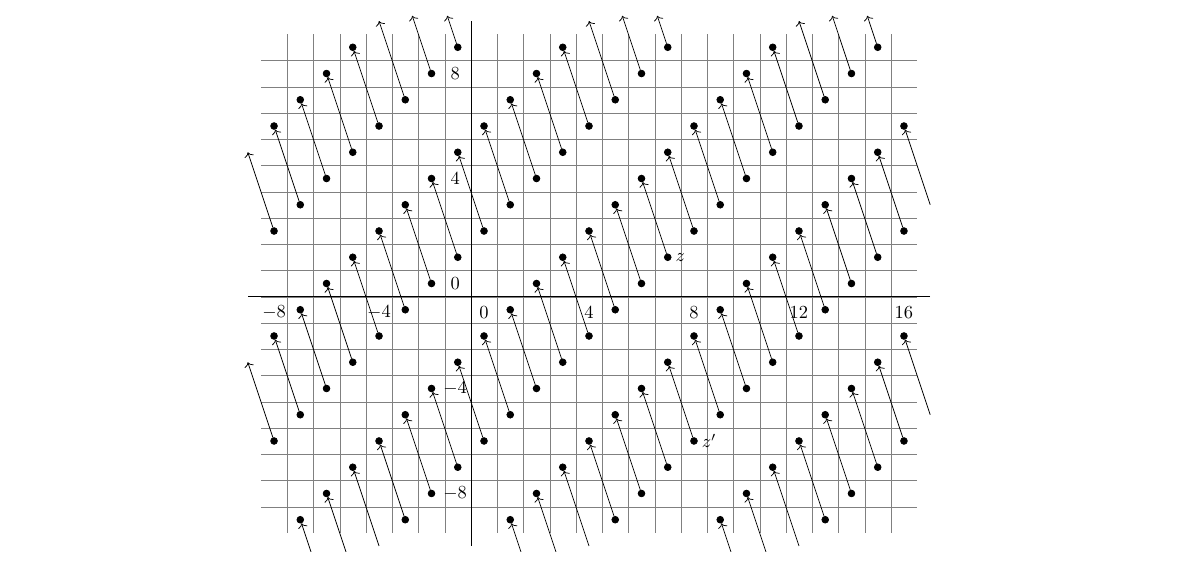}}
\end{center}
\begin{center}
\caption{The $\E_3$-page of $C_2$-Tate spectral sequence of $E_2/I_2^\infty$. The $d_3$-differentials are multiplication by $u_1$ and have nontrivial kernels.}
\hfill
\label{fig:TateSSE2/pinftyv1infty-page3}
\end{center}
\end{figure}

\begin{figure}
\begin{center}
\makebox[\textwidth]{\includegraphics[trim={4cm 6cm 4cm 6cm}, clip, scale = 0.85]{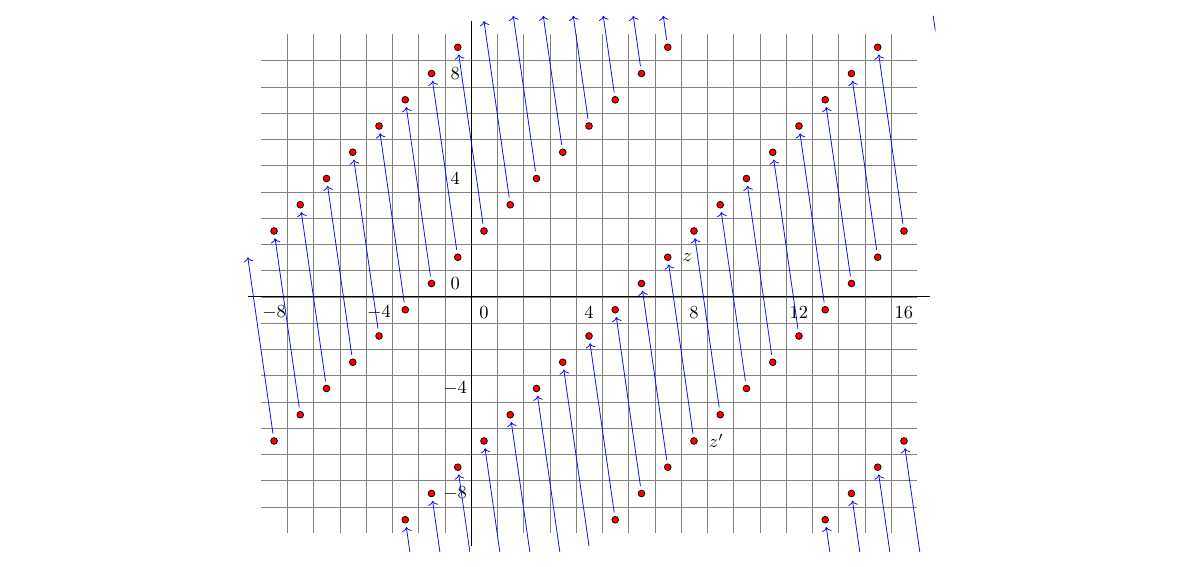}}
\end{center}
\begin{center}
\caption{The $\E_7$-page of $C_2$-Tate spectral sequence of $E_2/I_2^\infty$. The $d_7$-differentials are multiplying by $u_2$ and are isomorphisms. In particular, the $\E_8$-page vanishes.}
\hfill
\label{fig:TateSSE2/pinftyv1infty-page7}
\end{center}
\end{figure}
\end{ex}

\subsection{The homotopy orbit spectral sequence for \texorpdfstring{$(M_nE_n)_{hC_2}$}{MnEnhC2}}  
 The description of the Tate cohomology groups immediately implies an identification of the $\E_2$-term of the homotopy orbit spectral sequence via the isomorphism (not as rings)
\begin{equation}\label{equ:Tatecohomology}
\widehat{H}^s(C_2,\pi_tX) \cong H_{-s-1}(C_2,\pi_tX)
\end{equation}
for $s \le -2$. 
  
Unfortunately, unlike the Tate spectral sequence, the homotopy fixed points spectral sequence is not multiplicative.  We will name elements on $\E_2$-page of  the homotopy orbit spectral sequence by their names on the $\E_2$-page of the Tate spectral sequence via the above isomorphisms. To emphasise that it is not multiplicative, we introduce the following notation to state the $\E_2$-page of the homotopy orbit spectral sequence as a bigraded $\mathbb{Z}$-module. Given an abelian group $A$ and a set $S$ of bigraded elements, we denote the bigraded $\mathbb{Z}$-module
$\underset{s \in S}{\oplus} A$
by $AS$. This is the bigraded $\mathbb{Z}$-module such that for all elements $s$ in the set $S$, there is a copy of $A$ at the bidegree of $s$. In our case,
\begin{align*}
A&=((E_n)_0/I_n^\infty)\text{ or }\,(E_n/2)_0/(u_1^\infty,\cdots,u_{n-1}^\infty),\\
S&=\left\{\alpha^{-1}u^{2j+1} \mid j\in \Z\right\}\text{ or }\, \left\{\alpha^su^{2j+1} \mid \text{for }s<-1, j\in \Z\right\}.
\end{align*}

\begin{prop}\label{prop:E2HOSS}
The homotopy orbit spectral sequence for $E_n/I_n^\infty$ has $\E_2$-page
\begin{align*}
\E^2_{0,*}&=((E_n)_0/I_n^\infty)\left\{\alpha^{-1}u^{2j+1} \mid j\in \Z\right\}\\
\E^2_{s,*}&=(E_n/2)_0/(u_1^\infty,\cdots,u_{n-1}^\infty)\left\{\alpha^{-s-1}u^{2j+1} \mid \text{for }s<-1, j\in \Z\right\}
\end{align*}
where the $(t+s,s)$-bidegree of $\alpha^su^{2j+1}$ is $(4j+s+1,-s-1)$. 
\end{prop} 
\begin{proof}
We can compute
\[
H_s(C_2, \pi_* (E_n/I^\infty_n)_t)=
\begin{cases}
E_n/I^\infty_n &\text{for }s=0\text{ and }4\,|\,t;\\
(E_n/2)_0/(u_1^\infty,\cdots,u_{n-1}^\infty)  &\text{for }s>0\text{ even and }4\,|\,t;\\
(E_n/2)_0/(u_1^\infty,\cdots,u_{n-1}^\infty)  &\text{for }s>0 \text{ odd, and }2\,|\,t \text{ but } 4 \nmid t;\\
0 &\text{else}.
\end{cases}
\]
We name elements in the homotopy orbit spectral sequence by the same names of their corresponding elements the Tate spectral sequence via the isomorphism \eqref{equ:Tatecohomology}. There is a reindexing process from the $(t-s,s)$ Tate spectral sequence grading to the $(t+s,s)$ homotopy orbit spectral sequence grading. Let $x$ be an element in $(E_n/2)_0/(u_1^\infty,\cdots,u_{n-1}^\infty)$.  The element $a^su^{2j+1}x \in \hat{H}^s(C_2,(E_n/I_n^\infty)_{2s+4j+2})$ corresponds to an element in $H^{-s-1}(C_2,(E_n/I_n^\infty)_{2s+4j+2})$ under the isomorphism \eqref{equ:Tatecohomology}. The $(t+s,s)$ grading is $(4j+1+s,-s-1)$. This gives the desired result.
\end{proof}
\begin{thm}\label{thm:HOSSMnEn}
      All the nonzero differentials in the homotopy orbit spectral sequence for $E_n/I_n^\infty$ are:
    \[
    \begin{split}d_{2^{l+1}-1} \left(\alpha^s u^{2^{l+1}j+2^{l+1}-1}yx_n(m)\right) &=  u_l u^{2^{l+1}j+2^l-1} \alpha^{s+2^{l+1}-1}yx_n(m),
     \end{split}
     \]
where $s<-2^{l+1}$, $1 \leqslant l \leqslant n$, $m>0$, and $y \in (E_n/2)_0$.
    \end{thm}
\begin{proof}
By \Cref{monocomputation}, we have the following differentials in the Tate spectral sequence:
    \[
    \begin{split}d_{2^{l+1}-1} \left(\alpha^s u^{2^{l+1}j+2^{l+1}-1}yx_n(m)\right) &=  u_l u^{2^{l+1}j+2^l-1} \alpha^{s+2^{l+1}-1}yx_n(m),\,\,\,s\in \Z,\,1 \leqslant l \leqslant n.
     \end{split}
     \]
When $s<-2^{l+1}$, both the source and the target have nonzero preimage in the homotopy orbit spectral sequence under the isomorphism map in this range. By naturality we obtain the differentials in the homotopy orbit spectral sequence. There is no room for any more nonzero differentials by degree reasons.
\end{proof}

\begin{rem}
From the result, we could organize the homotopy orbit spectral sequence as follows: let $R_1$ be the non unital ring $(E_n)_0/(2^\infty,u_1^\infty,\cdots,u_{n-1}^\infty)$ and $\mathfrak{m}$ be the ideal generated by $(1/4,1/8)$.  Note that $\mathfrak{m} = (1/4,1/8,1/16,\cdots)$. Then $R_1/\mathfrak{m}$ consists of all order $2$ elements in $R_1$, and is isomorphic to the non unital ring $(E_n/2)_0/(u_1^\infty,\cdots,u_{n-1}^\infty)$. Denote the bigraded $(t,s)$-ring $R_1/\mathfrak{m}[\alpha^{-1}]$ by $R$, where elements in $R_1$ have bidegree $(0,0)$ and $|\alpha^{-1}|=(-2,1)$. By the module structure on the $\E_2$-page of the Tate spectral sequence for $E_n/I_n^\infty$, the $\E_2$-page of homotopy orbit spectral sequence for $E_n/I_n^\infty$ is a free rank one $R[u^{\pm 2}]$-module on one generator in degree $(0,0)$.  More specifically, the $\E_2$-page of the homotopy orbit spectral sequence for $E_n/I_n^\infty$, as a $R[u^{\pm 2}]$-module, is $R[u^{\pm 2}]\{x\}$ (where $|x|=(0,0)$).  All the differentials are $R$-linear.
\end{rem}

Note that \Cref{thm:HOSSMnEn} also determines the homotopy orbit spectral sequence of $(M_nE_n)_{hC_2}$, as the two spectral sequences differ by a shift of $-n$ by \Cref{prop:equivmonoequiv}.  Since we have a complete description of the homotopy fixed point spectral sequence of $(IE_n)^{hC_2}$, we can apply \Cref{lem:qzdual} to give our second proof of \Cref{thm:grosshopkinshc2}. We will use the following lemma from \cite{bbs_gross}: 
\begin{lem}[Lemma~4.7 in \cite{bbs_gross}]\label{lem:ss}
Suppose $\mathcal{M_*^{*,*}}$ is a spectral sequence of modules over a spectral sequence $\mathcal{E}^{*,*}_*$ of algebras. Assume additionally that $\mathcal{M}_2^{*,*}$ is free of rank one on a class $x$, and $x$ survives to the $\cal{E}_\infty$-page, then the spectral sequence $\mathcal{M_*^{*,*}}$ is, up to a shift, isomorphic to the spectral sequence of $\mathcal{E}_*^{*,*}$. 
\end{lem}
\begin{thm}\label{thm:HFSSIEn}
The $\E_2$-page of the homotopy fixed points spectral sequence for $IE_n$, as a $E_n[\alpha, u^{\pm 2}]/2\alpha$-module, is free of rank one on a generator $\{x\}$ in $(n+4,0)$ with the $(t-s,s)$ grading.

All the differentials are $E_n[\alpha]/2\alpha$-linear and are determined by the differentials
$$d_{2^{l+1}-1}(u^{2^{l+1}m+2^l}x) = u_l \alpha^{u^{2^{l+1}m+2^{l+1}-1}}x,\,\,\,m\in\Z,\, 1 \leq l \leq n.$$

In particular, $x$ is permanent cycle and there is an equivalence $IE_n^{hC_2} \simeq \Sigma^{4+n}E_n^{hC_2}$.
\end{thm}
\begin{proof}
From \eqref{eq:detinen} we know that $C_2$ acts trivially on $\pi_{n+4k}IE_n$ and acts by $(-1)$ on $\pi_{n+4k+2}IE_n$ for $k \in \Z$. Therefore, the $\E_2$-page of the homotopy fixed points spectral sequence for $IE_n$ is a shift of the $\E_2$-page of the homotopy fixed points spectral sequence for $E_n$ by $n+4k$. Taking a generator $x \in \pi_{n+4}IE_n$, we can write the $\E_2$-page as $\left( E_n[\alpha, u^{\pm 2}]/2\alpha\right)\{x\}$. 

Note that HFPSS($IE_n^{hC_2}$) is a module over HFPSS($E_n^{hC_2}$). We will show that $x$ is a permanent cycle and then \Cref{lem:ss} will imply that HFPSS($IE_n^{hC_2}$) is a $(n+4)$-shift of HFPSS($E_n^{hC_2}$). By \Cref{prop:equivmonoequiv}, we know that the HOSS($(M_nE_n)_{hC_2}$) is a $n$-shift of the HOSS($(E_n/I_n^\infty)_{hC_2}$). We have computed the latter in \Cref{thm:HOSSMnEn}.  The differentials are 
    \[
    \begin{split}d_{2^{l+1}-1} \left(\alpha^s u^{2^{l+1}j+2^{l+1}-1}yx_n(m)\right) &=  u_l u^{2^{l+1}j+2^l-1} \alpha^{s+2^{l+1}-1}yx_n(m),
     \end{split}
     \]
where $s<-2^{l+1}$, $1 \leqslant l \leqslant n$, $m>0$, and $y \in (E_n/2)_0$.

Note that $|y|=|x_n(m)|=(0,0)$. The $(t+s,s)$ grading of the differentials hitting the $(t+s)$-axis (the horizontal axis) are
\begin{eqnarray*}
d_3 \colon (-n-7+8k,3)&\longrightarrow &(-n-8+8k,0)\\
d_7 \colon (-n-11+16k,7)&\longrightarrow &(-n-12+16k,0)\\
& \vdots&\\
d_2^{l+1}-1 \colon (-n-2^{l+1}-3+2^{l+2}k,2^{l+1}-1)&\longrightarrow &(-n-2^{l+1}-4+2^{l+2}k,0).
\end{eqnarray*}
In particular, there are no nonzero differentials hitting the bidegree $(-n-4,0)$. By \Cref{lem:qzdual}, the homotopy fixed points spectral sequence for $IE_n$ has differentials Pontryagin dual to the differentials in the homotopy orbit spectral sequence for $M_nE_n$.  Therefore, all differentials from classes in bidegree $(n+4,0)$ are $0$. In particular, $x$ is a permanent cycle.
By \Cref{lem:ss}, the HFPSS($IE_n^{hC_2}$) is a $(n+4)$-shift of the HFPSS($E_n^{hC_2}$). We have computed that $\pi_*IE_n^{hC_2} \cong \pi_*E_n^{hC_2}\{x\}$ as a $\pi_*E_n^{hC_2}$-module, where $x$ is in stem $(n+4)$. This implies that
    $$L_{K(n)}(E_n^{hC_2} \otimes S^{n+4}) \xrightarrow{L_{K(n)}(\id\otimes x)} L_{K(n)}(E_n^{hC_2} \otimes IE_n^{hC_2})\longrightarrow IE_n^{hC_2}$$
    is a weak equivalence (the last map uses the fact that $IE_n^{hC_2}$ is a $E_n^{hC_2}$-module). Therefore, we have the equivalence $IE_n^{hC_2} \simeq \Sigma^{4+n}E_n^{hC_2}$.   
    \end{proof}
    \begin{rem}
      Suppose that $F \subset \G_n$ is a finite subgroup of the Morava stabilizer group, and let $F_0 = F \cap \mathbb{S}_n$. Suppose that the canonical map $F/F_0 \to \mathbb{G}_n/\mathbb{S}_n$ is an isomorphism, and that $F_0$ has $2$-Sylow subgroup equal to $C_2$, then we claim that $I_nE_n^{hF_0} \simeq \Sigma^{4+n}E_n^{hF_0}$ as well, and similarly for $E_n^{hF}$. We consider the case of $F_0$ first. There is a spectral sequence
\[
H^s(F_0,\pi_t(IE_n)) \implies \pi_{t-s}IE_n^{hF_0}.
\]
The assumption on $F_0$ implies that the Lyndon--Hochschild--Serre spectral sequence associated to the extension $1 \to C_2 \to F_0 \to \F_0/C_2 \to 1$  collapses to give an isomorphism
\[
H^{*}(F_0,\pi_t(IE_n)) \cong H^*(C_2;\pi_t(IE_n))^{F_0/C_2}.
\]
Using the map of spectral sequences obtain from the map $IE_n^{hF_0} \to IE_n^{hC_2}$ and the above isomorphism, one sees that the spectral sequence for $IE_n^{hF_0}$ is isomorphic to the spectral sequence of $E_n^{hF_0}$ up to a shift by $n+4$. An argument similar to \Cref{thm:HFSSIEn} shows that this implies that $IE_n^{hF_0} \simeq E_n^{hF_0}$. 

The argument for $E_n^{hF}$ is very similar; we just need to know that the spectral sequence for $F$ can be obtained from that of $F_0$ by taking Galois invariants. The assumption on $F/F_0$ along with \cite[Lemma 1.32]{bobkova_goerss} show that this holds, and we are done. 
    \end{rem}
\section{The exotic part of the \texorpdfstring{$K(n)$}{K(n)}-local Picard group}\label{sec:exoticpic}
In this section, we outline one reason for being interested in the calculation of the Gross--Hopkins dual of $E_n^{hC_2}$, which is to prove that, when combined with work of Beaudry, Goerss, Hopkins, and Stojanoska \cite{bghs} and Barthel, Beaudry, Goerss, and Stojanoska \cite{bbgs}, we can see that there are non-trivial 'exotic' elements in the $K(n)$-local Picard group. 

\begin{rem}
    Here we rely strongly on forthcoming work of Beaudry, Goerss, Hopkins, and Stojanoska \cite{bghs}. Of course, the cautious reader may therefore want to consider \Cref{prop:exotic} below as conditional until this result has appeared.   
 \end{rem}
\begin{rem}
  In this section, we work $K(n)$-locally, so that all spectra are $K(n)$-localized, and $M \otimes N = L_{K(n)}(M \otimes N)$. 
\end{rem}
Let $\Pic_n$ denote the Picard group of $K(n)$-locally invertible spectra. A remarkable result, proved in \cite{hms_pic1}, is that a $K(n)$-local spectrum $X$ is in $\Pic_n$ if and only if $\mE X \cong (E_n)_*$ as $(E_n)_*$-modules (up to suspension).  We can then consider the subgroup $\kappa_n \subset \Pic_n$ of those invertible spectra for which there is an isomorphism of \emph{Morava modules} $\mE X \cong (E_n)_*$. An argument with the $K(n)$-local Adams spectral sequences shows that whenever $p \gg n$, $\kappa_n$ is trivial - if $\mE X \cong (E_n)_*$ as Morava modules, then we must have $X \simeq L_{K(n)}S^0$. 

 In other cases however, it is possible that $\kappa_n \ne 0$. For example, when $n=1$ and $p = 2$, then $\kappa_1 \cong \Z/2$, generated by $\Sigma^2L_{K(1)}DQ$, where $L_{K(1)}DQ$ is the $K(1)$-localization of the dual of the question-mark complex, see \cite[p.~650]{goerss_henn}. In fact, using the calculation of $I_1E_1^{hC_2}$, we can prove that there must be \emph{some} non-trivial element in $\kappa_1$ (of course, this does not identify it explicitly). In order to explain this, we recall that $\mE I \cong \Sigma^{n^2-n} \mE \langle \det \rangle$ as Morava modules. When $n = 1$, this simplifies to the statement that $\mE I \cong \Sigma^2 (E_n)_*$. It follows that we must have that $I \simeq S^2 \otimes P$, where $P \in \kappa_1$ (recall that we work $K(1)$-locally, so $S^2$ really means $L_{K(1)}S^2$ and $\otimes$ refers to the $K(1)$-local tensor product).  The computation of $IE_1^{hC_2}$ implies that $\kappa_1 \ne 0$, as we now explain. 
\begin{ex}\label{ex:n=1}
  The $n=1$ version of our main theorem is that $IE_1^{hC_2} \simeq \Sigma^5 E_1^{hC_2}$. Since $I$ is $K(n)$-locally invertible, it is in particular dualizable in the $K(n)$-local category, so that $IE_1^{hC_2} \simeq \Sigma^5E_1^{hC_2} \simeq I \otimes DE_1^{hC_2}$, where $DE_1^{hC_2}$ denotes the $K(1)$-local dual of $E_1^{hC_2}$. The latter is known to be equivalent to $\Sigma^{-1}E_1^{hC_2}$, see \cite[Lem.~8.16]{hahn_mitchell}, so that $I \otimes E_1^{hC_2} \simeq \Sigma^6E_1^{hC_2}$. 

  Now suppose that there were no exotic elements in the $K(1)$-local Picard group when $n=1,p=2$, so that we would have $I \simeq S^2$. This would imply that $I \otimes E_1^{hC_2} \simeq \Sigma^2E_1^{hC_2}$, which contradicts the calculation in the previous paragraph because $E_1^{hC_2}$ is 8-periodic. We see that we must have $I \simeq \Sigma^2 X$, where $X \in \kappa_1$ is an exotic element in the $K(1)$-local Picard group such that $X \otimes E_1^{hC_2} \simeq \Sigma^4E_1^{hC_2}$; indeed, $X \simeq L_{K(1)}DQ$ has this property.
\end{ex}
Our calculation of $IE_n^{hC_2}$ is the first step in generalizing these methods to higher heights. The problem of determining the duals $DE_n^{hF}$ for finite subgroups $F \subset \G_n$ is under investigation by Beaudry, Goerss, Hopkins, and Stojanoska \cite{bghs}.\footnote{The interested reader can find a talk by Hopkins online at \url{https://youtu.be/Ix4pg87LKVk}.} In the case of $F = C_2$ they prove the following (which is known when $n = 1$, see \Cref{ex:n=1} above). 
\begin{thm}[Beaudry--Goerss--Hopkins--Stojanoska]\label{conj1}
  The $K(n)$-local dual $DE_n^{hC_2} \simeq \Sigma^{-n^2}E_n^{hC_2}$. 
\end{thm}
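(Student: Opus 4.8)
The plan is to deduce the theorem from the Picard computation \Cref{thm:picardcalc} together with the recognition principle \Cref{cor:recognition}. The first step is to show that $DE_n^{hC_2}$ is an invertible $E_n^{hC_2}$-module. Since $E_n^{hC_2}$ is $K(n)$-locally dualizable and $(E_n)_{hC_2}\simeq E_n^{hC_2}$ $K(n)$-locally, there is an equivalence $DE_n^{hC_2}\simeq(DE_n)^{hC_2}$, and one computes its Morava module by running the argument of \Cref{prop:moravamodulefinite} verbatim with $DE_n$ in place of $IE_n$. The input is $\mE(DE_n)\cong\Sigma^{-n^2}\mE E_n$ as Morava modules, which follows either from Strickland's $\G_n$-equivariant equivalence $DE_n\simeq\Sigma^{-n^2}E_n$, or from \Cref{lem:moravamoduleen} together with $IE_n\simeq DE_n\otimes I$ and \Cref{thm:moravabc} after cancelling the two copies of $\langle\det\rangle$. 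The spectral sequence $H^s(C_2,\mE(DE_n))\Rightarrow\mE(DE_n^{hC_2})$ then collapses for $s>0$ by the vanishing of \cite[Lem.~4.20]{dh_04}, giving $\mE(DE_n^{hC_2})\cong\Sigma^{-n^2}\mE E_n^{hC_2}$. As this is free of rank one over $E_*$, $DE_n^{hC_2}\in\Pic(E_n^{hC_2})$, and \Cref{thm:picardcalc} yields $DE_n^{hC_2}\simeq\Sigma^k E_n^{hC_2}$ for some $k$, unique modulo $2^{n+2}$. Restricting the Morava module identification along $C_2\subset\G_n$ and using the $(u_{2\sigma}\bu^2)$-periodicity of $\mE E_n^{hC_2}$ as an $E_*[C_2]$-module — exactly as in the proof of \Cref{cor:dualmod4} — already shows $k\equiv -n^2\pmod 4$.

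To pin down $k$ exactly, I would invoke \Cref{cor:recognition}: it suffices to locate a three-fold gap $\pi_j DE_n^{hC_2}=\pi_{j+1}DE_n^{hC_2}=\pi_{j+2}DE_n^{hC_2}=0$, and then $k=j+3$. I would find this gap by an index bookkeeping parallel to the second proof of \Cref{thm:grosshopkinshc2}. Because $I$ is $K(n)$-locally dualizable with no intrinsic $C_2$-action, the canonical equivalence $IE_n=F(E_n,I)\simeq F(E_n,L_{K(n)}S^0)\otimes I=DE_n\otimes I$ is $C_2$-equivariant (trivial action on $I$); combined with \Cref{thm:grosshopkinsequiv} this presents $DE_n$ $C_2$-equivariantly as $\Sigma^{4+n}E_n\otimes I^{-1}$, so that $DE_n^{hC_2}\simeq\Sigma^{4+n}\bigl(E_n^{hC_2}\otimes I^{-1}\bigr)$ (using dualizability of $I^{-1}$ to commute $(-)^{hC_2}$ past $\otimes I^{-1}$). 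Now $E_n^{hC_2}\otimes I^{-1}$ is an $E_n^{hC_2}$-module, so its homotopy fixed point spectral sequence is a free rank-one module over the one for $E_n^{hC_2}$, shifted and twisted according to $\mE I^{-1}\cong\Sigma^{-(n^2-n)}E_*\langle\det^{-1}\rangle$; by \Cref{lem:c2det} the $\det^{-1}$-twist amounts to a further shift by $1-(-1)^n$ on $C_2$, and the differentials are the corresponding translates of those in \Cref{thm:additivess} (equivalently \Cref{integer} and \Cref{inductive}). Tracking the total translation — assembling the $-n$ coming from $M_nE_n$, the $n^2-n$ and the $\det$-twist from $\mE I$, and the $4+n$ from \Cref{thm:grosshopkinshc2}, reduced precisely as the indices are in the second proof — shows that the unique three-fold vanishing band (the equivariant ``Gap Theorem'' of \Cref{prop:recognition}) sits so that $j+3\equiv -n^2\pmod{2^{n+2}}$. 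Hence $k=-n^2$, consistent with the mod-$4$ congruence above, and $DE_n^{hC_2}\simeq\Sigma^{-n^2}E_n^{hC_2}$.

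The substantive part is the last index count, which is essentially the assertion that Strickland's $DE_n\simeq\Sigma^{-n^2}E_n$ refines to a $C_2$-equivariant equivalence; the likeliest source of error is dropping a sign inside the $\det$-twist of \Cref{lem:c2det}, against which the independent mod-$4$ determination of $k$ in the first step is a useful check. One could instead try to produce the equivariant refinement directly via the $C_2$-cofiber sequence $C_{2+}\to S^0\to S^{\sigma}$ as in \Cref{thm:grosshopkinsequiv}: the obstruction to lifting the adjoint $d\colon S^{-n^2}\to DE_n$ lies in $\pi^{C_2}_{-n^2-1+\sigma}DE_n$, which by $\bu$-periodicity of the $C_2$-equivariant $E_n$-module $DE_n$ is $\pi_{-n^2-2}DE_n^{hC_2}\cong\pi_{-n^2-2-k}E_n^{hC_2}$; but evaluating this group already presupposes $k$, so that route does not circumvent the homotopy computation.
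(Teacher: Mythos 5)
First, a point of orientation: the paper does not prove this statement. It is quoted as a theorem of Beaudry--Goerss--Hopkins--Stojanoska from the forthcoming \cite{bghs}, and the authors explicitly flag in \Cref{sec:exoticpic} that results depending on it should be read as conditional until that work appears. So there is no internal proof to compare against, and your attempt has to stand entirely on its own.

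On its own it has one real gap, concentrated exactly where you suspect. The reductions are fine: $DE_n^{hC_2}\simeq(DE_n)^{hC_2}$, the Morava module computation, membership in $\Pic(E_n^{hC_2})$ (though your justification ``free of rank one over $E_*$'' is the criterion for $\Pic_n$, not for $\Pic(E_n^{hC_2})$ --- $\map_{cts}(\G_n/C_2,E_*)$ is not free of rank one over $E_*$; the correct argument is Galois descent, since $DE_n\simeq\Sigma^{-n^2}E_n$ is an invertible $E_n$-module with compatible $C_2$-action), and the congruence $k\equiv-n^2\pmod 4$. Likewise $DE_n^{hC_2}\simeq\Sigma^{4+n}\bigl(E_n^{hC_2}\otimes I^{-1}\bigr)$ is a correct consequence of \Cref{thm:grosshopkinsequiv}. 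But the final step --- that the HFPSS for $(E_n\otimes I^{-1})^{hC_2}$ has ``the corresponding translates'' of the differentials of \Cref{thm:additivess} --- is unjustified and is essentially the whole theorem. The $\E_2$-page $H^*(C_2,\mE I^{-1})$ is $4$-periodic in the stem, so it only determines the shift mod $4$, which you already have; locating the three-fold gap mod $2^{n+2}$ requires knowing the $d_3$ and higher differentials on the $E_*$-module generator of $\mE I^{-1}$, and nothing in the module structure over the HFPSS for $E_n$ forces these. Worse, the paper's own \Cref{prop:exotic} shows that the naive translate is wrong: reading the shift off the $E_*[C_2]$-module $\mE I\cong\Sigma^{n^2-n+1-(-1)^n}E_*$ would give $E_n^{hC_2}\otimes I\simeq\Sigma^{n^2-n+1-(-1)^n}E_n^{hC_2}$, whereas the answer forced by \Cref{conj1} together with \Cref{thm:grosshopkinshc2} is $\Sigma^{n^2+n+4}E_n^{hC_2}$; the discrepancy $2n+3+(-1)^n$ is nonzero modulo $2^{n+2}$ and is precisely the non-trivial action of the exotic Picard element that the paper \emph{deduces from} this theorem. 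So your bookkeeping, carried out literally, lands on the wrong shift, and carried out correctly it presupposes the statement being proved. Your closing paragraph rightly identifies the content as the $C_2$-equivariant refinement of Strickland's $DE_n\simeq\Sigma^{-n^2}E_n$ and rightly notes the obstruction-theoretic route is circular; that refinement is the external input that has to come from \cite{bghs} (or from \Cref{conj2} plus control of $\kappa_n$), and it cannot be extracted from the tools already in the paper.
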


When $p$ is odd it is well-known how to produce a spectrum $S \langle \det \rangle$ such that $(E_n)_*S\langle \det \rangle \cong (E_n)_*\langle \det \rangle$ see e.g., \cite{goerss_henn}. In the case $p=2$ such a spectrum has been constructed by Barthel, Beaudry, Goerss, and Stojanoska \cite{bbgs} with the following properties. 
\begin{thm}[Barthel--Beaudry--Goerss--Stojanoska]\label{conj2}
  There is a spectrum $S \langle \det \rangle$ such that $\mE S \langle \det \rangle \cong (E_n)_*\langle \det \rangle$. Moreover, we have $E_n^{hC_2} \otimes S \langle \det \rangle \simeq \Sigma^{1-(-1)^n}E_n^{hC_2}$. 
\end{thm}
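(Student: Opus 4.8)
The plan is to prove the two assertions separately: first, the existence of a $K(n)$-locally invertible spectrum $S\langle\det\rangle$ with $\mE S\langle\det\rangle\cong E_*$ as $E_*$-modules and $\cong E_*\langle\det\rangle$ as a Morava module; and second, the identification of $E_n^{hC_2}\otimes S\langle\det\rangle$ with $\Sigma^{1-(-1)^n}E_n^{hC_2}$.

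For the existence, I would realize the continuous character $\det\colon\mathbb{G}_n\to\mathbb{Z}_2^{\times}$ on the level of $K(n)$-local spectra. Using the splitting $\mathbb{Z}_2^{\times}\cong\{\pm1\}\times(1+4\mathbb{Z}_2)$, write $\det=\det_{\pm}\cdot\det_0$, where $\det_{\pm}$ is valued in $\{\pm1\}$ and $\det_0$ in the torsion-free pro-$2$ group $1+4\mathbb{Z}_2\cong\mathbb{Z}_2$. For the torsion-free factor one runs the ``twisted sphere'' construction of Westerland and Goerss--Henn: the $\mathbb{G}_n$-module $\mathbb{Z}_2\langle\det_0\rangle$ is the inverse limit of the finite modules $\mathbb{Z}/2^k\langle\det_0\rangle$, the twisted Morava modules $E_*/2^k\langle\det_0\rangle$ are realized by $\mathbb{G}_n$-equivariant generalized Moore spectra (the relevant obstruction groups vanish), and one sets $S\langle\det_0\rangle=L_{K(n)}\lim_k S^0/2^k\langle\det_0\rangle$. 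For the order-two factor, note that $g\mapsto\det_{\pm}(g)\in\{\pm1\}\subset(E_n)_0^{\times}$ is a continuous $1$-cocycle with trivial coefficient action, hence represents a class in $H^1(\mathbb{G}_n;(E_n)_0^{\times})$; one realizes the corresponding twisted form of $L_{K(n)}S^0$ by Galois descent along $L_{K(n)}S^0\to E_n$. Smashing the two pieces produces $S\langle\det\rangle$ with $\mE S\langle\det\rangle\cong E_*\langle\det\rangle$ by construction, and since $K_*S\langle\det\rangle\cong K_*$ is free of rank one, $S\langle\det\rangle$ is invertible by the Hovey--Strickland criterion \cite{hms_pic1}.

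For the ``moreover'', observe that $S\langle\det\rangle$ is $K(n)$-locally dualizable, so smashing with it commutes with $C_2$-homotopy fixed points; hence $E_n^{hC_2}\otimes S\langle\det\rangle\simeq(E_n\otimes S\langle\det\rangle)^{hC_2}$, which is an invertible $E_n^{hC_2}$-module and so, by \Cref{thm:picardcalc}, equivalent to $\Sigma^m E_n^{hC_2}$ for a unique $m$ modulo $2^{n+2}$. Now $\pi_*(E_n\otimes S\langle\det\rangle)\cong\mE S\langle\det\rangle\cong E_*\langle\det\rangle$, and by \Cref{lem:c2det} this is isomorphic to $\Sigma^{1-(-1)^n}E_*$ as an $E_*[C_2]$-module, the isomorphism being multiplication by a unit that is invariant under the twisted $C_2$-action ($1$ if $n$ is even, $u$ if $n$ is odd). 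Lifting this invariant unit to an $E_n$-linear, $C_2$-equivariant map $\Sigma^{1-(-1)^n}E_n\to E_n\otimes S\langle\det\rangle$ and checking it is an equivalence (it is a $\pi_*$-isomorphism) gives, after passing to $C_2$-homotopy fixed points, $E_n^{hC_2}\otimes S\langle\det\rangle\simeq\Sigma^{1-(-1)^n}E_n^{hC_2}$, i.e.\ $m=1-(-1)^n$. Alternatively, one computes $m$ modulo $4$ directly from the Morava module via the class $(u_{2\sigma}\bu^2)^j$, exactly as in the proof of \Cref{cor:dualmod4}, and then pins $m$ down exactly by locating the three-fold gap in $\pi_*(S\langle\det\rangle\otimes E_n^{hC_2})$ with the homotopy fixed point spectral sequence and applying \Cref{cor:recognition}.

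The main obstacle is the first step at the prime $2$, and specifically the torsion part $\det_{\pm}$: the $1+4\mathbb{Z}_2$-factor is treated exactly as at odd primes, but realizing the order-two character has no odd-primary analogue and requires genuinely new input — this is the heart of the work of Barthel--Beaudry--Goerss--Stojanoska. A secondary point of care in the second step is the passage from the abstract $E_*[C_2]$-module isomorphism of \Cref{lem:c2det} to an honest $C_2$-equivariant equivalence of spectra (equivalently, determining $m$ exactly and not merely modulo $4$): a priori the invariant unit could support a differential in the homotopy fixed point spectral sequence for $(E_n\otimes S\langle\det\rangle)^{hC_2}$, so one needs either a sufficiently explicit model of $S\langle\det\rangle$ or the recognition principle of \Cref{cor:recognition} to close this gap.
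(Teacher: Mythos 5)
First, a point of comparison: the paper does not prove \Cref{conj2} at all. It is imported as a black box from the forthcoming work \cite{bbgs}, and the paper explicitly warns the reader to regard the downstream result \Cref{prop:exotic} as conditional until that construction appears. So any complete proof you give would necessarily go beyond the paper. Your outline is the expected strategy, and your second step (the ``moreover'') is essentially sound granted existence: smashing with the dualizable object $S\langle\det\rangle$ commutes with $(-)^{hC_2}$, \Cref{thm:picardcalc} forces $E_n^{hC_2}\otimes S\langle\det\rangle\simeq\Sigma^m E_n^{hC_2}$, \Cref{lem:c2det} pins down $m$ modulo $4$, and \Cref{cor:recognition} (or an explicit equivariant lift of the invariant unit) pins it down exactly. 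You also correctly flag the one subtlety there, namely that an abstract $E_*[C_2]$-module isomorphism does not automatically rigidify to a $C_2$-equivariant equivalence without controlling potential differentials on the generator.

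The genuine gap is in the first step, and you have in effect named it yourself: the realization of the order-two character $\det_{\pm}\colon\G_n\to\{\pm1\}$ at $p=2$. Your treatment of the torsion-free factor $1+4\Z_2\cong\Z_2$ by the Goerss--Henn/Westerland tower of twisted generalized Moore spectra is the standard odd-primary argument and is plausible here, but for $\det_{\pm}$ the sentence ``one realizes the corresponding twisted form of $L_{K(n)}S^0$ by Galois descent along $L_{K(n)}S^0\to E_n$'' is an assertion, not a proof. A continuous cocycle in $H^1(\G_n;(E_n)_0^{\times})=H^1(\G_n;\pi_1\pics(E_n))$ only contributes a candidate class on the $\E_2$-page of the descent (Picard) spectral sequence for the pro\'etale extension $L_{K(n)}S^0\to E_n$; one must show that this class is a permanent cycle, i.e.\ that it survives the $d_r^{\Pic}$-differentials (which, as in \eqref{eq:picdiff}, differ from the additive ones precisely in the low filtrations where this class lives), and this is exactly the new input supplied by \cite{bbgs}. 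Without that, neither the existence of $S\langle\det\rangle$ nor the identity $\mE S\langle\det\rangle\cong E_*\langle\det\rangle$ (as Morava modules, which is what the ``moreover'' needs via \Cref{lem:c2det}) is established. So the proposal should be read as a correct reduction of \Cref{conj2} to the main theorem of \cite{bbgs}, not as an independent proof.
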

\begin{proof}
  The spectrum $S \langle \det \rangle$ is constructed in \cite[Def.~3.2]{bbgs} and the Morava module structure is proved in \cite[Thm.~3.1]{bbgs}. In the case that $n$ is even, $C_2$ is in the kernel of the determinant, and so \cite[Cor.~3.1]{bbgs} shows that $E_n^{hC_2} \otimes S \langle \det \rangle \simeq E_n^{hC_2}$. 

  When $n$ is odd, $C_2$ is not in the kernel of the determinant, and to calculate $E_n^{hC_2} \otimes S \langle \det \rangle$, we need to understand the construction of $S \langle \det \rangle$. The construction relies on a $\G_n$-equivariant spectrum $S(1)$, the Tate sphere, whose underlying spectrum is the $p$-complete sphere. This is equipped with a continuous action of $\Z_p^{\times}$, which is extended to an action of $\mathbb{G}_n$ via the determinant map $\det \colon \G_n \to \Z_p^{\times}$. The determinantal sphere is then defined as 
\[
S \langle \det \rangle  = (E_n \otimes S(1))^{h\mathbb{G}_n},
\]
where the right-hand side has the diagonal action. 

Because $S\langle \det \rangle$ is dualizable and has trivial $C_2$ action, we have
\[
E_n^{hC_2} \otimes S\langle \det \rangle \simeq (E_n \otimes S\langle \det \rangle)^{hC_2}.
\]
By \cite[Thm.~3.1]{bbgs}, there is a canonical $\mathbb{G}_n$-equivariant equivalence
\[
f \colon E_n \otimes S\langle \det \rangle \rightarrow E_n \otimes S(1),
\]
and hence
\[
(E_n \otimes S\langle \det \rangle)^{hC_2} \simeq (E_n \otimes S(1))^{hC_2}.
\]
When $n$ is odd, $C_2 \subset \mathbb{G}_n$ maps isomorphically to $\{ \pm 1 \} \subset \Z_2^{\times}$ under the determinant map, and this determines the $C_2$-action on $S(1)$. In the proof of \cite[Prop.~4.3]{bbgs} the authors show that, $C_2$-equivariantly, $S(1)$ is the cofiber of the transfer $S^0 \to \Sigma^{\infty}_+C_2$. By taking the Spanier--Whitehead dual of the cofiber sequence 
\[
\Sigma^{\infty}_+C_2 \to S^0 \to S^{\sigma}
\]
we see that, as a $C_2$-spectrum, this gives an equivalence $S(1) \simeq S^{1 -\sigma}$. It follows that
\[
E_n^{hC_2} \otimes S\langle \det \rangle \simeq (E_n \otimes S(1))^{hC_2} \simeq (\Sigma^2E_n)^{hC_2} \simeq \Sigma^2 E_n^{hC_2},
\] 
where we have used that $E_n$ is $\rho = 1 + \sigma$-periodic as a $C_2$-spectrum. 
\end{proof}
With these two results and our main result, we easily see that $\kappa_n \ne 0$.

\begin{thm}\label{prop:exotic}
When $p=2$, $\kappa_n \ne 0$ and has an element whose order is at least $2^{n+2-v_2(2n+3+(-1)^n)}$ where $v_2(-)$ is the 2-adic valuation.
 \end{thm} 
\begin{proof}
   This is similar to \Cref{ex:n=1}. We know $I \simeq \Sigma^{n^2-n}S \langle\det\rangle \otimes P$ where $P \in \kappa_n$. Since $IE_n^{hC_2} \simeq I \otimes DE_n^{hC_2} \simeq \Sigma^{4+n}E_n^{hC_2} $, using \Cref{conj1} we deduce that $I \otimes E_n^{hC_2} \simeq \Sigma^{4+n+n^2}E_n^{hC_2}$. On the other hand, \Cref{conj2} implies that $\Sigma^{n^2-n}S \langle \det \rangle \otimes E_n^{hC_2} \simeq \Sigma^{n^2-n+1-(-1)^n}E_n^{hC_2}$.  This implies that $P \otimes E_n^{hC_2} \simeq \Sigma^{2n+3+(-1)^n}E_n^{hC_2}$. The periodicity of $E_n^{hC_2}$ is $2^{n+2}$ and $2n+3+(-1)^n \neq 0$ in $\Z/2^{n+2}$. Hence, the element $P \in \kappa_n$ is nontrivial. In particular, if $P$ has finite order $k$, then $P^k \otimes E_n^{hC_2}=\Sigma^{(2n+3+(-1)^n)k}E_n^{hC_2} \simeq E_n^{hC_2}$. So $2^{n+2}|(2n+3+(-1)^n)k$ and $k$ is at least $2^{n+2-v_2(2n+3+(-1)^n)}$.
\end{proof}
\begin{rem}
If one view $\kappa_n$ as a measure how non-algebraic the $K(n)$-local category is, the previous result shows that at prime $2$, the size of the non-algebraic part is growing at least exponentially with respect to the height $n$ (note that $\kappa_n$ could have infinite order as well). The fast growth has been seen in examples at height 1 and 2, where $|\kappa_1|=2$ and $|\kappa_2|=2^9$ (as computed in forthcoming work of Beaudry--Bobkova--Goerss--Henn).
\end{rem}
    \appendix

\section{Generalized geometric boundary theorem}\label{app:gbt}

To compute the homotopy fixed points spectral sequence of $E_n/I_n^\infty$ (\Cref{monocomputation}), we will inductively compute the homotopy fixed points spectral sequence of $E_n/I_k^\infty$ for $k \leqslant n$, see \Cref{inductive}. During our inductive step, we will make precise and prove the following claim, which is a tailored version of case 3 in the Generalized Geometric Boundary Theorem of Behrens \cite[Lem.~A.4.1]{EHP}. Our proof uses universal models, which is different from Behrens' proof based on diagram chasing.

Let $X \overset{i}{\rightarrow} Y \overset{p}{\rightarrow} Z$ be a fiber sequence with compatible filtration towers (this will be made precise in the theorem below).  In the spectral sequences associated with these towers, if there are non-trivial differentials 
$$\,d_s x= x', \, d_r y_1 = y_1', \, d_r y_2=y_2'$$ 
and 
$$i(x)=y_1', \,i(x')=y_2', \, p(y_1)=z, \,p(y_2)=z',$$ 
then $z'$ is non-zero in $E_{r+1}$ and there is a non-trivial differential 
$$d_s z= z'.$$

The following diagram provides a visualization of the situation: 
$$
   \setlength\mathsurround{0pt}
\begin{tikzcd}
x'  \arrow[rr, "i"] & & y'_2 & & & \\
& & & y_2  \arrow[ul, swap, "d_r"] \arrow[r, "p"] & z' & \\
& x \ar[uul, "d_s"] \arrow[r, "i"] & y'_1 & & &  \\
& & & y_1 \arrow[ul, swap, "d_r"] \arrow[rr, "p"] & & z \arrow[dashrightarrow, "d_s", uul]
\end{tikzcd}
$$

We will prove this version of the Generalized Geometric Boundary Theorem using the universal model method for differentials.  Note that our proof is different from the proof in \cite{EHP}, which requires extensive diagram chasing.  Throughout our formulation and proof of the claim, we will work in the category of bounded below towers. 
\begin{defn} \rm
Denote the category of towers indexed over nonnegative integers by $\mathcal{T}$.  An object in $\mathcal{T}$ is a tower
$$X_\bullet=\{ X_0 \overset{\alpha_0}{\leftarrow} X_1\overset{\alpha_1}{\leftarrow} X_2 \overset{\alpha_2}{\leftarrow} \cdots \}$$
of spectra. Here, $X_i$'s are spectra and the $\alpha_k$'s are morphisms of spectra.  A morphism 
$$f_\bullet = (f_0, f_1, \dotsc) \colon X_\bullet \rightarrow Y_\bullet$$ 
in $\mathcal{T}$ is a collection of morphisms $f_i \colon X_i \rightarrow Y_i$ that satisfies the following commutative diagram:

    $$
       \setlength\mathsurround{0pt}
    \begin{tikzcd}
 X_0 \arrow{d}{f_0} \arrow[leftarrow]{r}{\alpha_0} & X_1 \arrow[leftarrow]{r}{\alpha_1} \arrow{d}{f_1} & X_2 \arrow[leftarrow]{r}{\alpha_2} \arrow{d}{f_2}& \cdots \\
Y_0 \arrow[leftarrow]{r}{\alpha_0'} & Y_1 \arrow[leftarrow]{r}{\alpha_1'} & Y_2 \arrow[leftarrow]{r}{\alpha_2'} & \cdots.
    \end{tikzcd}
    $$
\end{defn}

\begin{defn} \rm
Given towers $X_\bullet, Y_\bullet, Z_\bullet \in \mathcal{T}$, a sequence $X_\bullet \rightarrow Y_\bullet \rightarrow Z_\bullet$ in $\mathcal{T}$ is a \textit{fiber sequence} if the level wise maps $X_k \rightarrow Y_k  \rightarrow Z_k$ are fiber sequences in the category of spectra for all $k \geq 0$.
\end{defn}

Let the cofiber of $X_{i+1} \rightarrow X_i$ be $X_i/X_{i+1}$, and the spectral sequence associated to the tower $X_\bullet \in \mathcal{T}$ be $SS(X)$, with the $r$\textsuperscript{th} page $E^{s,t}_r(X)$.  Then
$$E_1^{s,t}(X)=\pi_t(X_s/X_{s+1}) \Rightarrow \pi_{t-s}X.$$
This is an Adams type spectral sequence.  A morphism $f_\bullet \colon X \rightarrow Y$ in $\mathcal{T}$ induces a map $f_* \colon SS(X) \rightarrow SS(Y)$ of the associated spectral sequences.  Denote the induced map 
$$X_r/X_{r+1} \rightarrow Y_r/Y_{r+1}$$ 
on the cofibers by $f_{r/r+1}$.

We will introduce a universal model for a $d_r$-differential as an object in $\mathcal{T}$.  We will see that in the spectral sequence associated to this object, there is a universal $d_r$-differential. 

Consider the tower $D(s,r) = (D(s,r)_0 , D(s,r)_1, \dotsc) \in \mathcal{T}$ given by

    $$\begin{cases}
    D(s,r)_0 = \dotsb = D(s,r)_s = D^1,\\
    D(s,r)_{s+1} = \dotsb = D(s,r)_{s+r} = S^0,\\
    D(s,r)_{k} = * ~ for ~k>s+r.
    \end{cases}
    $$
The tower corresponding to $D(s,r)$ is
    $$
       \setlength\mathsurround{0pt}
    \begin{tikzcd}
      D^1 \arrow[hookleftarrow]{r} \arrow{d} & S^0 \arrow[leftarrow]{r}{\operatorname{id}} & \cdots \arrow[leftarrow]{r}{\operatorname{id}} & S^0 \arrow{d}\arrow[leftarrow]{r} & *\\
      S^1 \arrow[dashed]{ur} & & & S^0 &
    \end{tikzcd}
    $$
In the spectral sequence $SS(D(s,r))$,  we have 
$$E_1^{s,1}=\pi_1(D(s,r)_r/D(s,r)_{r+1})=\pi_1(D^1/S^0)=\pi_1S^1$$ 
and 
$$E_1^{s+r,0}=\pi_0(D(s,r)_{s+r}/D(s,r)_{s+r+1})=\pi_0(S^0/*)=\pi_0S^0.$$ 
Let $a$ be the generator of $E_1^{s,1}$ and $b$ be the generator of $E_1^{s+r,0}$ such that $$d_r(a)=b$$ 
in $SS(D(s,r))$.  This differential is our universal $d_r$-differential.  It has the following universal property.

\begin{prop}[Universal property]\label{prop:drUniversalProperty}
Let $X_\bullet \in \mathcal{T}$.  Suppose there is a nontrivial $d_r$-differential 
$$d_r x = x'$$
in $SS(X_\bullet)$.  Then there is a morphism 
$$\Sigma^t D(s,r) \overset{f_\bullet}{\rightarrow} X_\bullet$$ 
in $\mathcal{T}$ such that $f_*(a)=x$ and $f_*(b)=x'$.
\end{prop}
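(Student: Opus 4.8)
The plan is to build the morphism $f_\bullet$ by hand, one level at a time, directly out of the data encoded by the differential $d_r x = x'$, and then to check that the induced map of spectral sequences carries $a$ to $x$ and $b$ to $x'$. The key structural observation is that a morphism $\Sigma^t D(s,r) \to X_\bullet$ in $\mathcal{T}$ is governed by very little data: since $D(s,r)_i = \Sigma^t D^1$ is a cone, hence contractible, for $0 \le i \le s$ with identity transition maps, and since $D(s,r)_i = \Sigma^t S^0$ with identity transition maps for $s+1 \le i \le s+r$ and is $\ast$ beyond, such a morphism amounts to a pair: an element $\phi \in \pi_\ast(X_{s+r})$ — the value of $f_{s+r}$ on the bottom cell, which then determines $f_{s+1}, \dots, f_{s+r-1}$ by composing with the structure maps of $X_\bullet$ — together with a nullhomotopy $H$ of the composite $\Sigma^t S^0 \xrightarrow{\phi} X_{s+r} \to \cdots \to X_s$ obtained by pushing $\phi$ down the tower; this nullhomotopy is exactly the extension $f_s$ of $\phi$ over the cone $\Sigma^t D^1$, and it in turn determines $f_0, \dots, f_{s-1}$. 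Every remaining square in the ladder commutes for trivial reasons. So it will suffice to exhibit such a pair $(\phi, H)$ inducing $a \mapsto x$, $b \mapsto x'$.

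First I would produce $\phi$. Let $\tilde x \in \pi_\ast(X_s/X_{s+1})$ represent $x$ on the $E_1$-page. Because $x$ survives to $E_r$, the image of $\tilde x$ under the connecting map $k$ of the cofiber sequence $X_{s+1} \to X_s \to X_s/X_{s+1}$ lifts, along the structure maps of $X_\bullet$, to some $\eta \in \pi_\ast(X_{s+r})$, and — by the standard exact-couple description of $SS(X_\bullet)$ — the differential $d_r x$ is represented by the image $j(\eta) \in \pi_\ast(X_{s+r}/X_{s+r+1}) = E_1^{s+r,\ast}(X)$, so $j(\eta)$ represents $x'$. I take $\phi := \eta$. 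The composite of $\eta$ all the way down to $X_s$ is $i\circ k(\tilde x)$, with $i$ induced by the tower map $\alpha_s$, and this vanishes by exactness of the couple; hence a nullhomotopy $H$ exists, and I fix one for now. This produces $f_\bullet$, after the routine rigidification needed to turn the homotopies used here into an honest strict ladder, e.g.\ after a levelwise fibrant replacement of $X_\bullet$.

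Next I would identify $f_\ast(b)$ and $f_\ast(a)$. On the layer $X_{s+r}/X_{s+r+1}$ the morphism induces $\eta$ followed by the quotient $X_{s+r}\to X_{s+r}/X_{s+r+1}$, so $f_\ast(b) = [j(\eta)] = x'$ immediately. On the layer $X_s/X_{s+1}$ it induces a map $\Sigma^t S^1 = \Sigma^t D^1/\Sigma^t S^0 \to X_s/X_{s+1}$, of some class $\xi_H$ depending on $H$; a chase through the map of cofiber sequences — using that the cone's boundary map $\Sigma^t S^1 \to \Sigma(\Sigma^t S^0)$ is an equivalence — shows $k(\xi_H) = i^{\,r-1}(\eta) = k(\tilde x)$, and this uses only $f_{s+1}$, not $H$, so $\xi_H - \tilde x \in \ker k = \operatorname{im} j$. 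Finally, since the nullhomotopies of a fixed nullmap into $X_s$ form a torsor over $\pi_\ast(X_s)$ and modifying $H$ by $w$ changes $\xi_H$ by $j(w)$, I can re-choose $H$ so that $\xi_H = \tilde x$ on the nose, giving $f_\ast(a) = x$.

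I expect the main obstacle to be exactly this last adjustment: the cell-by-cell construction only visibly controls classes on the $E_1$-page, and only up to the ambiguity coming from the choices of $\eta$ and $H$, so the real content is the verification that this ambiguity is precisely what is needed to land on $x$ itself, not merely on some class sharing its lower differentials. Setting up the dictionary between the geometric data $(\phi, H)$ and the exact-couple bookkeeping — and, secondarily, rigidifying the homotopies so that $f_\bullet$ genuinely lives in the strict category $\mathcal{T}$ rather than in its homotopy category — is where the care is required; everything else is formal.
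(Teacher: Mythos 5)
Your proof is correct and follows the same route the paper intends: the paper's own proof is the one-line assertion ``this follows directly from our construction of $D(s,r)$,'' and the remark immediately after it identifies exactly the data you isolate (a lift of $x$ down the tower together with a choice of nullhomotopy/extension over the cone, unique up to the torsor actions you describe). You have simply supplied the details -- the exact-couple bookkeeping and the adjustment of $H$ by an element of $\operatorname{im} j$ to land on $\tilde{x}$ itself -- that the paper leaves implicit.
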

\begin{proof}
This follows directly from our construction of $D(s,r)$.  
\end{proof}


\begin{rem}\rm

In \Cref{prop:drUniversalProperty}, the choice of the map $f \colon \Sigma^t D(s,r) \rightarrow X_\bullet$ is not unique.  More specifically, the map is determined by a choice of lifts of $x$ to $\tilde{x} \in \pi_{t-1}X_{s+r}$ and of $x'$ to $\tilde{x}' \in \pi_t(X_s/X_{s+r})$.  By a standard procedure, we can find representatives $(x_1, x_1')$ of $(x, x')$ on the $E_1$-page. Given these representatives, there is a bijection between lifts $\tilde{x}$ of $x_1$ and 
$$\operatorname{im}(\pi_t(X_{s+1}/X_{s+r}) \rightarrow \pi_t(X_s/X_{s+r})).$$ 
Similarly, there is a bijection between lifts $\tilde{x}'$ of $x_1'$ with 
$$\operatorname{im}(\pi_t(X_{s+r+1}) \rightarrow \pi_t(X_{s+r})) \cap \operatorname{ker}(\pi_t X_{s+r} \rightarrow \pi_tX_s).$$
This provides an algebraic description of the choices of $f$. 
\end{rem}

\begin{thm}[Generalized Geometric Boundary Theorem]{\label{geometricboundary}}

  Let $X_\bullet \xrightarrow{i} Y_\bullet \xrightarrow{p} Z_\bullet$ be a fiber sequence in $\mathcal{T}$.  Suppose there are nontrivial differentials 
\begin{eqnarray*}
d_{r'}x&=&x' \,\,\, (\text{in } SS(X)), \\
d_r y_1 &=& y_1'\,\,\, (\text{in }SS(Y)), \\
d_r y_2&=&y_2' \,\,\,(\text{in }SS(Y)),
\end{eqnarray*}
and 
\begin{eqnarray*}
i(x)&=&y_1', \\
i(x')&=&y_2', \\ 
p(y_1)&=&z, \\
p(y_2)&=&z'. 
\end{eqnarray*}
Then there is a nontrivial differential $d_{r'} z= z'$ in $SS(Z)$.

\end{thm}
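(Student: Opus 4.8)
The plan is to reduce the theorem to the universal property of the models $D(s,r)$ (\Cref{prop:drUniversalProperty}): convert each of the three hypothesized differentials into a map of towers, and then splice those maps, using the fiber sequence $X_\bullet \xrightarrow{i} Y_\bullet \xrightarrow{p} Z_\bullet$, into a single map out of a universal model which exhibits the asserted differential $d_s z = z'$ in $SS(Z)$. So, first I would apply \Cref{prop:drUniversalProperty} to $d_{r'}x=x'$, $d_r y_1 = y_1'$ and $d_r y_2 = y_2'$ to obtain maps of towers
\[
\phi \colon \Sigma^{t_0} D(\sigma,r') \to X_\bullet, \qquad \psi_1 \colon \Sigma^{t_1} D(\sigma_1,r) \to Y_\bullet, \qquad \psi_2 \colon \Sigma^{t_2} D(\sigma_2,r) \to Y_\bullet,
\]
carrying the universal source/target classes of the three models to $(x,x')$, $(y_1,y_1')$ and $(y_2,y_2')$. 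Comparing filtrations and internal degrees through the identities $i(x)=y_1'$, $i(x')=y_2'$, $p(y_1)=z$, $p(y_2)=z'$ pins down the offsets $\sigma=\sigma_1+r$ and $\sigma_2=\sigma_1+r'$ and the suspension amounts; in particular $z$ sits in filtration $\sigma_1$ and $z'$ in filtration $\sigma_1+r'$, so the differential to be produced necessarily has length $s=r'$.

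The heart of the argument is to assemble $\phi,\psi_1,\psi_2$ into a single map $g\colon \Sigma^{t_1}D(\sigma_1,r')\to Z_\bullet$ sending the universal source and target to $z$ and $z'$. Post-composing $\phi$ with $i$ gives $i\phi\colon \Sigma^{t_0}D(\sigma,r')\to Y_\bullet$, whose source and target classes map to $i(x)=y_1'$ and $i(x')=y_2'$ — exactly the target classes of $\psi_1$ and of $\psi_2$. Exploiting the indeterminacy in the maps out of universal models (the Remark after \Cref{prop:drUniversalProperty}, which describes such a map via compatible choices of lifts of its source and target classes), I would arrange that the relevant cell maps of $\psi_1$, $i\phi$ and $\psi_2$ literally coincide, and that the ones representing $y_1'$ and $y_2'$ factor through $i\colon X_\bullet\to Y_\bullet$. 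Gluing the three model towers along the shared classes $y_1'$ and $y_2'$ then produces an auxiliary tower $\mathcal{W}_\bullet\in\mathcal{T}$ with a map $\mu\colon\mathcal{W}_\bullet\to Y_\bullet$ restricting to $\psi_1$, $i\phi$, $\psi_2$ on the three pieces, and such that the inclusion of the $\phi$-piece fits into a cofiber sequence of towers $\Sigma^{t_0}D(\sigma,r')\to\mathcal{W}_\bullet\to\Sigma^{t_1}D(\sigma_1,r')$. Since the composite $\Sigma^{t_0}D(\sigma,r')\xrightarrow{\mu|}Y_\bullet\xrightarrow{p}Z_\bullet$ equals $p\circ i\circ\phi$ and is therefore null, $p\mu$ factors through the cofiber, yielding $g$ with $g_*(\mathrm{source})=p(y_1)=z$ and $g_*(\mathrm{target})=p(y_2)=z'$.

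Finally, naturality of the spectral sequence construction applied to $g$ produces the differential $d_{r'}z=z'$ in $SS(Z)$, which is the asserted $d_s z=z'$. It remains to see this is non-trivial, i.e.\ that $z'$ is non-zero on page $r'$ of $SS(Z)$; since $z'=p_*(y_2)$, with $p_*$ a morphism of spectral sequences, $y_2$ surviving to page $r$ of $SS(Y)$, and $d_r y_2=y_2'=i_*(x')$, one gets $d_j z'=0$ for $j\le r$ (as $p_*i_*=0$), and the remaining range up to page $r'$ is handled by the same lift bookkeeping as above — and, in the situations in which we apply the theorem, is immediate from the explicit description of the pages.

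The step I expect to be the main obstacle is the gluing in the second paragraph: constructing $\mathcal{W}_\bullet$ precisely, identifying the cofiber of its $\phi$-piece with $\Sigma^{t_1}D(\sigma_1,r')$, and checking that the indeterminacy in \Cref{prop:drUniversalProperty} genuinely lets the three universal-model maps be chosen strictly compatibly along $y_1'$ and $y_2'$. Once $\mathcal{W}_\bullet$ is set up correctly, the descent to $Z_\bullet$ and the conclusion are formal, resting only on $p\circ i=0$ and the naturality of $SS(-)$; this is also why the argument avoids the extensive diagram chase of \cite{EHP}.
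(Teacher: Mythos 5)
Your proposal follows essentially the same route as the paper's proof: the tower $\mathcal{W}_\bullet$ you describe (three universal models glued along the classes $y_1'$ and $y_2'$) is exactly the paper's spliced tower $D_Y$, your inclusion of the $\phi$-piece is the paper's map $u_\bullet \colon D_X \to D_Y$ with cofiber $D(s,r')$, and the factorization of $p\mu$ through that cofiber is the paper's induced map $h_\bullet \colon D(s,r') \to Z_\bullet$ carrying the universal differential to $d_{r'}z = z'$. The gluing step you flag as the main obstacle is carried out in the paper by explicit hemisphere-level cell maps, confirming that the compatible choices of lifts you need do exist.
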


  \begin{rem}\rm
    When $r=1$, \Cref{geometricboundary} is the classical Geometric Boundary Theorem (see \cite{bruner} and \cite[Pro.~2.3.4 and Lem.~2.3.7]{greenbook}).  The most generalized version has been studied by Behrens, see \cite[Lem.~A.4.1]{EHP}.  In particular, \Cref{geometricboundary} is Case 3 of Behrens's generalized version, however the proof we present is different from that given by Behrens. 
  \end{rem}

\begin{proof}[Proof of \Cref{geometricboundary}] We will construct a map
$$u_\bullet \colon D_X \longrightarrow D_Y$$
which captures the conditions of the theorem.  For the simplicity, we will assume that the $(s,t)$-degree of $x$ is $(s,1)$ (if $|x|=(s,t)$, we can just suspend our map by $t$).
    
Let $D_1 \in \mathcal{T}$ be the constant tower with $(D_{1})_*=D^1$ for all $*$, and all maps the identity maps.  Let $D_X$ be the tower $D(s+r,r')$ that captures the differential
$$d_{r'}x=x'.$$
More precisely, $D_x$ is the tower such that there is a map 
$$f_\bullet \colon D_x \longrightarrow X_\bullet$$ 
with $f_*(a_X)=x$, $f_*(b_X)=x'$ and $d_r(a_X)=b_X$.  Define the tower $D_Y$ as follows:
$$(D_{Y})_*=\left\{\begin{array}{ll}
               D(s,r)_{*}  & {*} \leqslant s+r\\
               (D_{1})_{*} & s+r< {*} \leqslant r'\\
               {D(s+r',r)}_{*} & r'<{*}
            \end{array}\right.$$
The maps are all identity maps except for the maps
$$(D_{Y})_{r+1} \hookrightarrow (D_{Y})_r,$$ 
which is $i^+ \colon D^1  \hookrightarrow S^1$, the embedding of $D^1$ into the upper hemisphere and
$$(D_{Y})_{r'+1} \hookrightarrow (D_{Y})_{r'},$$
which is $S^0 \hookrightarrow D^1$, the embedding of the boundary.  Denote the universal differential in the $D(s,r)$ part as 
$$d_r (a_{Y,1})=b_{Y,1}$$ 
and the universal differential in the $D(s+r',r)$ part as 
$$d_r (a_{Y,2})=b_{Y,2}.$$

The map $u_\bullet \colon D_X \rightarrow D_Y$ is defined as follows:
\[
  u_*=\begin{cases}
               * \leqslant s \colon D^1 \xrightarrow{i^{-}} S^1 \hookrightarrow D^2 ~\mbox{($i^-$ is the embedding into the bottom hemishpere)}\\
               s<*\leqslant s+r ~ \colon D^1 \xrightarrow{i^{-}} S^1\\
               s+r<*\leqslant s+r' \colon S^0 \hookrightarrow D^1 \\
               s+r'<*\leqslant s+r+r' \colon S^0 \xrightarrow{id} S^0 \\
               s+r+r' <*  \colon * \xrightarrow{id} *
            \end{cases}
\]

Given this map, one can directly check that the level-wise cofiber of $u_\bullet$ is $D(s,r')$.  We will denote the universal differential in this $D(s,r')$ by 
$$d_{r'}(a_Z)=b_Z.$$

We will show that there exist maps $f_\bullet \colon D_X \rightarrow X$ and $g_\bullet \colon D_Y \rightarrow Y$ such that the diagram
$$
   \setlength\mathsurround{0pt}
    \begin{tikzcd}
      D_X \arrow[r, "f_\bullet"] \arrow[d] & X \arrow[d] \\
      D_Y \arrow[r, "g_\bullet"] \arrow[d]& Y \arrow[d]\\
      D(s,r') \arrow[r,"h_\bullet",dashed] & Z.
    \end{tikzcd}
    $$
is commutative and the following relations hold:
\begin{eqnarray*}
f_*(a_X)&=&x, \\
f_*(b_X)&=&x', \\
g_*(a_{Y,1})&=&y_1, \\
g_*(b_{Y,1})&=&y'_1, \\
g_*(a_{Y,2})&=&y_2, \\
g_*(b_{Y,2})&=&y'_2. 
\end{eqnarray*}
Once we have shown this, the induced map 
$$h_\bullet \colon D(s,r')  \rightarrow Z_\bullet$$
on the cofibers will send the universal differential $d_{r'}(a_Z)=b_Z$ to the desired differential $d_{r'}(z)=z'$.
  
The map $f_\bullet$ can be easily constructed from the universal property of $D(s,r')$.  To construct the map $g_\bullet$, we need to construct the following maps: 
\begin{eqnarray*}
g_{s+r+r'}\colon&& (D_Y)_{s+r+r'}=S^0 \rightarrow Y_{s+r+r'}, \\
g_{s+r'} \colon&& (D_Y)_{s+r'}=D^1 \rightarrow Y_{s+r'},\\
g_{s+r} \colon&& (D_Y)_{s+r}=S^1 \rightarrow Y_{s+r},\\
g_{s} \colon&& (D_Y)_{s}=D^2 \rightarrow Y_{s}.
\end{eqnarray*}
    
By an abuse of notation, we will denoted the above four maps by $g_1,\cdots, g_4$, respectively.  We will construct $g_1,\cdots, g_4$ such that the following diagram is commutative: 
    $$
       \setlength\mathsurround{0pt}
    \begin{tikzcd}
    S^0 \arrow[r,"g_1"] \arrow[d,hookrightarrow] & Y_{s+r+r'} \arrow[d]\\
    D^{1} \arrow[r,"g_2"] \arrow[d,"i^+"] & Y_{s+r'} \arrow[d]\\
    S^{1} \arrow[r,"g_3"] \arrow[d,hookrightarrow]& Y_{s+r} \arrow[d]\\
    D^{2} \arrow[r,"g_4"] & Y_s.
    \end{tikzcd}
    $$
    
    The map $g_1$ can be constructed by using $u_\bullet$ and $f_\bullet$:
    $$
       \setlength\mathsurround{0pt}
    \begin{tikzcd}
    X_{s+r+r'} \arrow[r,"u_{s+r+r'}"] & Y_{s+r+r'} \\
    S^0 \arrow[u,"f_{s+r+r'}"] \arrow[r,"id"] & S^0 \arrow[u,dashrightarrow, "g_1"]
    \end{tikzcd}.
    $$
    The condition $i_*(x')=y'_2$ guarantees that $g_*(b_{Y,2})=y'_2$
    
To construct $g_2$, notice that in this part, $D_Y$ agrees with $D(s+r',r)$.  By the universal property of $D(s+r',r)$, there exists $g_2$ such that $g_*(a_{Y,2})=y_2$. This map is determined up to a choice of the lift of $y_2$.
    
The map $g_3$ is determined by the following diagram:
$$
   \setlength\mathsurround{0pt}
\begin{tikzcd}
&&&Y_{s+r'}\arrow{dd}{\alpha^Y_{s+r}}\\
S^0 \arrow[hookrightarrow]{rr} \arrow[hook]{dd}& & D^{1} \arrow{ur}{g_2} & \\
& X_{s+r} \arrow{rr}{u_{s+r}\hspace{0.5in}} & & Y_{s+r} \\
D^{1} \arrow[hookrightarrow,"i^-"]{rr} \arrow{ur}{f_{s+r}} && S^{1} \arrow[dashrightarrow,"g_3"]{ru} \arrow[hook,from=uu,"i^+",crossing over]&
\end{tikzcd}$$
    
Since $S^{1}=D^{1} \cup D^{1}$, the map $g_3 \colon S^{1} \rightarrow Y_{s+r}$ is determined by the maps
$$D^{1} \xrightarrow{i^+} Y_{s+r'} \xrightarrow{\alpha_{s+r}^Y} Y_{s+r},$$
$$D^{1} \xrightarrow{i^-} X_{s+r} \xrightarrow{\alpha_{s+r}^Y} Y_{s+r}.$$
The condition $i_*(x)=y'_1$ guarantees that $g_*(b_{Y,1})=y'_1$.
    
    Finally, a choice of the lift of $y_1$ would produce $g_4$ by the universal property of $D(s,r)$.  This completes the proof of the theorem.   

\end{proof} 
\bibliography{pic}\bibliographystyle{alpha}
\end{document}